\newtheorem{lemma}{Lemma}
\newtheorem{corollary}{Corollary}
\newtheorem{theorem}{Theorem}
\newtheorem{definition}{Definition}
\newtheorem{proposition}{Proposition}
\icmltitlerunning{Provable Acceleration of Heavy Ball beyond Quadratics}
\def\reals{\mathbb{R}}
\def\A{\mathcal{A}}
\newcommand{\XX}{\mathcal{X}}
\newcommand{\ao}{\textsc{average out }}
\newcommand{\av}[1]{$#1$-\textsc{average out}}
\newcommand{\avg}[2]{$#1$-\textsc{average out} w.r.t.\text{ }$#2$}
\newcommand{\jk}[1]{{\bf \color{red}{[JW: #1]}}}
\begin{document}

\twocolumn[
\icmltitle{Provable Acceleration of Heavy Ball beyond Quadratics for a Class of Polyak-\L{}ojasiewicz Functions when the Non-Convexity is Averaged-Out}



\icmlsetsymbol{equal}{*}

\begin{icmlauthorlist}
\icmlauthor{Jun-Kun Wang}{yale}
\icmlauthor{Chi-Heng Lin}{gt}
\icmlauthor{Andre Wibisono}{yale}
\icmlauthor{Bin Hu}{uiuc}
\end{icmlauthorlist}

\icmlaffiliation{yale}{Computer Science, Yale University}
\icmlaffiliation{gt}{Electrical and Computer Engineering, Georgia Institute of Technology}
\icmlaffiliation{uiuc}{Electrical and Computer Engineering \& Coordinated Science Laboratory, University of Illinois at Urbana-Champaign}

\icmlcorrespondingauthor{Jun-Kun Wang}{jun-kun.wang@yale.edu}
\icmlcorrespondingauthor{Chi-Heng Lin}{clin354@gatech.edu}
\icmlcorrespondingauthor{Andre Wibisono}{andre.wibisono@yale.edu}
\icmlcorrespondingauthor{Bin Hu}{binhu7@illinois.edu}

\icmlkeywords{Machine Learning, ICML}

\vskip 0.3in
]



\printAffiliationsAndNotice{}  


\begin{abstract}
Heavy Ball (HB) nowadays is one of the most popular momentum methods in non-convex 
optimization. It has been widely observed that incorporating the Heavy Ball dynamic in gradient-based methods accelerates the training process of modern machine learning models. However, the progress on establishing its theoretical foundation of acceleration is apparently far behind its empirical success. 
Existing provable acceleration results are of the quadratic or close-to-quadratic functions, as the current techniques of showing HB's acceleration are limited to the case when the Hessian is fixed. In this work, we develop some new techniques that help show acceleration beyond quadratics, which is achieved by analyzing how the change of the Hessian at two consecutive time points affects the convergence speed. Based on our technical results, a class of Polyak-\L{}ojasiewicz (PL) optimization problems for which provable acceleration can be achieved via HB is identified. 
Moreover, our analysis demonstrates a benefit of adaptively setting the momentum parameter.\\
\textbf{(08/29/2023):
Erratum is added in Appendix~\ref{erratum}.
This is an updated version that fixes an issue in the previous version. 
An additional condition needs to be satisfied for
the acceleration result of HB beyond quadratics in this work, 
which naturally holds
when the dimension is one or, more broadly, when the Hessian is diagonal. 
We elaborate on the issue in Appendix~\ref{erratum}.
} 
\end{abstract}

\section{Introduction}

Heavy Ball (a.k.a. Polyak's momentum) \citep{P64}
has become a dominant momentum method in various machine learning applications,
see e.g., \citep{Rnet16,KSH12,WRSSR17}. Many recently proposed algorithms
incorporate Heavy Ball momentum in lieu of Nesterov's momentum \citep{N13}, e.g.,
Adam \citep{KB15}, AMSGrad \citep{RKK18}, and a normalized momentum method \cite{CM21}, to name just a few.
Despite its success in practice, theoretical results of acceleration via Heavy Ball are sparse in the literature. Indeed, the strongly convex quadratic problems seem to be one of very few cases for which an acceleration via HB is shown (e.g., \citet{LRP16,GFJ15,LR17,PP21,CGZ19,SP20,NB15,DJ19,SDJS18,wang2018acceleration,WCA20,LGY20,XCHZ20,JKKNS18}). Recently, \citep{WLA21} provably show that HB converges faster than vanilla gradient descent (GD) for training an over-parametrized ReLU network and a deep linear network.
They show an accelerated linear rate that has a square root dependency on the conditional number of a certain Gram matrix, which improves the linear rate of GD for those problems. They develop a modular analysis of acceleration when the HB dynamic meets certain conditions. The conditions essentially require that an underlying Gram matrix or the Hessian does not deviate too much from that at the beginning.
Therefore, their theorem might not be applied to the case when the iterate does move far away from the initial point.
This limitation to acceleration, to our knowledge, appears in all the related works of the discrete-time HB. What is lacking in the HB literature is a provable acceleration when the Hessian can deviate significantly from that at the beginning  during the optimization process.

\begin{algorithm}[t]
\begin{algorithmic}[1]
\small
\caption{Heavy Ball (Equivalent Version 1) } 
\label{alg:HB2}
\STATE Required: the step size $\eta$ and the momentum parameter $\beta_{t} \in [0,1]$
\STATE Init: $w_{0} = w_{-1} \in \reals^d $
\FOR{$t=0$ to $T$}
\STATE Given current iterate $w_t$, compute gradient $\nabla f(w_t)$.
\STATE Update iterate $w_{t+1} = w_t - \eta \nabla f(w_t) + \beta_t ( w_t - w_{t-1} )$.
\ENDFOR
\end{algorithmic}
\vspace{-0.03in}
\end{algorithm}

This work aims at filling the gap. We develop a technical result that proves acceleration of HB when the Hessian can change substantially under certain conditions. Specifically, our analysis shows that HB has acceleration compared to GD for a class of Polyak-\L{}ojasiewicz problems when the non-convexity is averaged-out (to be elaborated soon).
The PL condition \citep{P63,KNS16} is sufficient 
to show a global linear convergence rate for GD without the need of assuming strong convexity. 
The notion of PL has been discovered in various non-convex problems over the past few years, e.g., computing the Wasserstein-barycenter of Gaussian distributions \citep{ACGS21}, minimizing a Wasserstein distance using point clouds \citep{MSS21}, training over-parametrized neural networks with smooth activation functions \citep{OS19,LZB21}, and sparse optimization on a certain measure \citep{C21}.
However, as far as we are aware, momentum methods (not necessarily HB) haven't been shown to converge faster than GD under PL in discrete time. Indeed, \citet{KDP18} show a global linear convergence of the discrete-time HB under PL, but the rate is not better than that of GD, c.f., \citep{KNS16}. 
While in continuous time, some linear rates of HB under PL are established, e.g., \citep{ADR20,AGV21}, it is not clear if the rates 
do show a benefit of the use of HB compared to GD, 
and is neither clear if the result can be carried over in the discrete time. In convex optimization, it is known that Nesterov's momentum and HB share the same ODE in continuous time \citep{SDJS18}. Yet, the 
acceleration disappears when one discretizes the dynamic of HB and bounds the discretization error.
To our knowledge, provably showing any benefit of discrete-time momentum under PL is still an open problem in optimization.


In this paper, we show that the discrete-time HB has an accelerated linear rate under PL when the non-convexity is averaged-out.
To explain this notion further, we need to introduce a quantity called average Hessian.
\begin{definition} 
Let $w_* \in \reals^{d}$ be a global minimizer of a twice differentiable function $f(w): \reals^{d} \rightarrow \reals$. We define the \textbf{average Hessian} of $f(\cdot)$ at $w \in \reals^{d}$ to be:
\begin{equation} \label{avgH}%
H_f(w) :=  \int_0^1 \nabla^2 f (\theta w + (1-\theta) w_* ) d \theta.
\end{equation}
We say that \textbf{the non-convexity of $f(\cdot)$ between $w$ and $w_{*}$ is averaged-out}
with parameter $\lambda_{*}$ when the smallest eigenvalue of
the average Hessian satisfies $\lambda_{{\min}}(H_f(w)) \geq \lambda_{*}> 0$. 
For brevity, we sometimes say $f(\cdot)$ satisfies \av{\lambda_*} at $w$.
\end{definition}
From \eqref{avgH}, it should be clear that the average Hessian at $w$ is the average Hessian between the line segment that connects $w$ and a global optimal point $w_{*}$.

To get a flavor of our main result, we give an informal theoretical statement as follows.
\begin{mdframed}
\begin{theorem} (Informal version of Theorem~\ref{thm:meta})
Denote $w_{*}$ a global minimizer of $f(\cdot)$.
Suppose $f(\cdot)$ is twice differentiable, satisfies $\mu$-PL, has $L$-Lipschitz gradient and \textbf{diagonal} Hessian.
Apply HB to solve $\min_{w} f(w)$.
Assume that \av{\lambda_{\min}(H_f(w_t))} holds for all $t$.
Then, there exists a time $t_0 = \tilde{\Theta}(\frac{L}{\mu})$ such that for all $T > t_{0}$, the iterate $w_{T}$ of HB satisfies
\[
\| w_{T} - w_* \| = O\left( \prod_{t=t_0}^{T-1} \left(  1 - \Theta\left(\frac{1}{ \sqrt{\kappa_{t}}} \right)  \right) \right) \| w_{t_0} - w_* \|,
\]
where $\kappa_{t}:= \frac{L}{\lambda_{\min}(H_f(w_t)) }$ is the condition number of the average Hessian at $w_{t}$.
\end{theorem}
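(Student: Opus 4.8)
The plan is to exploit the average-Hessian identity to recast Heavy Ball as a linear recursion with a slowly-varying coefficient matrix, and then to run a two-phase argument: a burn-in phase that uses the coarse PL/GD rate to drive the iterate into a small neighborhood of $w_*$, followed by an acceleration phase in which a frozen-Hessian analysis supplies the $1 - \Theta(1/\sqrt{\kappa_t})$ per-step contraction while the Hessian-Lipschitz bound controls the error incurred by letting the Hessian move.

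First I would apply the fundamental theorem of calculus together with $\nabla f(w_*) = 0$ to obtain the exact identity $\nabla f(w_t) = H_f(w_t)(w_t - w_*)$. Writing $e_t := w_t - w_*$ and substituting into the update turns Heavy Ball into
\[
e_{t+1} = \big((1+\beta_t)I - \eta H_f(w_t)\big) e_t - \beta_t e_{t-1},
\]
which in companion form reads $z_{t+1} = A_t z_t$ with $z_t = (e_t, e_{t-1})$ and $A_t$ the $2\times 2$ block matrix built from $H_f(w_t)$ and $\beta_t$. The averaged-out assumption $\lambda_{\min}(H_f(w_t)) \ge \lambda_*$ combined with $L$-smoothness confines the spectrum of $H_f(w_t)$ to $[\lambda_*, L]$, so along each eigendirection of the \emph{frozen} matrix $H_f(w_t)$ the recursion decouples into the classical scalar Heavy Ball iteration, whose characteristic roots have modulus $\sqrt{\beta_t}$ once $\beta_t$ is set adaptively to $\big((\sqrt{\kappa_t}-1)/(\sqrt{\kappa_t}+1)\big)^2$. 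This is precisely where the adaptive momentum enters, and it yields the per-step factor $\sqrt{\beta_t} = 1 - \Theta(1/\sqrt{\kappa_t})$.

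Because $A_t$ is non-normal, its spectral radius does not directly bound $\|z_{t+1}\|$, so I would introduce a quadratic Lyapunov form $V_t(z) = z^\top P_t z$ adapted to the eigenbasis of $H_f(w_t)$ and the parameters, chosen so that $A_t^\top P_t A_t \preceq \beta_t P_t$ in the frozen-Hessian world (for complex-root blocks one can take equality, since $A_t/\sqrt{\beta_t}$ is then orthogonal in a suitable inner product). The burn-in is handled separately: using the PL inequality, Heavy Ball contracts a coarse potential at the GD rate $1 - \Omega(\mu/L)$, so after $t_0 = \tilde\Theta(L/\mu)$ steps the error $\|e_{t_0}\|$ is driven below any prescribed threshold.

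The main obstacle — and the crux of the whole argument — is that the adapted metric $P_t$ moves with $H_f(w_t)$, so the clean one-step inequality is contaminated by a mismatch term whenever $P_{t+1} \ne P_t$. I would control this via the Hessian-Lipschitz bound $\|H_f(w_{t+1}) - H_f(w_t)\| \le \tfrac{L}{2}\|e_{t+1} - e_t\|$, which shows the distortion between consecutive metrics is $1 + O(L\|e_{t+1}-e_t\|)$. Multiplying the per-step contractions against these distortions produces a product of the form $\prod_t \big(1 - \Theta(1/\sqrt{\kappa_t})\big)\big(1 + O(L\|e_{t+1}-e_t\|)\big)$, and the burn-in guarantees that the distortion factors are summable: once the iterate converges geometrically, $\sum_t \|e_{t+1}-e_t\| = O(\|e_{t_0}\|)$, so their product is a benign constant close to $1$. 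The argument is therefore a bootstrap, which I would formalize as an induction that assumes geometric decay at the claimed rate up to step $t$, uses it to bound the Hessian movement and hence the mismatch, and closes the induction to propagate the rate to step $t+1$; the smallness of $\|e_{t_0}\|$ secured during burn-in is exactly what makes the induction self-consistent and keeps the accumulated error from swamping the accelerated contraction.
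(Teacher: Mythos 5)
Your overall architecture is the same as the paper's: the fundamental-theorem-of-calculus recasting into $z_{t+1}=A_tz_t$ with the average Hessian, a per-step contraction of $\sqrt{\beta_t}=1-\Theta(1/\sqrt{\kappa_t})$ extracted from the complex-root regime of the companion blocks, a mismatch term between consecutive adapted bases controlled by Hessian Lipschitzness, and a burn-in phase in which a PL Lyapunov potential decays at the GD rate $1-\Theta(\mu/L)$ so that $\|w_{t+1}-w_t\|$ is eventually small. Your ``adapted quadratic form $P_t$ with $A_t^\top P_tA_t\preceq\beta_tP_t$'' is the metric induced by the paper's explicit spectral decomposition $A_t=P_tD_tP_t^{-1}$, and your ``distortion between consecutive metrics'' is exactly the paper's $\|P_t^{-1}P_{t-1}\|_2$ factor. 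One structural remark: the bootstrap induction you describe is unnecessary, because the coarse Lyapunov decay holds unconditionally for all $t$ (it does not presuppose acceleration), so $\|w_{t+1}-w_t\|$ decays geometrically at the GD rate outright and the mismatch factors for $t\ge t_0$ are controlled without any self-consistency argument.

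There is, however, one step that fails as written. You set $\beta_t=\bigl((\sqrt{\kappa_t}-1)/(\sqrt{\kappa_t}+1)\bigr)^2$, the classical optimal choice for quadratics. With the matching step size this places the smallest eigenvalue exactly on the boundary $\eta\lambda_{\min}(H_t)=(1-\sqrt{\beta_t})^2$ where the two characteristic roots of the corresponding $2\times2$ companion block coalesce: that block is then defective, its eigenvector matrix is singular, and the adapted metric $P_t$ in which ``$A_t/\sqrt{\beta_t}$ is orthogonal'' does not exist. More generally, the constant hidden in your distortion bound $1+O(L\|e_{t+1}-e_t\|)$ is \emph{not} universal --- it scales like $1/\bigl(\eta\lambda_{\min}(H_t)-(1-\sqrt{\beta_t})^2\bigr)$, the margin by which the spectrum stays inside the complex-root region. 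With your parameter choice this margin is zero and the product of distortions cannot be bounded. The fix is the one the paper adopts: take $\beta_t=\bigl(1-c\sqrt{\eta\lambda_{\min}(H_t)}\bigr)^2$ with $c$ strictly less than $1$, which keeps a margin $\eta\lambda_{\min}(H_t)(1-c^2)>0$, still gives $\sqrt{\beta_t}=1-\Theta(1/\sqrt{\kappa_t})$, and makes the distortion constant finite (at the price of a logarithmic dependence of $t_0$ on how close $c$ is to $1$). With that correction, and with the additional check that this $\beta_t$ is also small enough to satisfy the upper-bound constraint required for the burn-in Lyapunov decay, your argument goes through along the same lines as the paper's proof.
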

\end{mdframed}

To summarize, our contributions in this paper include:
\begin{itemize}
\item We develop an analysis of showing acceleration via HB beyond the quadratic problems, which can be of independent interest.
Our theoretical results reveal a way to check if incorporating the dynamic of momentum helps converge faster than GD under PL.
Our theoretical results will also show a benefit of adaptively setting the momentum parameter. 
\item We show that \av{\lambda_*} implies PL. We provide some concrete examples for which \av{\lambda_*} holds.
\end{itemize}

\section{Preliminaries}

\noindent
\textbf{Notations:}
We use $\beta_{t} I_{d} \in \reals^{{d \times d}}$ to denote a diagonal matrix on which each of its diagonal elements is $\beta_{t}$. We will have $\| M \|_2$ represent the spectral norm of a matrix $M$, i.e., $\|M \|_2 = \sqrt{ \lambda_{\max}( MM^*)}$, where $\lambda_{{\max}}(\cdot)$ is the largest eigenvalue of the underlying matrix, and $M^*$ is the conjugate transpose of the matrix $M$. We use $\| v \|_{2}$ as the $l_{2}$ norm of a vector $v$.
The notation $\text{Diag}( \cdots )$ in this paper represents a block-diagonal matrix that puts its arguments on the main diagonal. 

\noindent
\textbf{Definitions:}
A function $f(\cdot)$ is $L$-smooth 
if it has Lipschitz gradients, i.e., $\| \nabla f(x)- \nabla f(y) \|_{*} \leq L \| x - y\| $ for any $x,y$.
In this paper, we consider $l_{2}$ norm so that the dual norm is itself.
A function $f(\cdot)$ is $\nu$-strongly convex if $f(x) \geq f(y) + \langle \nabla f(y), x - y \rangle + \frac{\nu}{2} \| x - y \|^2$
for any $x,y$.
When $f(\cdot)$ is twice differentiable, the $\nu$-strong convexity is equivalent to the condition that the smallest eigenvalue of Hessian satisfies $\lambda_{{\min}}( \nabla^2 f(\cdot)) \geq \nu >0$.
A function $f(\cdot)$ is $\mu$-Polyak-\L{}ojasiewicz condition ($\mu$-PL) at $w$, if 
$\| \nabla f(w) \|^2 \geq 2 \mu ( f(w) - \min_w f(w) ).$
Observe that if $\mu$-PL holds for all the points in the domain of $f(\cdot)$, then 
all the stationary points are global minimizers of $f(\cdot)$. However, different from strongly convex functions, a PL-function does not necessarily have a unique global minimizer \citep{AGV21}.
An example is $f(w) = \frac{1}{2} w^{\top} A w$, where $A = A^{\top} \succeq 0$, which has $\arg\min f(w)=\mathrm{Ker}(A)$.
It is known that $\mu$-strongly convex implies $\mu$-PL \citep{KNS16}.

\subsection{Prior analysis of acceleration via HB}

Algorithm~\ref{alg:HB2} shows Heavy Ball, which has another 
equivalent version presented in the appendix (Algorithm~\ref{alg:HB1}).
 Our presentations of HB cover the case of HB with a constant momentum parameter, i.e., when $\beta_{t} = \beta$,
 which was studied in all the aforementioned works of HB.

The HB dynamic can be written as
\begin{equation} \label{eq1}
\begin{split}
w_{t+1}
& =  w_t  - \eta ( \nabla f(w_t) - \nabla f(w_*) ) + \beta_t  (w_t - w_{t-1} ) 
\\  &= w_t  - \eta \underbrace{ \left( \int_0^1 \nabla^2 f (\theta w_t + (1-\theta) w_* ) d \theta \right) }_{= H_f(w_t)} \left( w_t - w_*  \right) \\ & \quad + \beta_t ( w_t - w_{t-1} ),
\end{split}
\end{equation}
where in the first equality 
we denoted $w_{*}$ an optimal point and used $\nabla f(w_*) = 0$, and 
in the second equality, we used the fundamental theorem of calculus, i.e., $\nabla f(w_t) - \nabla f(w_*) = \left( \int_0^1 \nabla^2 f (\theta w_t + (1-\theta) w_* ) d \theta \right) \left( w_t - w_*  \right)  $.
Equation~\eqref{eq1} can be further rewritten as
\begin{equation} \label{dynamic}
\begin{split}
\begin{bmatrix}
w_{t+1} - w_* \\
w_{t} - w_* 
\end{bmatrix}
 &
=
\underbrace{
\begin{bmatrix}
I_{d} - \eta H_t + \beta_t I_{d} & - \beta_t  I_{d}   \\
I_{d} & 0_{d} 
\end{bmatrix}
}_{:= A_t}
\begin{bmatrix}
w_t - w_* \\
w_{t-1}- w_* 
\end{bmatrix},
\end{split}
\end{equation}
where we defined $H_{t}:= H_f(w_t)$ for brevity.

The most well-known acceleration result of HB is about applying HB with a constant $\beta$ for solving the strongly convex quadratic problems,
$ \min_{w \in \reals^d} \frac{1}{2} w^\top M w + b^\top w,$
where $M \in \reals^{{d \times d}}$ is a positive-definite matrix that has 
the largest eigenvalue
$\lambda_{\max}( M)= L$ and the smallest eigenvalue $\lambda_{\min}( M) =  \nu > 0$.
Let $H_{t} \gets M$ and $\beta_{t} \gets \beta$ in (\ref{dynamic})
and recursively expand the equation from time $t$ back to time $0$.
We have
\begin{equation}  \label{eq2}
\begin{bmatrix}
w_{t} - w_* \\
w_{t-1} - w_* 
\end{bmatrix}
=
A^t
\begin{bmatrix}
w_0 - w_* \\
w_{-1}- w_* 
\end{bmatrix}
,
\end{equation}
where  
$A:=\begin{bmatrix}
I_{d} - \eta M + \beta I_{d} & - \beta  I_{d}   \\
I_{d} & 0_{d}
\end{bmatrix} 
\in \reals^{2d \times 2d}$
and $A^t$ is its $t_{\mathrm{th}}$ matrix power.
To get the convergence rate from \eqref{eq2}, there are two approaches in the literature, which are described in details in the following.

\textbf{Prior approach 1 (bounding the spectral radius):} 
To upper-bound the convergence rate of the distance $\| w_t - w_* \|_2$ in \eqref{eq2}, it suffices to bound the spectral norm $\| A^t \|_{2}$.
Traditional analysis uses the spectral radius of $A$ to approximate the spectral norm $\| A^t \|_{2}$,
see e.g., \citep{P64,LRP16,R18,M19,OBP15}.
The spectral radius of a square (not necessarily symmetric)  matrix $A \in \reals^{{2d \times 2d}}$ is defined as $\rho(A) := \max_{i \in [2d]} | \lambda_i(A)|$, where $\lambda_i(\cdot)$ is the $i_{\mathrm{th}}$ (possibly complex) eigenvalue.
By choosing the step size $\eta$ and the momentum parameter $\beta$ appropriately,
the spectral radius of $A$ in \eqref{eq2} is $\rho(A)=1 - c \frac{1}{\sqrt{\kappa}}$ for some constant $c>0$, where $\kappa:= \frac{L}{\nu}$ is the condition number.
Then, by Gelfand's formula \citep{G41}, one has for any square matrix $B$,
\begin{equation} \label{gel}
\text{ (Gelfand's formula) } \quad \| B^k \|_2 = \left( \rho(B) + \epsilon_k \right)^k,
\end{equation}
for some sequence $\{ \epsilon_k\}$ that converges to $0$ when $k \rightarrow \infty$. Hence the spectral norm $\| A^t \|_2$ can be approximated by the spectral radius raised to $t$, i.e., $\left(\rho(A)\right)^{t}$, in the sense that
$\| A^t \|_2 = \left( \left(1 - c \frac{1}{\sqrt{\kappa}}\right) + \epsilon_t \right)^t$. This suggests that
asymptotically the convergence rate of HB is an accelerated linear rate $O\left(1 - \Theta\left(\frac{1}{\sqrt{\kappa}}\right)\right)$, which is better than that of GD, i.e., $O\left(1-\Theta\left(\frac{1}{\kappa}\right)\right)$.
The weakness of this approach is that the convergence rate is not quantifiable at a finite $t$, because one generally cannot control the convergence rate $\epsilon_{t}$ in the Gelfand's formula. On the other hand,
one might suggest bounding the spectral norm $\| A \|_2$, since $\| A^t \|_2 \leq \|A \|_{2}^{t}$.
In Appendix~\ref{app:HB}, we show that $1 \leq \| A \|_2 $ for any $\beta \in [0,1]$ and any step size $\eta \leq \frac{1}{L}$, which implies that a convergence rate of HB cannot be obtained via this way.

\textbf{Prior approach 2 (bounding the matrix-power-vector product):} 

Noting that controlling
the size of the matrix-power-vector product
$\left \|
A^t
\begin{bmatrix}
w_0 - w_* \\
w_{-1}- w_* 
\end{bmatrix}
\right\|_2
$ is sufficient for controlling the convergence rate of the distance in \eqref{eq2},
\citet{WLA21} show that by choosing the step size $\eta = \frac{c_{\eta}}{L}$ and the momentum parameter $\beta = \left(1 - c \frac{1}{ \sqrt{\kappa} } \right)^2$ for some constant $c_{{\eta}}$ and $c >0$ appropriately, 
one has 
$\textstyle
\left \|
A^t
\begin{bmatrix}
w_0 - w_* \\
w_{-1}- w_* 
\end{bmatrix}
\right\|_2
\leq 4 \sqrt{\kappa} \left( 1 - \frac{1}{2 \sqrt{\kappa}} \right)^t 
\| w_0 - w_* \|
,$
which is a non-asymptotic accelerated linear rate and hence avoids the concern of the first approach. 


\begin{table}[t]
\footnotesize
\begin{center}
\begin{tabular}{|c | c | c|} 
 \hline
 Class of problems & Convergence rate  & Ref.\\ 
 \hline\hline
Strongly convex quadratic  & $\left(1- \Theta \left( \frac{1}{ \sqrt{\kappa} } \right) \right)^T$ & [1] \\ \hline
$L$-smooth $\nu$-strongly convex & $\dagger$  $\left(1- \Theta \left( \frac{\nu}{ L } \right) \right)^T$  & [2]   \\ \hline
 $L$-smooth convex  & $O\left(\frac{L}{T}\right)$ & [3]  \\ \hline
$L$-smooth $\mu$-PL & $\left(1- \Theta \left( \frac{\mu}{ L } \right) \right)^T$
& [4]  \\  \hline
 Over-parametrized  NN& $\ddagger$  $\left(1- \Theta \left( \frac{1}{ \sqrt{\kappa_0} } \right) \right)^T$ & [5]  \\ \hline
\end{tabular}
\end{center}
\caption{
\footnotesize
Existing results of the discrete-time HB convergence in terms of the optimality gap $f(w_T) - \min_w f(w)$.
References: [1] Theorem 9 in \cite{P64} and Theorem 7 in \cite{WLA21} . [2] Theorem 3 in \cite{KDP18}. $\dagger$ It is also known that a local acceleration happens when the iterate is sufficiently close to the global optimal point $w_{*}$ under an additional assumption that $f(\cdot)$ is twice-differentiable, see e.g., Theorem 8 in \citep{WLA21}. [3] Theorem 2 in \citep{GFJ15} and Theorem 9 in \citep{WAL21}. 
[4] Theorem 3 in \citep{KDP18}. [5] Theorem 9-10 in \citep{WLA21}.
$\ddagger$ Here $\kappa_{0}$ is the condition number of the neural tangent kernel matrix of an over-parametrized neural network at initialization.  
 } \label{table:rate}
\end{table}
\section{Analysis}

The goal of this section is to develop an analysis that enables showing acceleration beyond the quadratic problems. But to get the ball rolling, we start by discussing some technical difficulties of reaching the goal using
the prior approaches.

\subsection{Why do prior approaches of showing acceleration fail for non-quadratic problems?}

Recall we have the HB dynamic for minimizing a general twice differentiable function shown on \eqref{dynamic}. Recursively expanding \eqref{dynamic}, one gets
\begin{equation}  \label{eq3}
\begin{bmatrix}
w_{T} - w_* \\
w_{T-1} - w_* 
\end{bmatrix}
=
\left( \prod_{t=0}^T
A_t
\right)
\begin{bmatrix}
w_0 - w_* \\
w_{-1}- w_* 
\end{bmatrix},
\end{equation}
where $\prod_{t=0}^T A_t :=  A_{T} A_{{T-1}} \cdots A_{1} A_{0}$.
Therefore, a way to control the distance between $w_{T}$ and $w_{*}$ is to control the size of the spectral norm $\| \prod_{t=0}^T  A_t \|_{2}$.
The spectral radius approach would compute the spectral radius $\rho(A_t)$
and then invoke the Gelfand's formula \eqref{gel} (with $k=1$) for each step $t$, i.e.,
$\| A_t^1 \|_2 = \left( \rho(A_t) + \epsilon_{t,1} \right)^1$, where $\epsilon_{{t,1}}$ is the approximation error.
Then, one might hope $\| \prod_{t=0}^T  A_t \|_{2}$ or $\prod_{t=0}^T \| A_t \|_{2}$ could be approximated by $\prod_{{t=0}}^{T} \rho(A_t)$ well.

The issue is that for a small $k$, e.g., $k=1$, $\epsilon_{t,k}$ in the Gelfand's formula can be large, which makes the spectral radius a poor estimate of the spectral norm. Indeed, it is easy to cook up an example based on the matrix $A_{t}$ in \eqref{dynamic}. Let $d=1$ (one-dimensional), $\eta H_{t}=0.1$, and $\beta=0.9$. We have
\begin{equation}
\textstyle
A_t = \begin{bmatrix} 1 -0.1 + 0.9 &   -0.9 \\ 1  & 0 \end{bmatrix}.
\end{equation}
For this example, the spectral radius $\rho(A_{t})=0.9487$ and the spectral norm $\|A_t\|_{2}= 2.21$. So the approximation error is $\epsilon_{t,1} =2.21 - 0.9487 \approxeq 1.25$, which implies that the product $\prod_{{t=0}}^{T} \rho(A_t)$ can not be guaranteed to approximate the spectral norm arbitrarily well.
This example also shows that the spectral norm $\| A_t \|_{2}$ 
is not useful for analyzing the convergence rate, since it is larger than $1$
(see Appendix~\ref{app:HB} for more discussions).
To show a convergence, 
we need to identity a quantity that is relevant to the convergence rate of the distance and has a non-trivial upper bound which is smaller than $1$.
One might consider bounding the spectral norm of the matrix product in \eqref{eq3} directly, i.e., bounding $\| \prod_{t=0}^T A_t \|_2$. The issue is that the matrix product $\prod_{t=0}^T A_t$ does not have a simple analytical form. 

Regarding the approach of \citet{WLA21}, we have mentioned its limitation
in the introduction when dealing with the case that the Hessian changes significantly. We expound this further in Appendix~\ref{app:HB}. 
To summarize, the limitations of the previous techniques cause obstacles to analyzing HB when the Hessian changes a lot during the update. A new analysis is needed.

\subsection{New analysis}

Before showing our approach, we need the following technical result from \citep{WLA21}. 
The lemma below shows that while 
 the non-symmetric matrix $A$ does not have an eigen-decomposition in reals,
 it has an decomposition in the complex field
when the momentum parameter is larger than a threshold.
Moreover, the spectral norm of the diagonal matrix is $\sqrt{\beta}$.
The full version of Lemma~\ref{lem:diagonal} and its proof is in Appendix~\ref{app:lem1}.

\begin{lemma}(short version)
\label{lem:diagonal} (Lemma 6 in \cite{WLA21})
Consider a matrix
\begin{equation}
A:=
\begin{bmatrix} 
(1 + \beta) I_d -  H  & - \beta I_d\\ 
I_d & 0
\end{bmatrix}
\in \reals^{2 d \times 2 d},
\end{equation}
where $H \in \reals^{{d \times d}}$ is a symmetric matrix and has eigenvalues $\lambda_{1} \geq \lambda_2 \geq \dots \geq \lambda_{i} \geq \dots \geq \lambda_{d}$.
Suppose $\beta$ satisfies $1\geq \beta > \left( 1 - \sqrt{\lambda_i}  \right)^2$
for all $i \in [d]$. 
Then, we have
$A$ is diagonalizable with respect to the complex field $\mathbb{C}$ in $\mathbb{C}^{2d \times 2d}$:
\begin{equation}
(\textbf{spectral decomposition}) \quad A = PDP^{-1}
\end{equation}
for some matrix
$P = \tilde{U}\tilde{P} Q$,
where
$\tilde{U}$ and $\tilde{P}$ are some orthogonal matrices,
$Q ={\rm Diag}(Q_1,\dots,Q_d) \in \mathbb{C}^{{2d \times 2d}}$ is a block diagonal matrix,
and $D \in \mathbb{C}^{{2d \times 2d}}$ is a diagonal matrix.
Furthermore, the diagonal matrix $D$ 
satisfies $\| D \|_{2} = \sqrt{\beta}$.
\end{lemma}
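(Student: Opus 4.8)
The plan is to exploit the block structure of $A$ and reduce the $2d\times 2d$ eigenproblem to $d$ decoupled $2\times 2$ problems by two successive unitary conjugations, after which the whole statement follows from an elementary analysis of a $2\times 2$ matrix. First I would diagonalize the symmetric matrix $H$ by the spectral theorem, writing $H = U\Lambda U^\top$ with $U$ orthogonal and $\Lambda = \mathrm{diag}(\lambda_1,\dots,\lambda_d)$. Conjugating $A$ by the block-diagonal orthogonal matrix $\tilde U := \mathrm{Diag}(U,U)$ and using $U^\top U = I_d$ gives
\[
\tilde U^\top A \tilde U = \begin{bmatrix}(1+\beta)I_d - \Lambda & -\beta I_d\\ I_d & 0\end{bmatrix},
\]
which replaces $H$ by its diagonalization while leaving all the $I_d$ and $\beta I_d$ blocks untouched.

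Next I would apply the permutation $\tilde P$ that reorders the coordinates from $(x_1,\dots,x_d,y_1,\dots,y_d)$ to the interleaved order $(x_1,y_1,\dots,x_d,y_d)$. Since every block of $\tilde U^\top A \tilde U$ is diagonal, this permutation decouples the matrix into a block-diagonal form
\[
\tilde P^\top \tilde U^\top A \tilde U \tilde P = \mathrm{Diag}(B_1,\dots,B_d),\qquad B_i = \begin{bmatrix}1+\beta-\lambda_i & -\beta\\ 1 & 0\end{bmatrix}.
\]
A permutation matrix is orthogonal, hence unitary, so both conjugations preserve unitarity of the accumulated change of basis; this is what will let me read off $\tilde U$ and $\tilde P$ as the unitary factors of $P$.

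The heart of the argument is the analysis of each $2\times 2$ block $B_i$. Its characteristic polynomial is $\mu^2-(1+\beta-\lambda_i)\mu+\beta$, so $\mathrm{tr}(B_i)=1+\beta-\lambda_i$ and $\det(B_i)=\beta$. I would show that the discriminant $(1+\beta-\lambda_i)^2-4\beta$ is strictly negative exactly when $\beta>(1-\sqrt{\lambda_i})^2$: indeed $1+\beta-2\sqrt\beta=(1-\sqrt\beta)^2$, so the discriminant is negative iff $(1-\sqrt\beta)^2<\lambda_i$, i.e.\ $\sqrt\beta+\sqrt{\lambda_i}>1$, which is precisely the hypothesis. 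Hence each $B_i$ has a pair of complex-conjugate eigenvalues $\mu_i,\bar\mu_i$, and since their product equals $\det(B_i)=\beta$ we get $|\mu_i|=\sqrt\beta$. Because these two eigenvalues are distinct (the discriminant is strictly nonzero), $B_i$ is diagonalizable over $\mathbb{C}$, say $B_i=Q_iD_iQ_i^{-1}$ with $D_i=\mathrm{diag}(\mu_i,\bar\mu_i)$.

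Finally I would assemble the pieces. Setting $Q=\mathrm{Diag}(Q_1,\dots,Q_d)$ and $D=\mathrm{Diag}(D_1,\dots,D_d)$ yields $\mathrm{Diag}(B_i)=QDQ^{-1}$, and unwinding the two unitary conjugations gives $A=(\tilde U\tilde P Q)\,D\,(\tilde U\tilde P Q)^{-1}$, so $P=\tilde U\tilde P Q$ as claimed. For the norm, $D$ is a complex diagonal matrix with $DD^*=\beta I_{2d}$ because every diagonal entry has modulus $\sqrt\beta$, so $\|D\|_2=\sqrt{\lambda_{\max}(DD^*)}=\sqrt\beta$. I expect the only genuinely delicate point to be the discriminant computation that ties the hypothesis $\beta>(1-\sqrt{\lambda_i})^2$ to the negativity of the discriminant (and thus to the eigenvalues forming a conjugate pair of modulus $\sqrt\beta$); the remainder is bookkeeping with orthogonal changes of basis, with the one caveat that diagonalizability of the full $A$ is obtained block-by-block and therefore does not require the $\lambda_i$ to be distinct.
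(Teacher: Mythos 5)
Your proposal is correct and follows essentially the same route as the paper's own proof: diagonalize $H$, conjugate by $\mathrm{Diag}(U,U)$ and an interleaving permutation to reduce to the $2\times 2$ blocks $\begin{bmatrix}1+\beta-\lambda_i & -\beta\\ 1 & 0\end{bmatrix}$, and read off conjugate eigenvalues of modulus $\sqrt{\beta}$ from the characteristic polynomial $x^2-(1+\beta-\lambda_i)x+\beta$. One tiny imprecision: the discriminant is negative iff $(1-\sqrt{\beta})^2<\lambda_i<(1+\sqrt{\beta})^2$, and the hypothesis $\beta>(1-\sqrt{\lambda_i})^2$ (with $\beta\le 1$) is equivalent to both inequalities, not just the lower one you cite---but this does not affect the conclusion.
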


Now we are ready to describe our approach.
Recall the dynamic \eqref{dynamic} and the definition of the average Hessian $H_{t}$ defined in \eqref{avgH}.
Using Lemma~\ref{lem:diagonal} with $H \gets \eta H_{t}$, 
we know the matrix $A_{t}$ in the dynamic \eqref{dynamic} has a decomposition 
in the complex field,
$A_t = P_tD_tP_t^{-1}$, where $D_{t}$ is a diagonal matrix whose spectral norm is
$\| D_{{t}} \|_2 = \sqrt{\beta_t}$.
Denote $\xi_{t}:= w_t - w_*$.
One can recursively expand the dynamic \eqref{dynamic} from $T+1$ back to time $t_{0}<T+1$ as follows.
\begin{equation} \label{eqeq}
\begin{split}
 & \textstyle \begin{bmatrix} \xi_{T+1} \\ \xi_T \end{bmatrix}  
   = 
A_T A_{T-1} \cdots A_{t_0}  \begin{bmatrix} \xi_{t_0} \\ \xi_{t_0-1} \end{bmatrix}
\\ & \textstyle = 
\left( P_TD_TP_T^{-1} \right)
\left( P_{T-1}D_{T-1}P_{T-1}^{-1} \right)
\left( P_{T-2}D_{T-2}P_{T-2}^{-1} \right)
\\ & \textstyle
\quad 
\cdots
\left( P_{t_0}D_{t_0}P_{t_0}^{-1} \right)
\begin{bmatrix} \xi_{t_0} \\ \xi_{t_0-1} \end{bmatrix}, 
\\ &= 
P_T \underbrace{ \left( D_TP_T^{-1} 
P_{T-1} \right) }_{:= \Psi_T} \underbrace{ \left( D_{T-1}P_{T-1}^{-1} 
P_{T-2} \right) }_{:= \Psi_{T-1} } \cdots
\\ & \textstyle
\cdot \underbrace{ \left( D_{T-2}P_{T-2}^{-1} P_{T-3} \right) }_{:= \Psi_{T-2}}
\cdots
\underbrace{ \left(D_{t_0+1} P_{t_0+1}^{-1} P_{t_0} \right) }_{:= \Psi_{t0}} D_{t_0}P_{t_0}^{-1}
\begin{bmatrix} \xi_{t_0} \\ \xi_{t_0-1} \end{bmatrix}. 
\end{split}
\end{equation}
Hence, one has
\begin{equation} \label{eq:multi}
\begin{split}
\textstyle
\left \| \begin{bmatrix} \xi_{T+1} \\ \xi_T \end{bmatrix} \right\|_2 & \leq \|P_T\|_2\left(\prod_{t=t_0+1}^T \underbrace{ \|D_tP_t^{-1}P_{t-1}\|_2 }_{:=\|\Psi_t \|_2 } \right)\\ & \textstyle \qquad \qquad \times \|D_{t_0}P_{t_0}^{-1}\|_2 \left \|\begin{bmatrix} \xi_{t_0} \\ \xi_{t_0-1} \end{bmatrix} \right\|_2.
\end{split}
\end{equation}
From \eqref{eq:multi}, we see that the 
distance can be bounded as
$\|\xi_{T}\|=O(\prod_{t=t_0+1}^T \|\Psi_t \|_2) \| \xi_{t_0}\|$, where we define:
\begin{definition}
\[
(\textbf{instantaneous rate at $t$}) \quad \|\Psi_t \|_2 := \|D_tP_t^{-1}P_{t-1}\|_2.
\]
\end{definition}
The instantaneous rate is determined by the interplay between eigenvectors of the average Hessians $H_t$ and $H_{{t-1}}$ at the two consecutive time points.
By Lemma~\ref{lem:diagonal}, we know
\begin{equation}
\|\Psi_t \|_2 := \|D_tP_t^{-1}P_{t-1}\|_2 \leq 
\sqrt{\beta_t} \|P_t^{-1}P_{t-1}\|_{2}.
\end{equation}
Therefore, to upper-bound the instantaneous rate, it suffices to upper-bound
$\|P_t^{-1}P_{t-1}\|_{2}$, which arises from the change of the eigen-space of the average Hessian. 
Our key finding is that $\|P_t^{-1}P_{t-1}\|_{2}$ admits a simple closed-form expression, which can be used to derive an upper bound of the instantaneous rate. 
Based on the decomposition from Lemma~\ref{lem:diagonal}, 
we have $P_t = \tilde{U}_t \tilde{P}_t Q_t$,
where $\tilde{U}_t$ and $\tilde{P}_t$ are some orthogonal matrices.
Then, $\| P_t^{-1}P_{t-1} \|_2 = \| Q_t^{-1} M_t Q_{t-1} \|_2$, where $M_t = (\tilde{P}_t)^{-1} (\tilde{U}_t)^{-1} \tilde{U}_{t-1}\tilde{P}_{t-1}$ is an orthogonal matrix.

In the following lemma, 
we denote the eigenvalues of the average Hessian $H_{t}$ in the dynamic \eqref{dynamic} as $\lambda_{t,1}\geq \lambda_{t,2} \geq \dots \geq \lambda_{{t},d}$.
The lemma requires the following \emph{co-diagonalization} to hold:
\begin{equation} 
\spadesuit: ~~
\| Q_t^{-1} M_t Q_{t-1} \|_2  \leq \| Q_t^{-1} Q_{t-1} \|_2
\end{equation}
which is guaranteed when the dimension $d$ of the optimization variable $w$ is $1$, or more broadly, when the Hessian is diagonal, as in these cases, the matrix $M_t$ is the identity matrix for all $t$.
We elaborate on why the condition holds for these examples in Appendix~\ref{app:when}.

\begin{lemma} (short version) \label{vb}
Assume we have $1\geq \beta_{t} > (1-\sqrt{\eta \lambda_{t,i}})^2$ for all $i \in [d]$ and that the co-diagonalization condition $\spadesuit$ holds.
Then, the {\bf instantaneous rate} $\| \Psi_t\|_2$ at $t$ satisfies:

(I) If $\lambda_{t,i} \geq \lambda_{t-1,i}$, then
\begin{equation} \label{vb1}
\begin{split}
& \|\Psi_t\|_2
 \leq \sqrt{\beta_t} \times
 \text{ extra factor 1},
\end{split}
\end{equation}
where the extra factor 1 is
\begin{equation} \label{vb3}
\underbrace{ \left( \max_{i \in [d]} \sqrt{1+ \frac{\eta \lambda_{t,i} -\eta \lambda_{t-1,i}}{ (1+\sqrt{\beta_t})^2 - \eta \lambda_{t,i}  } }
 + \mathbbm{1}\{\beta_t \neq \beta_{t-1}\} \phi_{t,i}
\right) }_{\text{extra factor 1} }.
\end{equation}

(II) If $\lambda_{t-1,i} \geq \lambda_{t,i}$, then
\begin{equation} \label{vb2}
\begin{split}
\|\Psi_t\|_2
& \leq \sqrt{\beta_t} 
\times \text{ extra factor 2},
\end{split}
\end{equation}
where the extra factor 2 is
\begin{equation} \label{vb4}
\underbrace{ \left( \max_{i \in [d]}
\sqrt{1+ \frac{\eta \lambda_{t-1,i} - \eta \lambda_{t,i}}{ \eta \lambda_{t,i}  - (1-\sqrt{\beta_t })^2  } } 
 + \mathbbm{1}\{\beta_t \neq \beta_{t-1}\} \phi_{t,i}
\right) }_{\text{extra factor 2} }.
\end{equation}
The term $\phi_{{t,i}}$ in \eqref{vb3} and \eqref{vb4}
has an upper-bound which is a function of $| \beta_{t-1} - \beta_t|$, defined in the full version.
\end{lemma}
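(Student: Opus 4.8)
The plan is to bound the instantaneous rate through the factorization $\|\Psi_t\|_2 \le \sqrt{\beta_t}\,\|P_t^{-1}P_{t-1}\|_2$ already exhibited in the excerpt, so that everything reduces to controlling $\|P_t^{-1}P_{t-1}\|_2$, the quantity measuring how much the (complex) eigen-basis of $A_t$ rotates between two consecutive steps. I would start from the explicit structure supplied by Lemma~\ref{lem:diagonal}, namely $P_t=\tilde U_t\tilde P_t Q_t$ with $\tilde U_t,\tilde P_t$ unitary and $Q_t={\rm Diag}(Q_{t,1},\dots,Q_{t,d})$ block diagonal. Since $H_t$ is symmetric, its orthonormal eigenbasis lifts to an orthogonal matrix that, after a fixed interleaving permutation, block-diagonalizes $A_t$ into the $2\times 2$ companion blocks $\begin{bmatrix} 1+\beta_t-\eta\lambda_{t,i} & -\beta_t\\ 1 & 0\end{bmatrix}$; the unitary factor $\tilde U_t\tilde P_t$ collects this orthogonal change of coordinates and $Q_{t,i}$ diagonalizes the $i$-th block. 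Because unitary factors preserve the spectral norm, the key cancellation is that the outer unitary pieces attached to $H_t$ and $H_{t-1}$ either coincide (when the two average Hessians share an eigenspace) or differ only by one orthogonal rotation, reducing $\|P_t^{-1}P_{t-1}\|_2$ to $\max_i\|Q_{t,i}^{-1}Q_{t-1,i}\|_2$ up to this rotation factor.

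The heart of the argument is then a purely $2\times 2$ computation, which is where the claimed closed form comes from. Under the hypothesis $1\ge\beta_t>(1-\sqrt{\eta\lambda_{t,i}})^2$ the block eigenvalues are the complex-conjugate pair $\sqrt{\beta_t}\,e^{\pm\mathrm{i}\theta_{t,i}}$, and I would write the eigenvector matrix $Q_{t,i}$ explicitly in terms of these roots, invert it in closed form (its inverse is controlled by the separation of the conjugate eigenvalues, i.e.\ by the imaginary part $\sqrt{\beta_t}\sin\theta_{t,i}$), and multiply to obtain $Q_{t,i}^{-1}Q_{t-1,i}$ entrywise. Computing the spectral norm of this explicit $2\times2$ matrix and simplifying yields exactly the two algebraic expressions in \eqref{vb3} and \eqref{vb4}: the sign of the eigenvalue increment $\eta\lambda_{t,i}-\eta\lambda_{t-1,i}$ dictates whether the denominator inside the square root is the gap to the upper collision boundary $(1+\sqrt{\beta_t})^2-\eta\lambda_{t,i}$ (case I, $\lambda_{t,i}\ge\lambda_{t-1,i}$) or the gap to the lower one $\eta\lambda_{t,i}-(1-\sqrt{\beta_t})^2$ (case II). I would isolate the part of $Q_{t,i}^{-1}Q_{t-1,i}$ that survives when $\beta_t=\beta_{t-1}$ as the leading square-root term, and collect the residual coming from $\beta_t\neq\beta_{t-1}$ into the correction $\phi_{t,i}$, bounding it by a Lipschitz-type estimate in $|\beta_{t-1}-\beta_t|$, which is where the indicator $\mathbbm{1}\{\beta_t\neq\beta_{t-1}\}$ enters.

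I expect the main obstacle to be twofold. First, the $2\times2$ eigenvector matrices $Q_{t,i}$ become ill-conditioned as $\eta\lambda_{t,i}$ approaches either collision boundary $(1\pm\sqrt{\beta_t})^2$, where the conjugate eigenvalues merge and $A_t$ ceases to be diagonalizable; the denominators $(1+\sqrt{\beta_t})^2-\eta\lambda_{t,i}$ and $\eta\lambda_{t,i}-(1-\sqrt{\beta_t})^2$ appearing in the extra factors are exactly these distances-to-collision, so the estimate must be organized so that the conditioning of $Q_{t,i}$ cancels correctly between the factors $Q_{t,i}^{-1}$ and $Q_{t-1,i}$ rather than being bounded separately. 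Second, carrying out the explicit spectral-norm evaluation of the non-normal $2\times2$ product $Q_{t,i}^{-1}Q_{t-1,i}$ and then proving that the two-term bound in \eqref{vb3}/\eqref{vb4} dominates it for both orderings of the eigenvalues is the delicate algebraic step; if one further wants to allow the eigenvectors of $H_t$ and $H_{t-1}$ to genuinely differ, the non-commuting orthogonal factor $U_t^{\top}U_{t-1}$ must be folded into the estimate, which I would handle by a perturbation bound on how far that rotation is from the identity.
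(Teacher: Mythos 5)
Your proposal follows essentially the same route as the paper's proof: factor out $\|D_t\|_2=\sqrt{\beta_t}$, reduce $\|P_t^{-1}P_{t-1}\|_2$ to the block-diagonal product of the $2\times 2$ eigenvector matrices $Q_{t,i}^{-1}Q_{t-1,i}$, evaluate its spectral norm in closed form from the conjugate eigenvalue pair of modulus $\sqrt{\beta_t}$, and split the result into the constant-$\beta$ leading square-root term (with the distances to the collision boundaries $(1\pm\sqrt{\beta_t})^2$ in the denominators) plus a correction $\phi_{t,i}$ controlled by $|\beta_{t-1}-\beta_t|$. The only substantive difference is that you explicitly flag the non-commuting orthogonal factor $U_t^{\top}U_{t-1}$ as needing to be folded into the estimate, whereas the paper simply drops the unitary factors (effectively treating the eigenbases of $H_t$ and $H_{t-1}$ as aligned), so your plan is, if anything, slightly more careful on that point.
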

The full version of Lemma~\ref{vb} and its proof is in Appendix~\ref{app:lem2}. 
Lemma~\ref{vb} hints at what the optimal momentum parameter is. It says that as long as $\beta_{t}$ is larger than the threshold $\max_{i \in [d]} (1-\sqrt{\eta \lambda_{t,i}})^2 $, 
the instantaneous rate is the square root of the parameter
$\sqrt{\beta_t}$ modulo an extra factor.
So the momentum parameter should not be set far larger than the threshold.
The extra factors in the lemma are due to bounding $\|P_t^{-1}P_{t-1}\|_{2}$, and
we would like the extra factors to be as small as possible to obtain an accelerated linear rate.
To see this, consider setting $\eta = \Theta\left( \frac{1}{L} \right)$. Then,
we will have $\sqrt{ \beta_t } =  1 - \Theta \left( \frac{1}{\sqrt{\kappa_t} }   \right) $, where $\kappa_{t}:= \frac{L}{\lambda_{\min}(H_t)}$ is the condition number of $H_{t}$. That is, the instantaneous rate $\| \Psi_t\|_2$ will be
\begin{equation} \label{eq:int}
\textstyle
\| \Psi_t\|_2 =
\left( 1 - \Theta \left( \frac{1}{\sqrt{\kappa_t} } \right)  \right)
\times \text{extra factor 1 or 2}.
\end{equation}
Hence if the extra factors decay fast enough over time so that they are approximately $1$ after some number of iterations $t_{0}$,
then for all $t \geq t_{0}$ the instantaneous rate $\| \Psi_t\|_{2}$ will be an accelerated linear rate
$\| \Psi_t\|_2 = 
 1 - \Theta \left( \frac{1}{\sqrt{\kappa_t} } \right)$.

Observe that the extra factors \eqref{vb3} and \eqref{vb4} have the term
$| \lambda_{t,i} - \lambda_{t-1,i} |$, which is resulted from the change of the average Hessian.  
In the rest of this subsection, we discuss how to bound this term, as it will help  upper-bound the extra factors and consequently the instantaneous rate $\|\Psi_t\|_{2}$.
We start by connecting the difference $| \lambda_{t,i} - \lambda_{t-1,i} |$
and the distance between $w_t$ and $w_{{t-1}}$.

\begin{lemma} \label{lem:lambdadiff}
Suppose that the Hessian of $f(\cdot)$ is $L_{H}$-Lipschitz,
i.e.,
$\| \nabla^2 f(x) - \nabla^2 f(y) \|_2 \leq  L_H \| x - y \|_2,$
for any pair of $x,y$.
Then, 
$| \lambda_{t,i} - \lambda_{{t-1},i} | \leq L_H \| w_t - w_{t-1}\|_2.$
\end{lemma}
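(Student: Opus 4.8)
The plan is to reduce the eigenvalue-perturbation claim to a spectral-norm bound on the difference of the two average Hessians and then invoke Weyl's inequality. First I would observe that both $H_t = H_f(w_t)$ and $H_{t-1} = H_f(w_{t-1})$ are real symmetric matrices, since each integrand $\nabla^2 f(\cdot)$ is symmetric and integration over $\theta$ preserves symmetry. This guarantees that both matrices have real eigenvalues that can be sorted in decreasing order as $\lambda_{t,1} \geq \dots \geq \lambda_{t,d}$, so that the $i$-th eigenvalue $\lambda_{t,i}$ is well defined and a perturbation inequality is legitimately applicable.

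The main step is to bound $\|H_t - H_{t-1}\|_2$. Using the definition \eqref{avgH}, I would write the difference as a single integral,
\[
H_t - H_{t-1} = \int_0^1 \left( \nabla^2 f(\theta w_t + (1-\theta) w_*) - \nabla^2 f(\theta w_{t-1} + (1-\theta) w_*) \right) d\theta,
\]
then pass the spectral norm inside the integral via the triangle inequality and apply the $L_H$-Lipschitzness of the Hessian to each integrand. Since the two arguments differ by exactly $\theta(w_t - w_{t-1})$, each integrand is bounded by $L_H \theta \|w_t - w_{t-1}\|_2$; bounding $\theta \leq 1$ inside the integral then gives $\|H_t - H_{t-1}\|_2 \leq L_H \|w_t - w_{t-1}\|_2$ directly. (Carrying out the $\theta$-integral exactly would even yield the slightly sharper constant $L_H/2$, which also implies the claim.)

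Finally, I would apply Weyl's eigenvalue-perturbation inequality: for real symmetric matrices, the $i$-th sorted eigenvalues satisfy $|\lambda_i(H_t) - \lambda_i(H_{t-1})| \leq \|H_t - H_{t-1}\|_2$. Combining this with the spectral-norm bound from the previous step yields $|\lambda_{t,i} - \lambda_{t-1,i}| \leq L_H \|w_t - w_{t-1}\|_2$, which is the desired conclusion.

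I do not anticipate a serious obstacle, as this is a routine perturbation argument. The only points requiring care are (i) confirming the symmetry of the average Hessians so that the eigenvalues are real and orderable and Weyl's inequality genuinely applies, and (ii) correctly interchanging the spectral norm with the integral; both are standard and can be stated in a line or two.
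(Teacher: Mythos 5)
Your proposal is correct and follows essentially the same route as the paper: the paper also bounds $|\lambda_{t,i}-\lambda_{t-1,i}|$ by $\|H_t-H_{t-1}\|_2$ via Weyl's perturbation inequality (cited there as Theorem 8.1 of Bhatia) and then bounds the norm of the difference of the two integrals using the $L_H$-Lipschitzness of the Hessian. Your observation that carrying out the $\theta$-integral exactly gives the sharper constant $L_H/2$ is a nice (if inessential) refinement.
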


The proof of Lemma~\ref{lem:lambdadiff} is in Appendix~\ref{app:lem:lambdadiff}.
Lemma~\ref{lem:lambdadiff} suggests that it suffices to bound the distance 
$\| w_t - w_{t-1}\|_2$ in order to control the extra factors.
The strategy now is to show that the distance $\| w_t - w_{t-1}\|_2$ decays exponentially fast so that the instantaneous rate \eqref{eq:int} becomes
$\| \Psi_t\|_2 =  1 - \Theta \left( \frac{1}{\sqrt{\kappa_t} } \right)$ 
after some point $t_{0}$.
\citet{KDP18} show that for minimizing functions satisfying $\mu$-PL and $L$-smooth via HB, the distance $\| w_t - w_{t-1}\|_2$ decays at an linear rate $1 - \Theta\left( \frac{\mu}{L} \right)$ when the momentum parameter is a constant, i.e., $\beta_{t} = \beta $, and satisfies a constraint.
Specifically,
\citet{KDP18} construct the following Lyapunov function:
\begin{equation} \label{lyp0}
\textstyle
\bar{V}_t := f(w_t) - \min_w f(w) 
+ \frac{L(1-c_{\eta})}{2 c_{\eta}} \| w_t - w_{t-1} \|^2,
\end{equation}
where $\eta = \frac{c_{\eta}}{L}$ for some $c_{{\eta}} \in (0,1)$ is the step size of HB and $L$ is the smoothness constant.
\citet{KDP18} show that the Lyapunov function decays at an linear rate,
i.e., $\bar{V}_{t} = \left( 1 - \Theta\left( \frac{\mu}{L} \right) \right) \bar{V}_{t-1}$, 
when the momentum parameter satisfies $\beta \leq
\sqrt{ \left( 1 - \frac{c_{\eta} \tilde{c} \mu}{L} \right)  
\left( 1-c_{\eta} \right) }$
for some constant $\tilde{c} \in (0,1]$,
which implies that the distance between the iterates at two consecutive time points shrinks linearly towards $0$ in the sense that $\| w_t - w_{t-1}\|^2 = \left( 1 - \Theta\left( \frac{\mu}{L} \right) \right)^t  \bar{V}_0$.
Here we show a similar result under a less restricted constraint on the momentum parameter. Moreover, our result allows the value of the momentum parameter to change during the update.  We construct the following Lyapunov function:
\begin{equation} \label{lyp}
\textstyle
V_t := f(w_t) - \min_w f(w) 
+ \theta \| w_t - w_{t-1} \|^2,
\end{equation}
where $\theta>0$ is a constant to be determined in Theorem~\ref{thm:PL}.


\begin{theorem} \label{thm:PL}
Let
$\theta = 2 \left( \frac{L}{4} \left( 1 + \frac{1}{c_{\eta}}\right) - c_{\mu} \mu    \right) > 0$ for any $c_{{\eta}} \in (0,1]$ 
and any $c_{{\mu}} \in (0,\frac{1}{4}]$.
Set the step size $\eta = \frac{c_{\eta} }{L}$ and set the momentum parameter $\beta_{t}$ so that for all~$t$,
$\beta_t \leq \sqrt{ 
\left(1 - \tilde{c} c_{\eta}^2 \frac{\mu}{L}  \right) 
\left( 1 - \frac{c_{\mu} \mu }{  \frac{L}{4} \left( 1 + \frac{1}{c_{\eta}}\right) + \frac{\theta}{2} }
\right)}$
for some constant $\tilde{c} \in (0,1]$.
Then, HB has
\begin{equation} \label{eq:V}
V_t
 \leq \left( 1 - \tilde{c} c_{\eta}^2 \frac{ \mu }{L }  \right)^t V_0
 = \left( 1 - \Theta\left( \frac{ \mu }{L} \right)  \right)^t V_0,
\end{equation}
where the Lyapunov function $V_{t}$ is defined on \eqref{lyp}.
\end{theorem}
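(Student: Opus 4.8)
The plan is to prove the one-step contraction $V_{t+1} \le \left(1 - \tilde{c}c_\eta^2\frac{\mu}{L}\right)V_t$ and then unroll it to obtain \eqref{eq:V}. Write $g_t := \nabla f(w_t)$, $\Delta_t := w_t - w_{t-1}$, and $f^* := \min_w f(w)$, so that the update reads $w_{t+1} - w_t = -\eta g_t + \beta_t \Delta_t =: \Delta_{t+1}$. First I would apply the descent lemma implied by $L$-smoothness at $w_{t+1}$, namely $f(w_{t+1}) - f^* \le (f(w_t) - f^*) + \langle g_t, \Delta_{t+1}\rangle + \frac{L}{2}\|\Delta_{t+1}\|^2$, and add the momentum penalty $\theta\|\Delta_{t+1}\|^2$ to assemble $V_{t+1}$ on the left-hand side.

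Substituting $\Delta_{t+1} = -\eta g_t + \beta_t\Delta_t$ and expanding turns the right-hand side into a quadratic form in $\|g_t\|^2$, $\langle g_t,\Delta_t\rangle$ and $\|\Delta_t\|^2$, with coefficients $-\eta + (\frac{L}{2}+\theta)\eta^2$, $\beta_t\left(1 - 2(\frac{L}{2}+\theta)\eta\right)$ and $(\frac{L}{2}+\theta)\beta_t^2$ respectively. Plugging in $\eta = c_\eta/L$ and the prescribed $\theta$, the cross-term coefficient collapses to $-2c_\eta\beta_t\left(1 - \frac{2c_\mu\mu}{L}\right)$, which is strictly negative because $c_\mu \le \frac{1}{4}$ forces $\frac{2c_\mu\mu}{L} \le \frac{1}{2}$. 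This sign is exactly why the particular value of $\theta$ is chosen: it is what lets the otherwise indefinite cross term be converted into useful (negative) contributions.

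Next I would dispatch the inner product $\langle g_t, \Delta_t\rangle$. Since its coefficient is negative, I would trade it against $\|g_t\|^2$ and $\|\Delta_t\|^2$ (via a Young-type split, or by using the gradient--momentum identity $\eta g_t = \beta_t\Delta_t - \Delta_{t+1}$, which directly produces a negative $\beta_t^2\|\Delta_t\|^2$ term), and then invoke the PL inequality $\|g_t\|^2 \ge 2\mu(f(w_t) - f^*)$ on the remaining gradient-norm term to manufacture the contraction factor on $(f(w_t) - f^*)$. The last ingredient is purely algebraic: since $\frac{\theta}{2} = \frac{L}{4}(1 + \frac{1}{c_\eta}) - c_\mu\mu$, the denominator appearing in the hypothesis simplifies to $\frac{L}{4}(1 + \frac{1}{c_\eta}) + \frac{\theta}{2} = \theta + c_\mu\mu$, so the momentum condition is exactly $\beta_t^2(\theta + c_\mu\mu) \le \left(1 - \tilde{c}c_\eta^2\frac{\mu}{L}\right)\theta$. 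This is precisely the statement that the final coefficient of $\|\Delta_t\|^2$ can be pushed down to $\left(1 - \tilde{c}c_\eta^2\frac{\mu}{L}\right)\theta$, closing the induction once the coefficient of $(f(w_t) - f^*)$ has been driven to $1 - \tilde{c}c_\eta^2\frac{\mu}{L}$ by the PL step.

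The main obstacle is the cross term, and more precisely the simultaneous control of both pieces of $V_t$. The descent lemma alone contributes $(\frac{L}{2}+\theta)\beta_t^2\|\Delta_t\|^2$, and since $c_\mu\mu \le \frac{1}{4}L < \frac{L}{2}$ this already exceeds the target $(\theta + c_\mu\mu)\beta_t^2\|\Delta_t\|^2$; hence the only room to recover comes from how the negative cross term is divided between the gradient direction (fed to PL to contract $f(w_t) - f^*$) and the momentum direction (used to shave the $\|\Delta_t\|^2$ coefficient). Arranging both quantities to fall below their respective targets at once is where the admissible ranges $c_\eta \in (0,1]$ and $c_\mu \in (0,\frac{1}{4}]$ and the exact form of $\theta$ are all used; I expect this balancing, rather than any single inequality, to be the crux of the argument.
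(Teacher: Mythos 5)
Your plan reproduces the paper's proof structure almost exactly: the same Lyapunov function, the same assembly of $V_{t+1}$ from the $L$-smoothness descent lemma plus the expansion of $\|\Delta_{t+1}\|^2=\|-\eta g_t+\beta_t\Delta_t\|^2$, the same three coefficients $-\eta+(\tfrac{L}{2}+\theta)\eta^2$, $\beta_t(1-L\eta-2\theta\eta)$ and $(\tfrac{L}{2}+\theta)\beta_t^2$, the same use of PL on the residual gradient term, and the same closing identity $\tfrac{L}{4}(1+\tfrac{1}{c_\eta})+\tfrac{\theta}{2}=\theta+c_\mu\mu$ that converts the hypothesis on $\beta_t$ into precisely the inequality needed to close the recursion. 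You have also correctly located the crux: the descent lemma alone gives $(\tfrac{L}{2}+\theta)\beta_t^2\|\Delta_t\|^2$, which overshoots the target $(\theta+c_\mu\mu)\beta_t^2\|\Delta_t\|^2$ by $(\tfrac{L}{2}-c_\mu\mu)\beta_t^2$, so the cross term must supply a \emph{negative} contribution of exactly that size to the $\|\Delta_t\|^2$ coefficient.

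The gap is that neither mechanism you propose for dispatching the cross term can supply that negative mass. Since $\langle g_t,\Delta_t\rangle$ has no definite sign, a Young-type split applied to $C\langle g_t,\Delta_t\rangle$ with $C<0$ yields only $C\langle g_t,\Delta_t\rangle\le |C|\bigl(\tfrac{\epsilon}{2}\|g_t\|^2+\tfrac{1}{2\epsilon}\|\Delta_t\|^2\bigr)$, i.e., it adds \emph{positive} mass to both budgets: the $\|\Delta_t\|^2$ coefficient moves further from its target, and the added $\tfrac{c_\eta}{L}|C|=\tfrac{2c_\eta^2}{L}(1-\tfrac{2c_\mu\mu}{L})$ on the gradient side can flip the sign of the $\|g_t\|^2$ coefficient and destroy the PL step. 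Your alternative via $\eta g_t=\beta_t\Delta_t-\Delta_{t+1}$ only trades $\langle g_t,\Delta_t\rangle$ for a new cross term $\langle\Delta_{t+1},\Delta_t\rangle$ with the same indefiniteness. What the paper actually does at this point is apply $\langle a,b\rangle\le\tfrac12\|a\|^2+\tfrac12\|b\|^2$ with $a=\sqrt{2c_\eta/L}\,g_t$, $b=\beta_t\sqrt{L/(2c_\eta)}\,\Delta_t$ and then carry the \emph{signed} coefficient $(1-L\eta-2\theta\eta)=-2c_\eta(1-\tfrac{2c_\mu\mu}{L})$ through the bound, so that the two resulting terms enter with a negative sign; the piece $\tfrac{L}{4c_\eta}(1-L\eta-2\theta\eta)\beta_t^2=(-\tfrac{L}{2}+c_\mu\mu)\beta_t^2$ is exactly what cancels the $\tfrac{L}{2}$ excess and lands on $(\theta+c_\mu\mu)\beta_t^2$. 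Note that this step is sign-sensitive (an upper bound multiplied by a negative scalar becomes a lower bound, so as written it implicitly requires control of the sign of the inner product), and it is precisely the step your plan would need to make explicit and justify; without it, the balancing you identify as the crux does not go through.
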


Theorem~\ref{thm:PL} shows that the optimality gap converges at a linear rate $1 - \Theta\left( \frac{\mu}{L} \right)$.
It also shows that the distance between the two consecutive iterates shrinks fast.
Our main theorem in the next subsection will use Theorem~\ref{thm:PL} to upper-bound the extra factor in Lemma~\ref{vb}.
The proof of Theorem~\ref{thm:PL} is deferred to Appendix~\ref{app:thm:PL}.

\subsection{Main theorem}

Our main theorem (Theorem~\ref{thm:meta}) will assume the following: \\
\noindent
\textbf{Assumption $\clubsuit$:}
\textit{The function $f(\cdot)$ satisfies $\mu$-PL, is twice differentiable, $L$-smooth, and has $L_{H}$-Lipschitz Hessian.
}


\begin{theorem} \label{thm:meta}
Suppose assumption $\clubsuit$ holds
and that \avg{\lambda_{\min}(H_t)}{w_*} and co-diagonalization $\spadesuit$ hold for all $t$.
Let
$\theta := 2 \left( \frac{L}{4} \left( 1 + \frac{1}{c_{\eta}}\right) - c_{\mu} \mu    \right) > 0$, where $c_{{\mu}} \in (0,\frac{1}{4}]$.
Set the step size $\eta = \frac{c_{\eta} }{L}$
for any constant $c_{\eta} \in (0,1]$
and set the momentum parameter 
$\beta_t = \left(1- c \sqrt{  \eta \lambda_{\min}(H_t) } \right)^2 $ for some $c \in (0,1)$
satisfying 
$\beta_t \leq
\sqrt{
\left(1 - \tilde{c} c_{\eta}^2 \frac{\mu}{L}  \right) 
\left( 1 - \frac{c_{\mu} \mu }{  \frac{L}{4} \left( 1 + \frac{1}{c_{\eta}}\right) + \frac{\theta}{2} }
\right) }$
for some constant $\tilde{c} \in (0,1]$.
Then, for all $t$, the iterate $w_{t}$ of HB satisfies \eqref{eq:V}, i.e.,
the Lyapunov function $V_{t}$ decays linearly for all $t$.
Furthermore,
there exists a time $t_{0} = \tilde{\Theta} \left( \frac{L}{\mu}
 \right)$
such that for all $t \geq t_{0}$, the instantaneous rate $\| \Psi_t \|_2$ at $t$ is
\begin{equation} \label{eq:psi}
\| \Psi_t \|_2 = 1 - \frac{c \sqrt{c_{\eta}}}{2} \frac{1}{\sqrt{\kappa_t}}
= 1 - \Theta\left( \frac{1}{\sqrt{\kappa_t} }  \right), 
\end{equation}
where $\kappa_t:= \frac{L}{\lambda_{\min}(H_t)}$.
Consequently, 
\[
\| w_{T+1} - w_* \| = O\left( \prod_{t=t_0}^{T} \left(  1 - \Theta\left(\frac{1}{ \sqrt{\kappa_{t}}} \right)  \right) \right) \| w_{t_0} - w_* \|.
\]
\end{theorem}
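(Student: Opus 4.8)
The plan is to combine the linear decay of the Lyapunov function supplied by Theorem~\ref{thm:PL} with the multiplicative bound \eqref{eq:multi} coming from the spectral decomposition of Lemma~\ref{lem:diagonal}. Once Theorem~\ref{thm:PL} gives $V_t \le (1 - \Theta(\mu/L))^t V_0$, the definition \eqref{lyp} yields $\theta\|w_t - w_{t-1}\|_2^2 \le V_t$, so the one-step displacement satisfies $\|w_t - w_{t-1}\|_2 \le \rho^{t}\sqrt{V_0/\theta}$ with $\rho := \sqrt{1 - \Theta(\mu/L)}$. This geometric decay of the displacement is the engine that will drive every extra factor of Lemma~\ref{vb} down toward $1$.

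First I would verify that the prescribed momentum $\beta_t = (1 - c\sqrt{\eta\lambda_{\min}(H_t)})^2$ is admissible for each lemma. Since $c \in (0,1)$ and $\eta\lambda_{t,i} = c_\eta \lambda_{t,i}/L \le c_\eta \le 1$, we get $c\sqrt{\eta\lambda_{\min}(H_t)} < \sqrt{\eta\lambda_{\min}(H_t)} \le \sqrt{\eta\lambda_{t,i}}$ for every $i$, hence $\beta_t > (1-\sqrt{\eta\lambda_{t,i}})^2$, which is precisely the hypothesis enabling Lemma~\ref{lem:diagonal} and Lemma~\ref{vb}; the stated upper bound on $\beta_t$ is the hypothesis of Theorem~\ref{thm:PL}, assumed in the statement (and consistent, because the averaged-out condition gives $\lambda_*$-PL so one may take $\mu \le \lambda_* \le \lambda_{\min}(H_t)$, whence $\mu/L \le 1/\sqrt{\kappa_t} = \sqrt{\lambda_{\min}(H_t)/L}$). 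Substituting $\sqrt{\beta_t} = 1 - c\sqrt{c_\eta}/\sqrt{\kappa_t}$ into $\|\Psi_t\|_2 \le \sqrt{\beta_t}\cdot(\text{extra factor})$ reduces the whole task to showing that past some $t_0$ the extra factor is at most $1 + \tfrac{c\sqrt{c_\eta}}{2\sqrt{\kappa_t}}$; then $\|\Psi_t\|_2 \le (1 - \tfrac{c\sqrt{c_\eta}}{\sqrt{\kappa_t}})(1 + \tfrac{c\sqrt{c_\eta}}{2\sqrt{\kappa_t}}) \le 1 - \tfrac{c\sqrt{c_\eta}}{2\sqrt{\kappa_t}}$, which is exactly \eqref{eq:psi}.

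The heart of the argument, and the main obstacle, is bounding the extra factors \eqref{vb3}--\eqref{vb4}. Both contain $|\lambda_{t,i}-\lambda_{t-1,i}|$, which Lemma~\ref{lem:lambdadiff} controls by $L_H\|w_t-w_{t-1}\|_2 \le L_H\rho^t\sqrt{V_0/\theta}$, and $\phi_{t,i}$ depends on $|\beta_t-\beta_{t-1}|$, which (since $\beta_t$ is a smooth function of $\lambda_{\min}(H_t)$, with derivative of size $\Theta(\sqrt{\eta/\lambda_{\min}}) = \Theta(\sqrt{\kappa_t}/L)$) is likewise controlled by $\mathrm{poly}(\kappa_t)\,\rho^t$. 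The delicate point is that the case-(II) denominator $\eta\lambda_{t,i}-(1-\sqrt{\beta_t})^2$ and the $\phi_{t,i}$ denominator $|4\beta_t-(1+\beta_t-\eta\lambda_{t,i})^2|$ are \emph{small}: with the chosen $\beta_t$ one has $1-\sqrt{\beta_t} = c\sqrt{\eta\lambda_{\min}(H_t)}$, so both denominators equal $\Theta(\eta\lambda_{\min}(H_t)) = \Theta(1/\kappa_t)$ and are bounded away from zero precisely because $c$ is bounded away from $1$. Carrying these through, the extra factor minus $1$ is at most a $\mathrm{poly}(\kappa_t)$ multiple of $\rho^{t/2}$ (the slowest-decaying contribution, from the nested square roots in $\phi_{t,i}$). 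Imposing $\mathrm{poly}(\kappa)\,\rho^{t/2} \le \tfrac{c\sqrt{c_\eta}}{2\sqrt{\kappa_t}}$ is an inequality of the form $\mathrm{poly}(\kappa)\,\rho^{t/2} \le \Theta(1/\sqrt{\kappa})$; solving for $t$ gives $t_0 = \frac{\Theta(\log\kappa + \log(V_0/\theta))}{-\log\rho} = \tilde\Theta(L/\mu)$, using $-\log\rho = \Theta(\mu/L)$ and $\kappa \le L/\lambda_*$.

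Finally, with $\|\Psi_t\|_2 = 1 - \Theta(1/\sqrt{\kappa_t})$ established for all $t \ge t_0$, I would return to \eqref{eq:multi} restarted from $t_0$ to obtain
\[
\|w_{T+1}-w_*\|_2 \le \|P_T\|_2\Bigl(\prod_{t=t_0+1}^{T}\|\Psi_t\|_2\Bigr)\|D_{t_0}P_{t_0}^{-1}\|_2\left\|\begin{bmatrix}\xi_{t_0}\\ \xi_{t_0-1}\end{bmatrix}\right\|_2.
\]
The boundary conditioning factors $\|P_T\|_2$ and $\|D_{t_0}P_{t_0}^{-1}\|_2$ are, by the block structure $P=\tilde U\tilde P Q$ of Lemma~\ref{lem:diagonal}, polynomial in the condition number and are therefore absorbed into the $O(\cdot)$; together with $\|[\xi_{t_0};\xi_{t_0-1}]\|_2 = O(\|w_{t_0}-w_*\|_2)$ (the displacement at $t_0$ being already small relative to the distance), this yields the claimed $\|w_{T+1}-w_*\| = O\bigl(\prod_{t=t_0}^{T}(1-\Theta(1/\sqrt{\kappa_t}))\bigr)\|w_{t_0}-w_*\|$.
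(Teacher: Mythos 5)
Your proposal is correct and follows essentially the same route as the paper's proof: invoke Theorem~\ref{thm:PL} to get geometric decay of $\|w_t-w_{t-1}\|_2$ via the Lyapunov function, push that through Lemma~\ref{lem:lambdadiff} into the extra factors of Lemma~\ref{vb}, lower-bound the critical denominators by $\Theta(\eta\lambda_{\min}(H_t)(1-c^2))$ using that $c$ is bounded away from $1$, and solve for the burn-in time $t_0=\tilde\Theta(L/\mu)$ before chaining the instantaneous rates through \eqref{eq:multi}. The only cosmetic difference is that you control $|\beta_t-\beta_{t-1}|$ by a Lipschitz bound on $\beta$ as a function of $\lambda_{\min}(H_t)$, whereas the paper factors the difference of squares directly and picks up an extra square root; both yield the same $\tilde\Theta(L/\mu)$ up to the constant inside the logarithm.
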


The proof of Theorem~\ref{thm:meta} is given in Appendix~\ref{app:thm:meta}, which is built on the results developed in the previous subsection.
Theorem~\ref{thm:meta} shows that HB has a provable advantage over GD even when the optimization landscape is non-convex, as long as the average Hessian towards $w_{*}$ is positive definite. 
In the next section we will give some examples where the averaged-out condition holds.
But at this moment, let us elaborate on our theoretical statement of acceleration by
making a detailed comparison between GD and HB.
Observe that the dynamic of the distance due to GD can be written as
\begin{equation} \label{GDd}
\begin{split}
\textstyle w_{t+1} - w_*  & \textstyle =  w_t - \eta \nabla f(w_t) - w_*
\\ & \textstyle
=  \left( I_d - \eta H_t \right) (w_t - w_*),
\end{split}
\end{equation}
where $H_{t}$ is the average Hessian towards $w_{*}$ defined in \eqref{avgH}
and we used the fundamental theorem of calculus in the second equality as \eqref{eq1}.
Then,
choosing the step size $\eta = \frac{c_{\eta}}{L}$ for some $c_{{\eta}}$,
we have from \eqref{GDd},
\begin{equation} \label{GDd2}
\textstyle \| w_{t+1} - w_* \|_2 = \left( 1  - \Theta\left( \frac{\lambda_{\min}(H_t) }{L} \right) \right) \| w_t - w_* \|_2.
\end{equation}
The inequality \eqref{GDd2} shows that the instantaneous rate of GD at $t$ is $1  - \Theta\left( \frac{\lambda_{\min}(H_t) }{L} \right)$.
On the other hand, GD under PL is known \citep{KNS17} to have
\begin{equation} \label{GDd3}
\textstyle f(w_t) - f(w_*) = \left( 1 - \Theta\left( \frac{\mu}{L} \right)   \right)^t \left( f(w_0) - f(w_*)  \right).
\end{equation}
Comparing \eqref{GDd2} and \eqref{GDd3} of GD to \eqref{eq:V} and \eqref{eq:psi} of HB in Theorem~\ref{thm:meta},
we can conclude that both GD and HB are guaranteed to converge
at the rate $1 - \Theta\left( \frac{\mu}{L} \right) $ for all $t$.
On the other hand, after certain number of iterations $t_{0}=\tilde{\Theta} \left( \frac{L}{\mu} \right)$, 
where
the notation $\tilde{\Theta}(\cdot)$ hides a logarithmic factor 
\footnote{The logarithmic factor is shown on \eqref{103} in Appendix~\ref{app:thm:meta}.}
that depends on $L_{H}$, $V_{0}$, $\theta$, and other parameters, 
HB has the instantaneous rate
$ 1  - \Theta\left( \sqrt{ \frac{\lambda_{\min}(H_t) }{L} } \right)  =  1 - ~ \Theta\left( \frac{1}{ \sqrt{\kappa_t} } \right)  $,
which is better than 
$ 1  - \Theta\left(  \frac{\lambda_{\min}(H_t) }{L}  \right) = 
1 - \Theta\left( \frac{1}{\kappa_t} \right) $ 
of GD.

The reader would notice that for the acceleration result to be meaningful, we need
$ 1  - \Theta\left( \sqrt{ \frac{\lambda_{\min}(H_t) }{L} } \right)  <  1 - \Theta\left( \frac{\mu}{L} \right)   $.
Hence it is desired to have 
$\lambda_{\min}(H_t) \geq \lambda_{*} > 0 , \forall t$ and that
$\lambda_* = \Theta\left( \mu \right)$,
which is actually problem-dependent. But we will discuss this in the next subsection and will
show that all the examples given in the last part of this paper have the desired property.

Let us emphasize that the parameters $c$ and $c_{\eta}$, which appear in the instantaneous rate on \eqref{eq:psi}, can indeed be any \emph{independent universal constant} in $(0,1)$. 
Since Theorem 3 indicates 
setting the momentum parameter 
$\beta_t = (1- c \sqrt{c_{\eta}} \sqrt{ \frac{\lambda_{\min}(H_t) }{L} } )^2 $ for any $c \in (0,1)$ and $c_{{\eta}} \in (0,1]$, 
we need to check if the choice of $\beta_{t}$ satisfies the upper-bound constraint, i.e., check if $
\left(1- c \sqrt{c_{\eta}} \sqrt{ \frac{\lambda_{\min}(H_t) }{L} } \right)^2
\leq \sqrt{ 
\left(1 - \tilde{c} c_{\eta}^2 \frac{ \mu }{L } \right)
\left( 1 - \frac{c_{\mu} \mu }{  \frac{L}{4} ( 1 + \frac{1}{c_{\eta}}) + \frac{\theta}{2} }
\right)}$. To show this, it suffices to show 
\begin{equation} \label{22}
\textstyle \left(1- c \sqrt{c_{\eta}} \frac{\lambda_{\min}(H_t) }{L}  \right)^2
\leq  
\left(1 -  \tilde{c} c_{\eta}^2  \frac{\mu}{L}  \right) 
\left( 1 - \frac{4 c_{\mu} \mu }{ L } \right).
\end{equation}
Let $\lambda_{*}$ be the constant such that $\lambda_{\min}(H_t) \geq \lambda_{*}> 0, \forall t$. 
Then, \eqref{22} holds if $1- c \sqrt{c_{\eta}} \frac{\lambda_*}{L}  \leq 1 -\tilde{c} c_{\eta}^2 \frac{\mu}{L}   $
and $1- c \sqrt{c_{\eta}} \frac{\lambda_*}{L} \leq  1 - 4 c_{\mu}  \frac{\mu }{ L } $.
Define the ratio $r= \frac{\lambda_*}{\mu}$.
Then, the constraint is equivalent to 
\begin{equation} \label{con}
\textstyle
\tilde{c} \leq c c_{\eta}^{-3/2} r \text{ and } c_{{\mu}} \leq\frac{c \sqrt{c_{\eta}} r}{ 4}.
\end{equation}
The condition on \eqref{con} can be easily met.
For example, we can choose $c=0.9$ and $c_{{\eta}}=1.0$ so that they are universal constants and do not depend on other problem parameters. 
Under this choice, the condition \eqref{con} becomes
$\tilde{c} \leq 0.9 r$ and $c_{{\mu}} \leq 0.225 r$, which 
we can easily satisfy.
It is noted that the parameter $\tilde{c}$ and $c_{{\mu}}$ \emph{only} affects the number of iterations spent in the burn-in stage. 
In summary, $c \in (0,1)$ and $c_{{\eta}} \in (0,1]$ in the theorem can be independent universal constants, while $\tilde{c}$ and $c_{{\mu}}$ in Theorem~\ref{thm:PL} for the burn-in stage might depend on the ratio $r= \frac{\lambda_{*} }{ \mu}$ under the mild condition $\eqref{con}$.


The presence of the \emph{burn-in} stage before exhibiting an accelerated linear rate is standard for HB, c.f., \citet{P64,LRP16}.
\citet{KDP18} provides a concrete example that shows HB could have a peak effect during the initial stage and hence prevents from obtaining an accelerated linear rate in the first few iterations. Indeed the acceleration results (of other problems) in \citet{WLA21} all have the form
$\| w_t - w_* \| \leq C \sqrt{\kappa} \left( 1 - \Theta\left( \frac{1}{\sqrt{\kappa}} \right)\right)^t \| w_0 - w_*\|$, for some constant $C \geq 1$. The upper-bound of the convergence rate is smaller than the initial distance only after certain number of iterations.
We also remark that if one wants to relate the distance $\| w_{t_0} - w_*\|$ in Theorem~\ref{thm:meta} to the initial distance $\| w_0 - w_*\|$, then one can use the fact the Lyapunov function $V_{t}$ decays exponentially 
and the triangular inequality to show that $\| w_{t_0} - w_*\|$ is within a constant multiple of 
$\| w_0 - w_* \|$ in the worst case. Furthermore, as $\mu$-PL implies
the quadratic growth condition with the same constant, i.e., $\frac{\mu}{2} \| w_{t_0} -w_* \|^2 \leq f(w_{t_0}) - f(w_*)$ (see e.g., \citep{KNS16}), one can also derive a bound on $\| w_{t_0} - w_*\|$ by using the quadratic growth condition and the fact that $V_t$ 
decays exponentially.

Finally we remark that Theorem~\ref{thm:meta} also finds that
the momentum parameter should be set adaptively. It basically says that if the momentum parameter is $\beta_{t}=\left(1 - \Theta\left( \frac{1}{\sqrt{\kappa_t}}\right) \right)^2$, then the instantaneous rate is 
$1 -~ \Theta\left( \frac{1}{\sqrt{\kappa_t}}\right) $ after some number of iterations,
assuming all the constraints are satisfied. 
Let $\lambda_{{*}} = \min_{t} \lambda_{\min}(H_t)$ and $\kappa:= L / \lambda_{*}$.
It implies that the instantaneous rate can be much smaller than $1 - \Theta\left( \frac{1}{\sqrt{{\kappa}}}\right) $ since $\kappa\geq \kappa_{t}$.
In other words, there is an advantage when the momentum parameter is set adaptively and appropriately over the iterations.
Nevertheless, for the completeness, we present the following corollary for the case when a constant value of  $\beta$ is used.
\begin{corollary} (\textbf{Constant $\beta$}) \label{cor}
Suppose assumption $\clubsuit$ holds.
Denote $\kappa:= \frac{L}{\lambda_*}$ for some constant $\lambda_{*}>0$. 
Let $\theta$ be that in Theorem~\ref{thm:meta}.
Set the step size $\eta = \frac{c_{\eta} }{L}$
for any constant $c_{\eta} \in (0,1]$
and set the momentum parameter 
$\beta = \left(1- \frac{ c \sqrt{c_{\eta} } }{ \sqrt{ \kappa} } \right)^2  $ for some $c \in (0,1)$
satisfying 
$\beta \leq \sqrt{ \left( 1 -  \tilde{c} c_{\eta}^2 \frac{ \mu}{L} \right)  
\left( 1- 4 c_{\mu} \frac{ \mu }{L} \right) }$
for some constant $\tilde{c} \in (0,1]$ and $c_{\mu} \in (0,\frac{1}{4}]$.
Then, \eqref{eq:V} holds for all $t$.
Furthermore, 
if HB satisfies \avg{\lambda_*}{w_*} and co-diagonalization $\spadesuit$ for all $t$,
then 
there exists a time $t_{0} = \tilde{\Theta} \left( \frac{L}{\mu} \right)$
such that for all $t \geq t_{0}$, the instantaneous rate $\| \Psi_t \|_2$ at $t$ is
$
\| \Psi_t \|_2 = 1 - \frac{c \sqrt{c_{\eta}}}{2} \frac{1}{\sqrt{\kappa}} = 1 - \Theta\left( \frac{1}{\sqrt{\kappa} }  \right)$. 
Consequently, 
$\| w_{T+1} - w_* \| = 
O\left( \left(  1 - \Theta\left(\frac{1}{ \sqrt{\kappa} } \right)\right)^{T-t_0}  \right) \| w_{t_0} - w_* \|.$
\end{corollary}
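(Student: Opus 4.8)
The plan is to treat Corollary~\ref{cor} as the constant-momentum specialization of Theorem~\ref{thm:meta}, so that essentially the same machinery applies with $\lambda_{\min}(H_t)$ replaced throughout by its uniform lower bound $\lambda_*$, and with the simplification that $\beta_t\equiv\beta$ makes every $\phi_{t,i}$ correction term in Lemma~\ref{vb} vanish (the indicator $\mathbbm{1}\{\beta_t\neq\beta_{t-1}\}$ is identically zero). First I would establish the decay \eqref{eq:V} of the Lyapunov function. The corollary's constraint $\beta\le\sqrt{(1-\tilde c c_\eta^2\mu/L)(1-4c_\mu\mu/L)}$ is a sufficient condition for the one required by Theorem~\ref{thm:PL}: using $\theta/2=\tfrac{L}{4}(1+\tfrac1{c_\eta})-c_\mu\mu$ one gets $\tfrac{L}{4}(1+\tfrac1{c_\eta})+\tfrac\theta2=\tfrac{L}{2}(1+\tfrac1{c_\eta})-c_\mu\mu\ge \tfrac{L}{4}$ (since $c_\eta\le1$ and $c_\mu\mu\le\mu/4\le L/4$, the last because $\mu\le L$ for an $L$-smooth $\mu$-PL function), whence $1-4c_\mu\mu/L\le 1-c_\mu\mu/(\tfrac L4(1+\tfrac1{c_\eta})+\tfrac\theta2)$. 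Invoking Theorem~\ref{thm:PL} then yields \eqref{eq:V} directly, and since $V_t\ge\theta\|w_t-w_{t-1}\|^2$ we obtain $\|w_t-w_{t-1}\|^2\le(V_0/\theta)(1-\Theta(\mu/L))^t$, i.e. consecutive iterates contract geometrically.

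Next I would feed this into the instantaneous-rate bound. By Lemma~\ref{lem:lambdadiff}, $|\lambda_{t,i}-\lambda_{t-1,i}|\le L_H\|w_t-w_{t-1}\|\le L_H\sqrt{V_0/\theta}\,(1-\Theta(\mu/L))^{t/2}$, which decays geometrically. I must first verify the hypothesis $1\ge\beta>(1-\sqrt{\eta\lambda_{t,i}})^2$ of Lemma~\ref{vb}: because $(1-\sqrt{\eta\lambda})^2$ is decreasing in $\lambda$ on $\eta\lambda\in[0,1]$, its maximum over $i$ is attained at $\lambda_{\min}(H_t)\ge\lambda_*$, and since $c<1$ we have $\beta=(1-c\sqrt{\eta\lambda_*})^2>(1-\sqrt{\eta\lambda_{\min}(H_t)})^2$. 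With $\beta$ constant the two extra factors in Lemma~\ref{vb} collapse to $\sqrt{1+\delta_{t,i}}$, where $\delta_{t,i}$ is $\eta|\lambda_{t,i}-\lambda_{t-1,i}|$ divided by $(1+\sqrt\beta)^2-\eta\lambda_{t,i}$ (Case I) or by $\eta\lambda_{t,i}-(1-\sqrt\beta)^2$ (Case II).

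I would then lower-bound these denominators so that $\delta_{t,i}$ itself decays geometrically. In Case I the denominator is $\Theta(1)$, since $\sqrt\beta\to1$ makes $(1+\sqrt\beta)^2\approx4$ while $\eta\lambda_{t,i}\le\eta L=c_\eta\le1$. In Case II, $(1-\sqrt\beta)^2=c^2\eta\lambda_*$ and $\eta\lambda_{t,i}\ge\eta\lambda_*$, so the denominator is at least $\eta\lambda_*(1-c^2)>0$ — here $c<1$ is essential — of order $\Theta(1/\kappa)$; this is the binding case. Using $\sqrt{1+\delta}\le1+\delta/2$, the extra factor is $1+\Theta(\delta_{t,i})$, and since $\sqrt\beta=1-c\sqrt{c_\eta}/\sqrt\kappa$, there is a burn-in time $t_0=\tilde\Theta(L/\mu)$ — chosen so that $(1-\Theta(\mu/L))^{t_0/2}$ falls below the target $\Theta(1/\sqrt\kappa)$, which is what produces the logarithmic factor in $L_H,V_0,\theta$ — beyond which $\sqrt\beta\cdot(\text{extra factor})\le1-\tfrac{c\sqrt{c_\eta}}{2}\tfrac1{\sqrt\kappa}$, giving \eqref{eq:psi} with $\kappa_t$ replaced by $\kappa$. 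Finally I would telescope \eqref{eq:multi} from $T$ down to $t_0$: the product $\prod_{t=t_0+1}^T\|\Psi_t\|_2\le(1-\Theta(1/\sqrt\kappa))^{T-t_0}$, and the boundary factors $\|P_T\|_2$ and $\|D_{t_0}P_{t_0}^{-1}\|_2$ are bounded by a constant multiple of $\sqrt\kappa$ (absorbed into the $O(\cdot)$), yielding the stated bound on $\|w_{T+1}-w_*\|$ in terms of $\|w_{t_0}-w_*\|$.

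The step I expect to be the main obstacle is controlling the Case~II extra factor and pinning down $t_0$ precisely. Because its denominator is only $\Theta(1/\kappa)$, I need the consecutive-iterate distance to have decayed far enough that $L_H\|w_t-w_{t-1}\|/(\lambda_*(1-c^2))$ drops below $\Theta(1/\sqrt\kappa)$; solving this for $t$ is exactly what forces the $\tilde\Theta(L/\mu)$ burn-in and requires carefully tracking how $L_H$, $V_0$, $\theta$, and the constants $c,c_\eta$ enter the logarithmic factor, while simultaneously checking that $\sqrt\beta\cdot(1+\delta_{t,i}/2)$ genuinely lands at or below $1-\tfrac{c\sqrt{c_\eta}}{2}/\sqrt\kappa$ rather than merely being asymptotically comparable to it.
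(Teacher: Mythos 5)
Your proposal is correct and follows essentially the same route as the paper: the corollary is obtained by specializing the proof of Theorem~\ref{thm:meta} to constant $\beta$, where the paper itself notes that the $\phi_{t,i}$ terms vanish (the indicator $\mathbbm{1}\{\beta_t\neq\beta_{t-1}\}$ is zero) and that the denominator bounds \eqref{eq:l1}--\eqref{eq:l2} still hold with $\lambda_{t,d}$ replaced by $\lambda_*$, while the Lyapunov decay and the burn-in time come from Theorem~\ref{thm:PL} and Lemma~\ref{lem:lambdadiff} exactly as you describe. Your explicit verification that the corollary's simpler constraint on $\beta$ implies the one in Theorem~\ref{thm:PL} matches the inequality \eqref{22} argument in the main text, so there is no gap.
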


We conclude this subsection by pointing out that our presentation of the theoretical results aims at showing acceleration of HB can be achieved under various choices of the parameters. One can, for example, simply set the step size as $\eta = \frac{1}{L}$
and set the momentum parameter as
$\beta_t = \left(1- 0.9 \frac{ 1}{ \sqrt{\kappa_t} } \right)^2 $
or a constant $\beta = \left(1- 0.9 \frac{1 }{ \sqrt{\kappa }} \right)^2$
in HB (which corresponds to $c=0.9$ 
\footnote{The parameter $c$ cannot be set to $1$, because our proof in Appendix~\ref{app:thm:meta} shows that the \emph{logarithmic} factor in $t_{0}= \tilde{\Theta}\left(\frac{L}{\mu} \right)$ depends on how close $c$ to $1$ is.
More precisely, as $c$ approaches $1$, the denominator terms in the extra factors shown in Lemma~\ref{vb} could become zero.
} 
and $c_{{\eta}}=1$).
Under this choice of the parameters, 
HB exhibits acceleration after $t_{0}= \tilde{\Theta}\left(\frac{L}{\mu} \right)$ as guaranteed by
Theorem~\ref{thm:meta} or Corollary~\ref{cor}.

\subsection{\ao implies PL}

In the subsection, we show that if $\ao$ holds at a point $w$ then PL also holds at $w$.

\begin{theorem} \label{lem:AOPL}
Assume $f(\cdot)$ is $L$-smooth and twice differentiable. 
Consider a point $w \in \reals^{d}$. Suppose there exists a global optimal point $w_{*}$ of $f(\cdot)$
such that the condition \avg{\lambda_*}{w_*} holds at $w$.
Then, $f(\cdot)$ satisfies $\mu$-PL with parameter $\mu = \frac{\lambda_*^2}{L}$ at $w$
(, \textbf{which can be improved to $\mu=\lambda_*$ with a refined analysis}
\footnote{In a personal communication, Sinho Chewi pointed out that
\av{\lambda_*} actually implies $\lambda_*$-PL, which is an improved result compared to Theorem~\ref{lem:AOPL}. Please see Section~\ref{app:sinho} for the details.}).

\end{theorem}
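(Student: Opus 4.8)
The plan is to chain together three elementary facts: the fundamental theorem of calculus, the averaged-out assumption, and $L$-smoothness. The target is the PL inequality $\|\nabla f(w)\|^2 \geq 2\mu\,(f(w) - \min_w f(w))$ at $w$ with $\mu = \lambda_*^2/L$, so I would first produce a lower bound on $\|\nabla f(w)\|$ in terms of $\|w - w_*\|$, and then trade $\|w - w_*\|$ for the optimality gap. To begin, I would invoke the fundamental theorem of calculus exactly as in \eqref{eq1}: since $w_*$ is a global minimizer we have $\nabla f(w_*) = 0$, hence
\[
\nabla f(w) = \nabla f(w) - \nabla f(w_*) = \left( \int_0^1 \nabla^2 f(\theta w + (1-\theta) w_*)\, d\theta \right)(w - w_*) = H_f(w)(w - w_*),
\]
where the last equality uses the definition \eqref{avgH} of the average Hessian.

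Next I would exploit the averaged-out assumption. The matrix $H_f(w)$ is symmetric, being an integral of the symmetric matrices $\nabla^2 f(\cdot)$, so the hypothesis \avg{\lambda_*}{w_*} at $w$ gives $\lambda_{\min}(H_f(w)) \geq \lambda_* > 0$, i.e.\ $H_f(w) \succeq \lambda_* I_d$ and therefore $H_f(w)^2 \succeq \lambda_*^2 I_d$. Squaring the norm in the identity above then yields the gradient lower bound
\[
\|\nabla f(w)\|^2 = (w - w_*)^\top H_f(w)^2 (w - w_*) \geq \lambda_*^2 \, \|w - w_*\|^2 .
\]

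Finally I would convert the distance into the optimality gap using $L$-smoothness. Applying the standard descent-type inequality with base point $w_*$ and again using $\nabla f(w_*) = 0$ gives $f(w) - f(w_*) \leq \tfrac{L}{2}\|w - w_*\|^2$, equivalently $\|w - w_*\|^2 \geq \tfrac{2}{L}\bigl(f(w) - f(w_*)\bigr)$, where $f(w_*) = \min_w f(w)$ since $w_*$ is a global minimizer. Combining this with the previous display gives
\[
\|\nabla f(w)\|^2 \geq \lambda_*^2 \|w - w_*\|^2 \geq \frac{2 \lambda_*^2}{L}\bigl(f(w) - \min_w f(w)\bigr),
\]
which is exactly $\mu$-PL at $w$ with $\mu = \lambda_*^2/L$. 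There is no serious obstacle in this argument; it is a short chain of inequalities, and the only point requiring a moment of care is observing that $H_f(w)$ is symmetric so that the eigenvalue bound passes to $H_f(w)^2$. I would note that this route is precisely the one that yields the (suboptimal) constant $\lambda_*^2/L$, consistent with the footnote remark that a sharper argument recovers $\lambda_*$-PL; obtaining that improvement would require bypassing the $L$-smoothness step above rather than bounding $\|w - w_*\|$ by the optimality gap.
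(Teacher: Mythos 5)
Your proposal is correct and follows essentially the same route as the paper's proof: the fundamental theorem of calculus to write $\nabla f(w) = H_f(w)(w-w_*)$, the eigenvalue lower bound $\|\nabla f(w)\| \geq \lambda_* \|w-w_*\|$, and the $L$-smoothness bound $f(w) - f(w_*) \leq \frac{L}{2}\|w-w_*\|^2$ chained together to get $\mu = \lambda_*^2/L$. The only cosmetic difference is that you phrase the middle step via $H_f(w)^2 \succeq \lambda_*^2 I_d$ rather than $\|H_f(w)v\| \geq \lambda_{\min}(H_f(w))\|v\|$, which is the same fact.
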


\begin{proof}
If \avg{\lambda_*}{w_*} holds at $w$, then
\begin{equation} \label{eq111}
\begin{aligned} 
\textstyle \frac{ \| \nabla f(w) \| }{ \| w - w_* \|   }
&\textstyle  = \frac{ \| \left( \int_0^1 \nabla^2 f( (1-\theta) w + \theta w_*) d\theta  \right) (w-w_*) \| }{ \| w - w_*\|  }
\textstyle  \geq \lambda_*,
\end{aligned}
\end{equation}
where the equality uses $\nabla f(w_*)=0$ and the fundamental theorem of calculus,
and the inequality is due to $\| H_f(w) v \|_{2} \geq \lambda_{{\min}}(H_f(w)) \| v \|_2$ for any $v \in \reals^d$ and the assumption that $\ao{\lambda_*}$ holds.
Combining this and the smoothness, we have
\begin{equation}
\begin{split}
\textstyle  f(w) 
 & \textstyle  \leq f(w_*) + \frac{L}{2} \| w- w_* \|^2 \leq f(w_*) + \frac{L}{2 \lambda_*^2} \| \nabla f(w)\|^2,
\end{split}
\end{equation}
where the last inequality uses \eqref{eq111}. This shows
$f(\cdot)$ satisfies $\mu$-PL with parameter $\mu = \frac{\lambda_*^2}{L}$ at $w$.
\end{proof}
We remark that in the lemma $\lambda_{*} \geq \frac{\lambda_*^2}{L} =\mu$ since $L \geq \lambda_{*}$.

\section{Examples of $\ao$}

\subsection{Example 1}

If a twice differentiable function $f(\cdot)$ is $\nu$-strongly convex, then evidently the smallest eigenvalue of the average Hessian is at least $\nu$. Thus, \av{\lambda_*} holds with $\lambda_{*}= \nu$. 


\begin{figure}[t]
     \includegraphics[width=0.23\textwidth]{./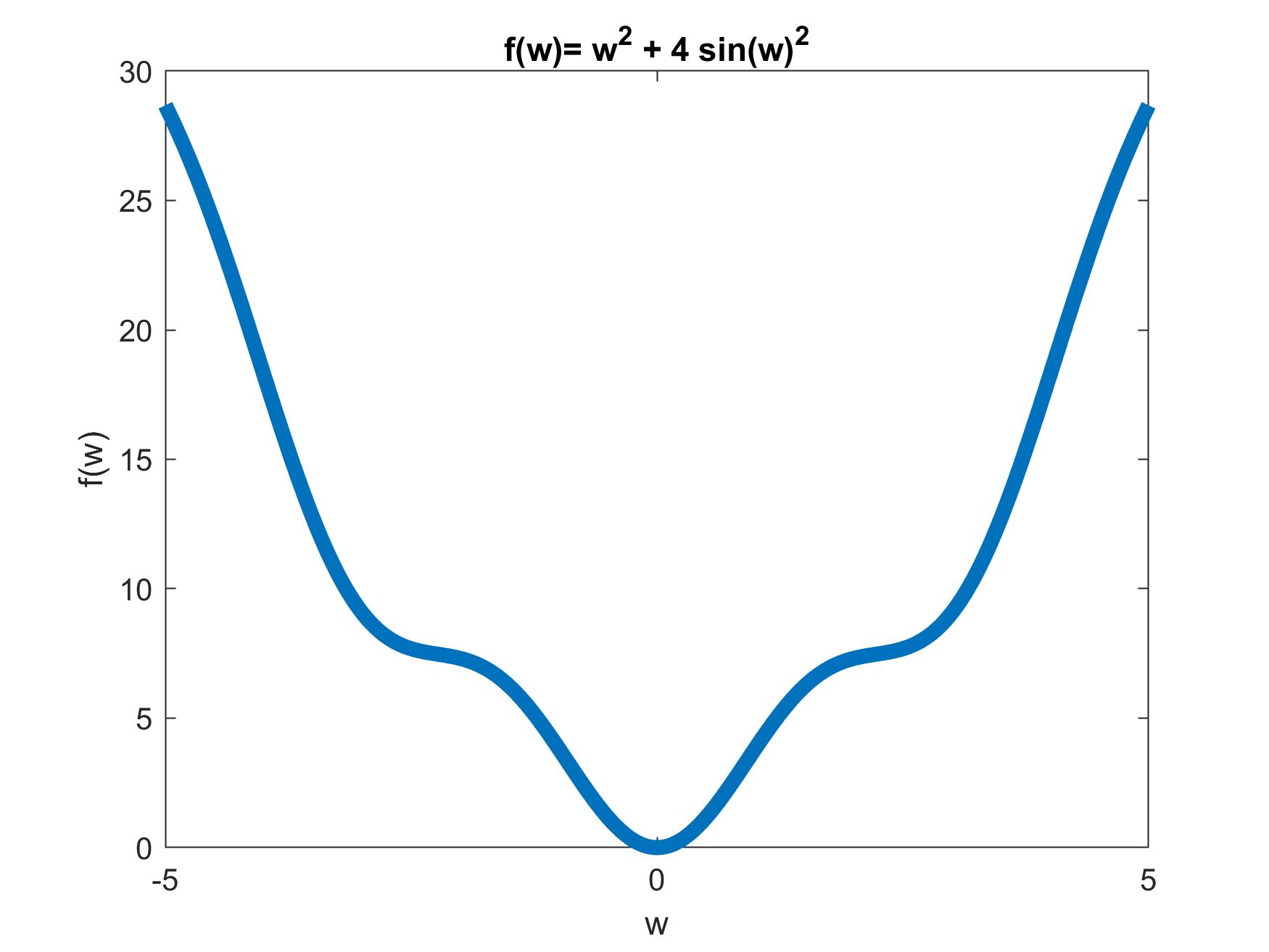}
     \includegraphics[width=0.23\textwidth]{./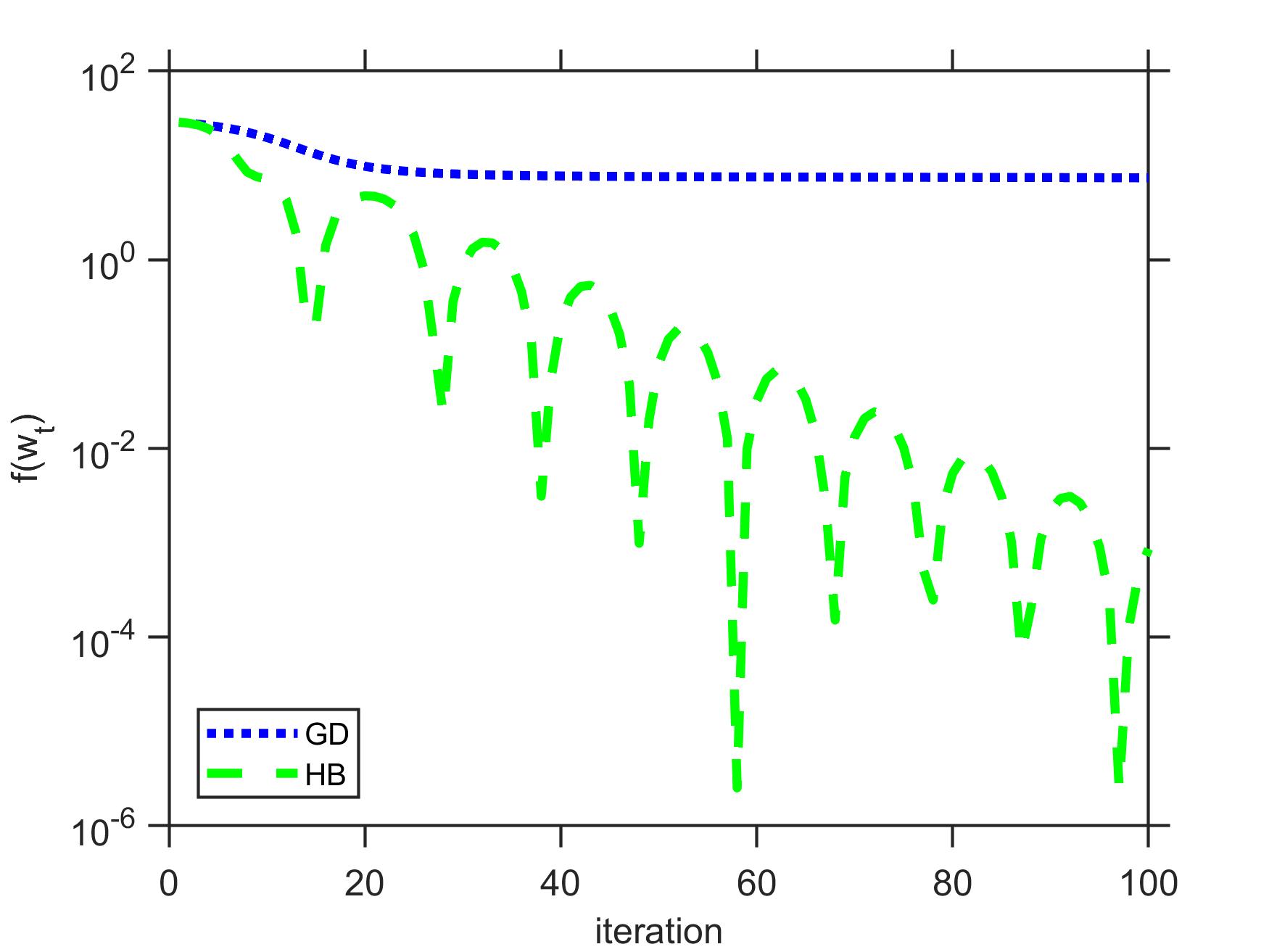}
     \caption{\footnotesize  (Left) function value $f(w)=w^{2}+ 4 sin^{2}(w)$ v.s. $w$.
     (Right) function value $f(w_t)$ v.s. iteration $t$ of
     solving $\min_{w} f(w)$ by GD and HB. Both algorithms were initialized at $w=-5$.} \label{fig:syn}
\end{figure}

\subsection{Example 2}
Consider minimizing
\begin{equation} \label{obj:syn}
f(w) := w^2 + \alpha \sin^2(w), 
\end{equation}
where $\alpha > 0$ is a problem parameter.
The function $f(w)$ was considered in \citet{KNS17}, where the authors claim that it satisfies PL.
Figure~\ref{fig:syn} plots the function and compares HB to GD for minimizing $f(w)$ in \eqref{obj:syn}. One can see from the sub-figure on the left that the function is non-convex, but the average Hessian towards the optimal point appears to be positive definite --- the non-convexity could be averaged out, which suggests that HB can converge faster than GD according to our main theorem. The sub-figure on the right confirms this empirically, while the following lemma provably shows that the \ao condition indeed holds.

\begin{lemma} \label{lem:syn}
Let $\alpha \in (1,4.34]$.
The function $f(w)$ in \eqref{obj:syn} is twice differentiable and has $L=2+2\alpha$-Lipschitz, $L_H= 4 \alpha$-Lipschitz Hessian, and its global optimal point is $w_{*}=0$.
Furthermore, it is non-convex but satisfies $\mu$-PL 
with $\mu =\frac{ \left(2+ \frac{sin(2w)}{w} \alpha\right)^2}{2+2\alpha} \geq \frac{ \left( 2- 0.46 \alpha \right)^2}{2+2 \alpha}$ at any $w$ and the average Hessian is $H_f(w) = 2 + \frac{sin(2w)}{w} \alpha > 0$, where $|\frac{sin(2w)}{w}| \leq 0.46$. 
Therefore, the condition \avg{\lambda_*}{w_*} holds with $\lambda_{*} = 2-0.46 \alpha$. 
\end{lemma}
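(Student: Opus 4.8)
The plan is to reduce every claim to elementary computations with the first three derivatives of $f$ and to obtain the PL statement as a pointwise application of Theorem~\ref{lem:AOPL}; the only non-routine ingredient is a sharp lower bound on a sinc-type quantity, which is also where the range of $\alpha$ enters. First I would compute $f'(w) = 2w + \alpha\sin(2w)$, $f''(w) = 2 + 2\alpha\cos(2w)$, and $f'''(w) = -4\alpha\sin(2w)$, from which twice differentiability (in fact smoothness) is immediate. Since $|\cos(2w)| \le 1$, we get $|f''(w)| \le 2 + 2\alpha$, so $\nabla f$ is $(2+2\alpha)$-Lipschitz and $L = 2 + 2\alpha$; since $|f'''(w)| = 4\alpha|\sin(2w)| \le 4\alpha$, the Hessian is $4\alpha$-Lipschitz and $L_H = 4\alpha$. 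Because $f(w) = w^2 + \alpha\sin^2(w) \ge 0$ with equality only at $w = 0$, the point $w_* = 0$ is the unique global minimizer, and non-convexity follows from a single negative-curvature point, e.g. $f''(\pi/2) = 2 - 2\alpha < 0$ precisely when $\alpha > 1$ --- which is exactly why the admissible range opens at $\alpha > 1$.

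Next I would use $w_* = 0$ to evaluate the average Hessian by direct integration, $H_f(w) = \int_0^1 f''(\theta w)\,d\theta = 2 + 2\alpha\int_0^1 \cos(2\theta w)\,d\theta = 2 + \alpha\frac{\sin(2w)}{w}$, matching the claimed formula; in one dimension this is the scalar satisfying $\nabla f(w) = H_f(w)(w - w_*)$. The PL conclusion then follows from Theorem~\ref{lem:AOPL} applied at each $w$ with $\lambda_* \gets H_f(w)$: whenever $H_f(w) > 0$, the function is $\mu$-PL at $w$ with $\mu = H_f(w)^2/L = (2 + \frac{\sin(2w)}{w}\alpha)^2/(2+2\alpha)$, exactly as stated.

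The main obstacle --- and the only step requiring real care --- is the uniform bound $\frac{\sin(2w)}{w} \ge -0.46$, which simultaneously certifies $H_f(w) \ge 2 - 0.46\alpha$, the condition \avg{\lambda_*}{w_*} with $\lambda_* = 2 - 0.46\alpha$, and the quoted lower bound on $\mu$. Substituting $x = 2w$ turns this into the sinc bound $\frac{\sin x}{x} \ge -0.23$, which I would prove rigorously (not numerically) as follows: by evenness restrict to $x > 0$; note $\frac{\sin x}{x} \ge 0$ on $(0,\pi]$ and $|\frac{\sin x}{x}| \le \frac{1}{x} \le \frac{1}{2\pi} < 0.23$ for $x \ge 2\pi$, so any dip below $-0.23$ must occur on the first negative lobe $(\pi,2\pi)$. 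There the unique interior critical point solves $\frac{d}{dx}\frac{\sin x}{x} = \frac{x\cos x - \sin x}{x^2} = 0$, i.e. $\tan x = x$, at $x \approx 4.4934$, where $\frac{\sin x}{x} \approx -0.2172 > -0.23$, while the endpoints give $0$; hence the minimum over $(\pi,2\pi)$ stays above $-0.23$, and so $\frac{\sin(2w)}{w} = 2\frac{\sin x}{x} \ge -0.46$ everywhere. To close the loop, the upper limit $\alpha \le 4.34$ is exactly what keeps $\lambda_* = 2 - 0.46\alpha > 0$ (since $2/0.46 \approx 4.348$), so $H_f(w) \ge 2 - 0.46\alpha > 0$ for all $w$ and the averaged-out condition genuinely holds; and since $2 + \frac{\sin(2w)}{w}\alpha \ge 2 - 0.46\alpha > 0$ with both sides positive, squaring preserves the inequality and gives $\mu \ge (2 - 0.46\alpha)^2/(2+2\alpha)$.
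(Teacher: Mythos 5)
Your proposal is correct and follows essentially the same route as the paper: compute the derivatives to get $L$, $L_H$, and non-convexity, integrate the Hessian along the segment to $w_*=0$ to obtain $H_f(w)=2+\alpha\frac{\sin(2w)}{w}$, invoke Theorem~\ref{lem:AOPL} pointwise for the PL constant, and reduce everything to the lower bound $\frac{\sin(2w)}{w}\ge -0.46$. The one place you improve on the paper is the sinc bound: the paper's Lemma~\ref{lem:sin} locates the critical points via $\tan(2x)=2x$ and then appeals to computer-aided evaluation of the second extremum, whereas you localize the minimum to the first negative lobe and evaluate the critical value there, which is a cleaner, more self-contained justification of the same numerical constant.
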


The proof of Lemma~\ref{lem:syn} is available in Appendix~\ref{app:lem:syn}.
Combining Lemma~\ref{lem:syn} and Corollary~\ref{cor}, 
one can show that
HB for optimizing \eqref{obj:syn} has an instantaneous rate $1 - \Theta\left(\frac{1}{\sqrt{\kappa}} \right) $ at all $t \geq t_{0}$ for some number $t_{0}$, where $\kappa = \frac{(2+2\alpha)^2}{(2-0.46\alpha)^2}$.
It is noted that Lemma~\ref{lem:syn} also shows that $\lambda_{*}  = \Theta(\mu)$.
So the accelerated linear rate $1 - \Theta\left(\frac{1}{\sqrt{\kappa}} \right)$
in the proposition is considered to be better than $1 - \Theta\left(\frac{\mu}{L} \right)$.

We can generalize the above case to a broader class of problems as follows.

\begin{lemma} \label{lem:broad}
Consider solving
\[
\textstyle
\min_{w \in \reals^d} F(w) := \min_{w \in \reals^d} f(w) + g(w).
\]
Suppose the function $F(w)$ satisfies assumption $\clubsuit$.
Denote $w_{*}$ a global minimizer of $F(w)$.
Assume that $f(w)$ is $2 \lambda_{*}$-strongly convex for some $\lambda_{*}>0$. If the average Hessian of the possibly non-convex $g(w)$ between $w$ and $w_{*}$ satisfies $\lambda_{{\min}}(H_g(w)) \geq - \lambda_{*}$. Then, $F(\cdot)$ satisfies \avg{\lambda_*}{w_*}.
\end{lemma}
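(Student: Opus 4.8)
The plan is to exploit the linearity of the Hessian map together with the definition of the average Hessian in \eqref{avgH}. Since the Hessian operator is linear in the function, we have $\nabla^2 F = \nabla^2 f + \nabla^2 g$ at every point along the segment joining $w$ and $w_*$. Integrating this identity over $\theta \in [0,1]$ immediately yields the additive decomposition
\[
H_F(w) = H_f(w) + H_g(w),
\]
so the entire task reduces to lower-bounding $\lambda_{\min}$ of a sum of two symmetric matrices whose minimum eigenvalues we can each control.

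First I would bound $\lambda_{\min}(H_f(w))$ from below using strong convexity. Because $f$ is twice differentiable and $2\lambda_*$-strongly convex, the equivalence recalled in the Preliminaries gives $\nabla^2 f(\cdot) \succeq 2\lambda_* I$ at every point. The one technical subtlety is that $\lambda_{\min}$ is not linear, so I cannot simply pull it through the integral. Instead I would argue via quadratic forms: for any unit vector $v$,
\[
v^\top H_f(w) v = \int_0^1 v^\top \nabla^2 f\bigl(\theta w + (1-\theta) w_*\bigr) v \, d\theta \geq \int_0^1 2\lambda_* \, d\theta = 2\lambda_*,
\]
which shows $H_f(w) \succeq 2\lambda_* I$, i.e.\ $\lambda_{\min}(H_f(w)) \geq 2\lambda_*$. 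For the term $H_g(w)$, the hypothesis directly supplies $\lambda_{\min}(H_g(w)) \geq -\lambda_*$.

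Finally I would combine the two bounds through Weyl's inequality for the minimum eigenvalue of a sum of symmetric matrices, $\lambda_{\min}(A+B) \geq \lambda_{\min}(A) + \lambda_{\min}(B)$, applied with $A = H_f(w)$ and $B = H_g(w)$. This gives
\[
\lambda_{\min}(H_F(w)) \geq \lambda_{\min}(H_f(w)) + \lambda_{\min}(H_g(w)) \geq 2\lambda_* - \lambda_* = \lambda_* > 0,
\]
which is exactly the statement that $F(\cdot)$ satisfies \avg{\lambda_*}{w_*}. I do not expect a genuine obstacle here: the proof is short and structural, and the only points requiring care are the quadratic-form justification for passing the eigenvalue bound through the integral (so that one never treats $\lambda_{\min}$ as an additive functional on matrices) and the invocation of Weyl's subadditivity. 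It is also worth noting in passing that $H_g$ need not be positive semidefinite, so $g$ is allowed to be genuinely non-convex; the strong convexity of $f$ is precisely what absorbs this non-convexity on average.
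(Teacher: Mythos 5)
Your proof is correct and follows essentially the same route as the paper's: both decompose the average Hessian additively as $H_F(w) = H_f(w) + H_g(w)$, bound the strongly convex part below by $2\lambda_* I$ and the other by $-\lambda_* I$, and conclude $\lambda_{\min}(H_F(w)) \geq \lambda_*$. The paper simply writes these matrix inequalities directly in the Loewner order, whereas you make the quadratic-form argument and the Weyl subadditivity step explicit; this is a more careful write-up of the same argument.
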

The lemmas says that if a possibly non-convex function $F(\cdot)$ can be decomposed into a strongly convex part and a possibly non-convex part, and if the average Hessian towards $w_{*}$ of the non-convex part is not too negative-definite, then \ao holds.

\subsection{Example 3}

Following \citet{GSD20,GBL19}, we consider minimizing the following non-convex function:
\begin{equation} \label{deep}
f(u) = \frac{1}{N^2} \| u^{\odot N} - w_* \|^2,
\end{equation}
where $w_{*} \in \reals_{+}^{d}$,
$N$ is a positive integer,
and the notation $u^{\odot N} \in \reals^{d}$ represents a vector whose $i_{\mathrm{th}}$ element is $(u_{{i}})^N$.
The $N$-layer diagonal network $u^{\odot N}$
has been a subject for studying incremental learning of GD \citep{GSD20} and the interaction between the scale of the initialization and generalization \citep{WGSMGLSS20}. On the applied side, using the diagonal network helps achieve sparse recovery without the need of tuning regularization parameters under certain conditions \citep{VKR19}.
It is noted that for $N=2$, the set of global optimal points of \eqref{deep} is
$\mathcal{U}:= \{ \hat{u} \in \reals^d:  \hat{u}_i = \pm \sqrt{w_{*,i}}  \}.$
The following lemma shows that the PL condition and \ao hold \emph{locally} (but not globally).

\begin{lemma} \label{lem:deep}
($N=2$).
The function $f(u)$ in (\ref{deep}) is twice differentiable.
It is $L=3R^{2}$-smooth and has $L_{H}=6R$-Lipschitz Hessian
in the ball $\{ u: \|u\|_{\infty} \leq R\}$ for any finite $R>0$.
Furthermore, $f(u)$ is $\mu$-PL with $\mu = 2 \min_{i\in [d]} (u_i)^2$ 
at $u$, and the average Hessian $H_{f}(u)$ between $u$ and $\hat{u} \in \mathcal{U}$
satisfies $\lambda_{{\min}}( H_f(u) ) = \min_{i \in [d]} \left( (u_i)^{2} + u_i \hat{u}_i \right)$.
\end{lemma}

\begin{figure}[t]
\subfigure[Average Hessian]{\label{fig:a}\includegraphics[width=.48\linewidth]{./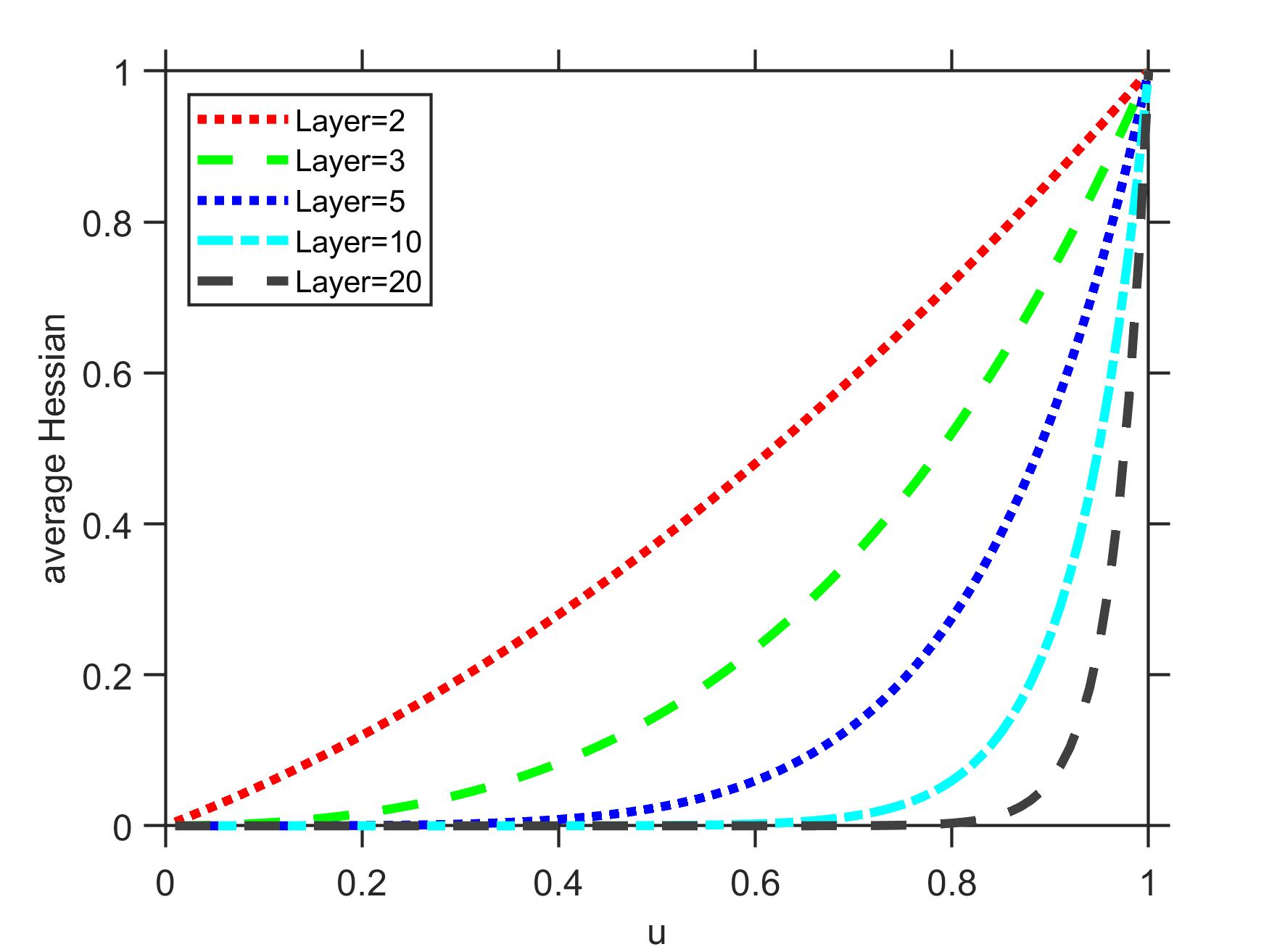}}
\subfigure[Obj. value vs. $t$]{\label{fig:b}\includegraphics[width=.48\linewidth]{./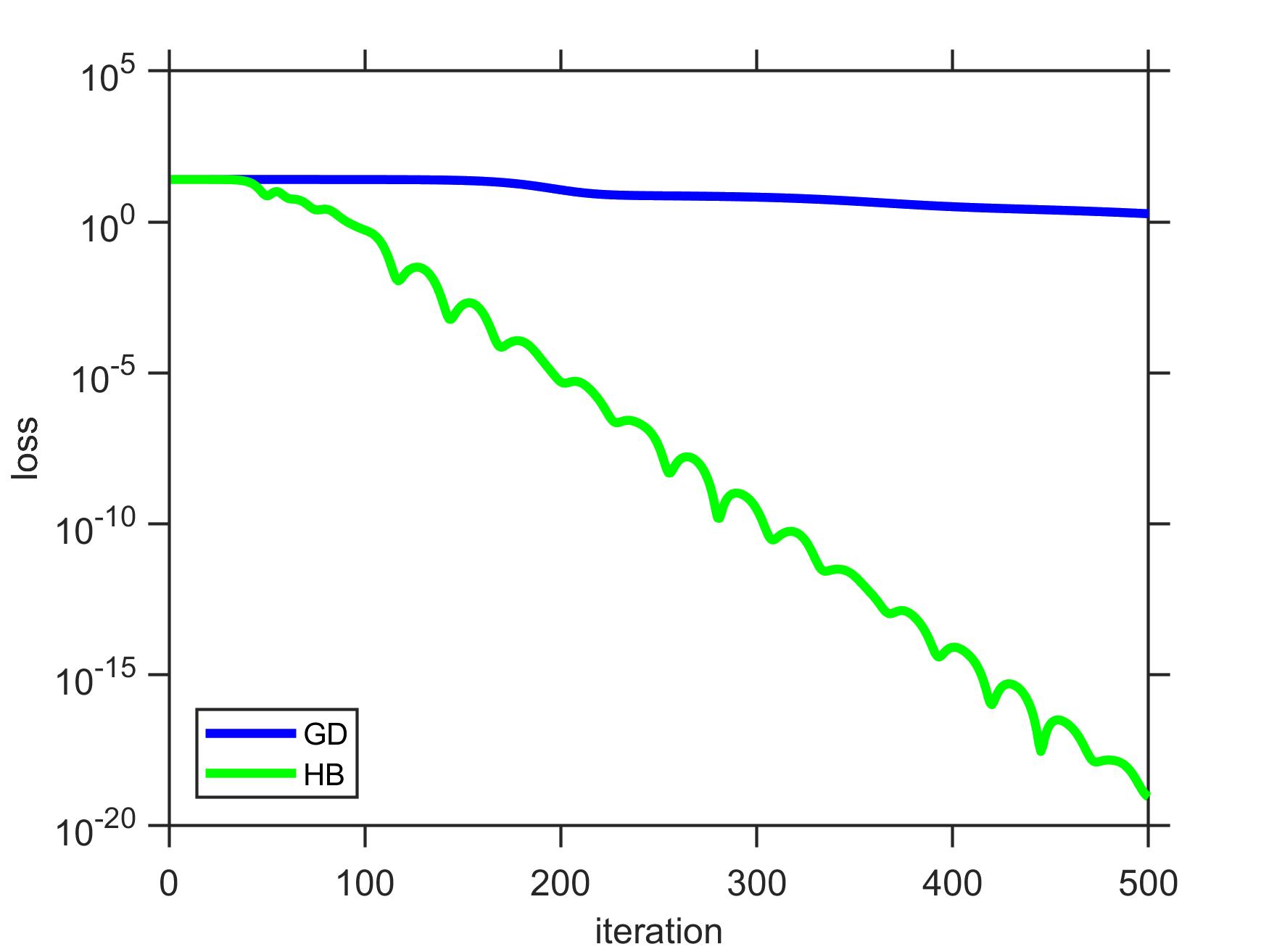}}
     \caption{ \footnotesize
     Subfigure (a): Average Hessian $H_{f}(u)$ between $u$ and $w_*=1$, where layer $N= \{ 2,3,5,10,20 \}$. A pattern is that adding more layer causes the need of iterate $u$ to be more closer to $w_*$ to have $\lambda_{\min}(H_{f}(u))$ sufficiently bounded away from $0$.
     Subfigure (b): Solving \eqref{deep} with $N=2$ by GD and HB.
     Both algorithms were initialized at $u = 10^{-2} \mathbf{1} \in \reals_{+}^{4} $ and the target is $w_*=[12, 6, 4, 3]^\top$, following the setup of \citet{GSD20}. }
  \label{fig:diag}
\end{figure}

Let $\sqrt{w_{*}}$ be the element-wise square root of $w_{*}$, which is one of the global minimizers. Then, Lemma~\ref{lem:deep} suggests that for all $u \in \reals^{d}_{+}$, the non-convexity w.r.t. $\sqrt{w_*}$ is averaged-out (and also PL holds).
Similar argument can be applied to any other global optimal points $\hat{u} \in \mathcal{U}$ in the sense that there exists a region where all the points inside the region satisfy PL and the non-convexity w.r.t. the corresponding global optimal point is averaged-out.
Lemma~\ref{lem:deep} implies that PL and \ao hold at all the points except those in 
$\mathcal{V} := \{ u \in \reals^d:  \exists i \in[d], s.t. \text{ } u_i = 0  \}.$
\begin{lemma} \label{lem:stay}
Let the initial point $u_0 = \alpha \mathbf{1} \in \reals^{d}_{+}$, where $\alpha$ satisfies $0< \alpha < \min_{i \in [d]} \sqrt{ w_{{*,i}} }$. Denote $L:= ~6 \max_{i \in [d]} w_{*,i}$.
Let $\eta = \frac{c_{\eta}}{L }$, where $c_{{\eta}} \in (0,1]$, and $\beta_{t} \leq \sqrt{ (1-\frac{2c_{\eta}^2 \tilde{c} \alpha^2 }{L} )\left( 1- 4 c_{\mu} \frac{ 2 \alpha^2 }{L} \right) } $ for any $\tilde{c} \in (0,1]$ and $c_{{\mu}} \in (0,\frac{1}{4}]$.
Then, for minimizing \eqref{deep}, the iterate $u_{t}$ generated by HB satisfies $0 < \alpha < u_{{t,i}} < \sqrt{2 w_{*,i} - \alpha^2 }$ for all $t$ and all $i \in [d]$.
\end{lemma}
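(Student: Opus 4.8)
My plan is to exploit the separable structure of \eqref{deep}. Since $f(u)=\tfrac14\sum_i (u_i^2-w_{*,i})^2$ is a sum of one-variable functions $\phi_i(u):=\tfrac14(u^2-w_{*,i})^2$, the HB recursion decouples across coordinates, and the Lyapunov function of Theorem~\ref{thm:PL} splits as $V_t=\sum_i V_t^{(i)}$ with $V_t^{(i)}:=\phi_i(u_{t,i})+\theta(u_{t,i}-u_{t-1,i})^2$. The geometry I will use is the elementary level-set identity $\phi_i(u)\le \phi_i(\alpha)=\tfrac14(w_{*,i}-\alpha^2)^2 \iff \alpha^2\le u^2\le 2w_{*,i}-\alpha^2$, so the target interval is exactly the positive branch of the sublevel set $\{\phi_i\le\phi_i(\alpha)\}$ and it contains the minimizer $\sqrt{w_{*,i}}$ since $\sqrt{2w_{*,i}-\alpha^2}>\sqrt{w_{*,i}}>\alpha$.

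The argument is an induction showing that $u_{s,i}$ stays in $[\alpha,\sqrt{2w_{*,i}-\alpha^2}]$ for all $s\le t$, with strict interior for $s\ge 1$. Assuming this through time $t$, every iterate lies in the box $\{\|u\|_\infty\le R\}$ with $R=\sqrt{2\max_j w_{*,j}}$, on which Lemma~\ref{lem:deep} gives smoothness constant $3R^2=6\max_j w_{*,j}=L$ and, because $u_{s,i}\ge\alpha$, a uniform PL constant $\mu=2\alpha^2$. I first verify that the momentum bound assumed in Lemma~\ref{lem:stay} implies the constraint of Theorem~\ref{thm:PL} with $\mu=2\alpha^2$: this uses $c_\eta\le 1$, the definition of $\theta$, and the fact that the true denominator $\tfrac{L}{4}(1+\tfrac1{c_\eta})+\tfrac\theta2$ is at least $L/4$, so replacing it by $L/4$ (which is what Lemma~\ref{lem:stay} does) only tightens the bound. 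The per-coordinate one-step inequality of Theorem~\ref{thm:PL} then yields $V_{t+1}^{(i)}\le (1-\tilde c\, c_\eta^2\tfrac{2\alpha^2}{L})V_t^{(i)}<V_0^{(i)}=\phi_i(\alpha)$; hence $\phi_i(u_{t+1,i})<\phi_i(\alpha)$ and, by the level-set identity, $\alpha^2<u_{t+1,i}^2<2w_{*,i}-\alpha^2$, which pins $|u_{t+1,i}|$ strictly inside $(\alpha,\sqrt{2w_{*,i}-\alpha^2})$.

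The main obstacle is ruling out a sign flip, i.e. excluding $u_{t+1,i}\le-\alpha$ (the negative branch of the same sublevel set), which is precisely what upgrades $|u_{t+1,i}|>\alpha$ to the claimed $u_{t+1,i}>\alpha>0$. The crucial coupling is that the same Lyapunov bound controls the velocity: $\theta(u_{t,i}-u_{t-1,i})^2\le \phi_i(\alpha)-\phi_i(u_{t,i})=\tfrac14(u_{t,i}^2-\alpha^2)(2w_{*,i}-\alpha^2-u_{t,i}^2)$, so the momentum shrinks to zero as $u_{t,i}$ approaches the lower boundary $\alpha$. I bound $\theta\ge L-\alpha^2\ge 5\max_j w_{*,j}$ and split into two cases. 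When $u_{t,i}\le\sqrt{w_{*,i}}$ the gradient term $\eta u_{t,i}(w_{*,i}-u_{t,i}^2)$ is nonnegative (it points up), so any downward move comes only from momentum; inserting the velocity bound into $u_{t+1,i}\ge u_{t,i}-|u_{t,i}-u_{t-1,i}|$ and squaring reduces the claim $u_{t+1,i}>-\alpha$ to $2\theta(u_{t,i}+\alpha)>(u_{t,i}-\alpha)w_{*,i}$, which holds comfortably given $\theta\ge 5w_{*,i}$. When $u_{t,i}>\sqrt{w_{*,i}}$ the iterate is far from the gap $(-\alpha,\alpha)$, and a crude bound on the gradient step ($\le \tfrac{\sqrt2}{6}\sqrt{w_{*,i}}$, using $u_{t,i}<\sqrt{2w_{*,i}}$ and $\eta=c_\eta/L$) together with the velocity bound keeps $u_{t+1,i}$ above a positive multiple of $\sqrt{w_{*,i}}$. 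Either way $u_{t+1,i}>0$, and combined with $|u_{t+1,i}|>\alpha$ this gives $u_{t+1,i}>\alpha$.

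Finally, the base case is immediate since $u_{0,i}=u_{-1,i}=\alpha$ lies in the closed interval, and the strict decrease of $V_t$ supplies the strict interior for $t\ge 1$. One technical point I would address along the way is the mild circularity in invoking the $L$-smooth descent estimate at the step $t\to t+1$: I would first bound $\|u_{t+1}-u_t\|$ directly from the invariant at time $t$ (via $\|u_{t+1}-u_t\|\le \eta\|\nabla f(u_t)\|+\beta_t\|u_t-u_{t-1}\|$, both terms controlled by the induction hypothesis) to certify that the segment $[u_t,u_{t+1}]$ remains in $\{\|u\|_\infty\le R\}$, so that the smoothness constant $L=3R^2$ is legitimately in force before Theorem~\ref{thm:PL} is applied.
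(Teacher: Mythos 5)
Your proposal follows the same skeleton as the paper's proof: decouple the dynamic coordinate-wise, run Theorem~\ref{thm:PL} on the per-coordinate Lyapunov function $V_{t,i}$ from \eqref{lyp}, and induct on the invariant $V_{t,i}<V_{0,i}=\tfrac14(\alpha^2-w_{*,i})^2$, reading off the interval for $u_{t,i}$ from the sublevel set of $\phi_i$. The difference is that you make explicit, and then close, two steps the paper's proof passes over silently. First, the sublevel set $\{\phi_i<\phi_i(\alpha)\}$ is disconnected, equal to $(\alpha,\sqrt{2w_{*,i}-\alpha^2})\cup(-\sqrt{2w_{*,i}-\alpha^2},-\alpha)$, so the Lyapunov decrease alone only gives $|u_{t+1,i}|\in(\alpha,\sqrt{2w_{*,i}-\alpha^2})$; the paper jumps directly to the positive branch without argument. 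Your sign-flip exclusion --- using $\theta(u_{t,i}-u_{t-1,i})^2\le\phi_i(\alpha)-\phi_i(u_{t,i})=\tfrac14(u_{t,i}^2-\alpha^2)(2w_{*,i}-\alpha^2-u_{t,i}^2)$ so that the admissible velocity vanishes as $u_{t,i}\downarrow\alpha$, combined with $\theta\ge L-\alpha^2\ge 5\max_j w_{*,j}$ and the sign of the gradient on $[\alpha,\sqrt{w_{*,i}}]$ --- is correct (I checked the reduction to $2\theta(u_{t,i}+\alpha)>(u_{t,i}-\alpha)w_{*,i}$ and the crude estimates in the case $u_{t,i}>\sqrt{w_{*,i}}$) and genuinely needed to make the induction airtight. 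Second, your observation that the local smoothness constant $L=3R^2$ must be certified on the segment $[u_t,u_{t+1}]$ \emph{before} invoking Theorem~\ref{thm:PL} at step $t\to t+1$, via a direct a priori bound on $\|u_{t+1}-u_t\|$, addresses a circularity the paper does not mention. So: same route, but your version supplies the missing pieces rather than introducing a new idea.
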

Lemma~\ref{lem:stay} shows that HB always stays away from the points in $\mathcal{V}$ under certain conditions of the step size and the momentum parameter when initialized properly, and hence it remains in a region where PL and $\textsc{average out}$ hold. Therefore, our main theorem and the corollary can apply.
The proof of Lemma~\ref{lem:deep} and Lemma~\ref{lem:stay} is in Appendix~\ref{app:ex3}, where we have more discussions.
For the case of $N>2$, we also show that the PL condition holds locally in Appendix~\ref{app:lem:deep}. However, the average Hessian $H_f(u)$ does not have a simple analytical form. With the computer-aid analysis, we plot the average Hessian towards $w_{*}$ for some number of layers $N$ when the target is $w_{*}=1$ in
Sub-figure (a) of 
 Figure~\ref{fig:diag}. 
Sub-figure (b) 
of the same figure
shows HB converges significantly faster than GD empirically.

\section{Acknowledgment}
The authors thank the anonymous reviewers for their constructive feedback.
The authors also thank Molei Tao and Sinho Chewi for discussions.
B. Hu was supported by the National Science Foundation (NSF) award CAREER-2048168.

\bibliography{icml_HBPL}
\bibliographystyle{icml2022}

\clearpage
\appendix

\onecolumn

\section{More discussions about prior works of HB} \label{app:HB}

\begin{algorithm}[t]
\begin{algorithmic}[1]
\small
\caption{Heavy Ball (Equivalent Version 2)
} \label{alg:HB1}{}
\STATE Required: the step size $\eta$ and the momentum parameter $\beta_{t}$.
\STATE Init: $w_{0} \in \reals^d $ and $m_{-1} = 0 \in \reals^d$.
\FOR{$t=0$ to $T$}
\STATE Given current iterate $w_t$, compute gradient $\nabla f(w_t)$.
\STATE Update momentum $m_t = \beta_t  m_{t-1} +  \nabla f(w_t)$.
\STATE Update iterate $w_{t+1} = w_t - \eta m_t$.
\ENDFOR
\end{algorithmic}
\end{algorithm}

\subsection{Spectral norm of $A_{t}$ in \eqref{dynamic} satisfies $\| A_t \|_2 \geq 1$}

Let us recall the definition $A_t:=\begin{bmatrix}
I_{d} - \eta H_t + \beta I_{d} & - \beta  I_{d}   \\
I_{d} & 0_{d} 
\end{bmatrix}$ in HB's dynamic \eqref{dynamic}.
We have Lemma~\ref{lem:At1} below, which shows that the spectral norm of $A_{t}$ is not less than $1$. This implies that the spectral norm $\| A_t \|_{2}$ is not a useful quantity to look at when we try to derive the convergence rate of HB.

\begin{lemma} \label{lem:At1}
The spectral norm satisfies $\| A_t \|_2 \geq 1$
for any step size $\eta \leq \frac{1}{L}$ and any momentum parameter $\beta \in [0,1]$.
\end{lemma}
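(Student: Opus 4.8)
The plan is to bound $\|A_t\|_2$ from below by evaluating $A_t$ on a single, cleverly chosen unit vector, using the variational characterization of the spectral norm, $\|A_t\|_2 = \max_{\|v\|_2 = 1} \|A_t v\|_2$. The key structural observation is that the bottom block row of $A_t$ is $[\,I_d \ \ 0_d\,]$, so that for any input $v$ the last $d$ coordinates of $A_t v$ reproduce the first $d$ coordinates of $v$ exactly. This copying alone pins the output norm from below, independently of the top block row.

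Concretely, I would take the test vector $v = \begin{bmatrix} x \\ 0_d \end{bmatrix}$ with $x \in \reals^d$ and $\|x\|_2 = 1$, so that $\|v\|_2 = 1$. Applying $A_t$ gives
\[
A_t v = \begin{bmatrix} (I_d - \eta H_t + \beta I_d)\,x \\ x \end{bmatrix},
\]
and therefore
\[
\|A_t v\|_2^2 = \|(I_d - \eta H_t + \beta I_d)\,x\|_2^2 + \|x\|_2^2 \;\geq\; \|x\|_2^2 = 1.
\]
Since $\|A_t\|_2 \geq \|A_t v\|_2 / \|v\|_2 = \|A_t v\|_2 \geq 1$, the claim follows. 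The only thing to get right is the choice of test vector: zeroing out the lower half of $v$ isolates the identity block and simultaneously discards the $-\beta I_d$ contribution, so no cancellation can occur and the lower bound is clean.

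I do not expect a genuine obstacle here; the argument is essentially a one-line consequence of the block structure, and one could equally read it off the $(2,1)$-block of $A_t^* A_t$ if one preferred the eigenvalue formulation. The point worth flagging is that the bound never uses the stated hypotheses $\eta \leq \frac{1}{L}$ or $\beta \in [0,1]$ — these are recorded only to fix the operating regime of HB. That is in fact the moral of the lemma: $\|A_t\|_2 \geq 1$ holds unconditionally, which is exactly why $\|A_t\|_2$, and hence the naive submultiplicative bound $\|\prod_t A_t\|_2 \leq \prod_t \|A_t\|_2$, cannot certify any contraction, motivating the instantaneous-rate quantity $\|\Psi_t\|_2$ used in the main analysis.
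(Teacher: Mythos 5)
Your argument is correct, and it is genuinely different from (and more elementary than) the paper's proof. The paper conjugates $A_t$ by the eigenbasis of $\eta H_t$ and a permutation to reduce to the $2\times 2$ blocks $\tilde{\Sigma}_i = \bigl[\begin{smallmatrix} 1+\beta-\lambda_i & -\beta \\ 1 & 0\end{smallmatrix}\bigr]$, then computes the characteristic polynomial of $\tilde{\Sigma}_i^{*}\tilde{\Sigma}_i$ and checks that its largest root is at least $1$, a calculation that invokes $\lambda_i \le 1$ (i.e.\ $\eta \le \frac{1}{L}$). Your test-vector argument bypasses all of this: the bottom block row $[\,I_d \ \ 0_d\,]$ copies the top half of the input into the output, so $\|A_t v\|_2 \ge \|x\|_2$ for $v = [x;0]$, and the bound follows in one line. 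What your route buys is generality and transparency --- the conclusion holds for any $\eta$, any $\beta$, and indeed any matrix in the top-left block, which sharpens the moral of the lemma (submultiplicative bounds on $\|A_t\|_2$ can never certify contraction). What the paper's route buys is consistency with the block-diagonalization machinery it reuses in Lemmas~\ref{lem:diagonal} and~\ref{vb}, and in principle an exact per-block expression for $\|A_t\|_2$ rather than just a lower bound; but for the stated lemma that precision is unnecessary. One tiny quibble with an aside of yours: in the eigenvalue formulation the bound is read off the $(1,1)$ block of $A_t^{*}A_t$, which equals $(I_d-\eta H_t+\beta I_d)^2 + I_d \succeq I_d$ (the $+I_d$ coming from the $(2,1)$ block of $A_t$), not off the $(2,1)$ block of $A_t^{*}A_t$ itself; this does not affect your main argument.
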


\begin{proof}
For brevity, in this proof we suppress the subscript $t$, i.e., we write
$A:=\begin{bmatrix}
I_{d} - \eta H + \beta I_{d} & - \beta  I_{d}   \\
I_{d} & 0_{d} 
\end{bmatrix}$.

Let $U{\rm Diag}([\lambda_1,\dots,\lambda_d]) U^\top$ be the eigen-decomposition of $\eta H$,
where $\lambda_1,\dots,\lambda_d$ are the eigenvalues of $\eta H$.
Observe that each $\lambda_{i}$ satisfies $\lambda_i \leq 1$ since $\eta \leq \frac{1}{L}$.

We have
\begin{align}
    A = 
\begin{bmatrix}
U& 0\\0& U
\end{bmatrix}
\begin{bmatrix} 
(1 + \beta) I_d - {\rm Diag}([\lambda_1,\dots,\lambda_d])  & - \beta I_d \\
I_d & 0 
\end{bmatrix}
\begin{bmatrix}
U^\top& 0\\0& U^\top
\end{bmatrix}.
\end{align}
Let $\tilde{U} := \begin{bmatrix}
U& 0\\0& U
\end{bmatrix}$. Then, after applying some permutation matrix $\tilde{P}$, the matrix $A$ can be further written as
\begin{align}\label{decompose1}
A = \tilde{U}\tilde{P} \Sigma \tilde{P}^T\tilde{U}^\top,
\end{align}
where $\Sigma$ is a block diagonal matrix consisting of $d$ 2-by-2
matrices $\tilde{\Sigma}_i := 
\begin{bmatrix}
1+\beta- \lambda_i& -\beta\\1& 0
\end{bmatrix}$. 

Since $\tilde{U}$ and $\tilde{P}$ are unitary matrices,
it does not affect the spectral norm $\|A \|_2$. 
That is, $\| A \|_2  = \| \Sigma \|_{2}$.
Moreover, as $\Sigma \in \reals^{{2d \times 2d }}$ is a block diagonal matrix,
we have $\| \Sigma \|_2 = \max_{i \in [d] } \| \Sigma_i \|_2$. 
Therefore, it suffices to show that the spectral norm of each sub-matrix $\Sigma_{i} \in \reals^{2 \times 2}$ satisfies $\| \Sigma_i \|_{2} \geq 1$.

By definition of the spectral norm, we have
\begin{equation}
\begin{split}
\| \Sigma_i \|_2 & 
= \sqrt{ \lambda_{\max}\left( \begin{bmatrix} 1 + \beta - \lambda_i  & 1 \\ - \beta & 0   \end{bmatrix} \begin{bmatrix} 1 + \beta - \lambda_i  & -\beta \\ 1 & 0   \end{bmatrix} \right) }
\\ & = \sqrt{ \lambda_{\max}\left( \underbrace{ \begin{bmatrix} (1+\beta-\lambda_i)^2 +1 & -\beta(1+\beta - \lambda_i) \\ -\beta(1+\beta -\lambda_i) & \beta^2 \end{bmatrix} }_{:=R_i}  \right)  }.
\end{split}
\end{equation}
The characteristic polynomial of $R_{i}$ is $x^{2} - \left( \beta^2 + (1+\beta-\lambda_i)^2 + 1  \right) x + \beta^{2} (  (1+\beta-\lambda_i)^2 + 1  ) - \beta^2 (1+\beta -\lambda_i)^2  $, which has two roots $x =  \frac{ \beta^2 + (1+\beta-\lambda_i)^2 + 1 
\pm \sqrt{  \left( \beta^2 + (1+\beta-\lambda_i)^2 + 1  \right)^2  - 4 \beta^2 }
  }{2}$. To show that the largest root $x \geq 1$ for any $\lambda_{i} \leq 1$, it suffices to show that $\left( \beta^2 + (1+\beta-\lambda_i)^2 + 1  \right)^2  - 4 \beta^2 \geq 1$ for any $\lambda_{i} \leq 1$. By simple calculations, it can be shown that the latter is true.  So $\lambda_{{\max}}(R_i)\geq 1$ and hence $\| \Sigma_i \|_2 \geq 1$. Since this holds for all $i$, we can conclude that $\| A \|_{2} \geq 1$. 
\end{proof}

\subsection{More discussions about \citet{WLA21}}

Here we discuss the limitation of the approach by \citet{WLA21} further.
Denote $\xi_{t}:= w_{t} - w_{*}$.
The way that \citet{WLA21} proposed is first by rewriting \eqref{dynamic} as
\begin{equation} \label{eq:m1}
\begin{split}
\begin{bmatrix}
\xi_{t+1} \\
\xi_{t} 
\end{bmatrix}
 &
=
\begin{bmatrix}
I_{d} - \eta H_0 + \beta I_{d} & - \beta  I_{d}   \\
I_{d} & 0_{d} 
\end{bmatrix}
\begin{bmatrix}
\xi_{t} \\
\xi_{t-1} 
\end{bmatrix}
+
\begin{bmatrix}
\varphi_{t} \\ 0_{d}
\end{bmatrix}
,
\end{split}
\end{equation}
where $\varphi_{t} = \eta \left( H_t - H_{0} \right) (w_{t} - w_*)$.
Recursively expanding the above equation leads to 
\begin{equation} \label{eq:m2}
\begin{bmatrix}
\xi_{t+1} \\
\xi_{t} 
\end{bmatrix}
=  
A^{t+1}
\begin{bmatrix}
\xi_{0} \\
\xi_{-1} 
\end{bmatrix}
+ \sum_{s=0}^{t}  A^{t-s} \begin{bmatrix}
\varphi_s \\
0 
\end{bmatrix}
.
\end{equation}
The first term on the r.h.s. of \eqref{eq:m2} is the size of the matrix-power-vector product and hence can be bounded as
$\textstyle
\left \|
A^{t+1}
\begin{bmatrix}
w_0 - w_* \\
w_{-1}- w_* 
\end{bmatrix}
\right\|_2
\leq 4 \sqrt{\kappa} \left( 1 - \frac{1}{2 \sqrt{\kappa}} \right)^{t+1}
\|
\begin{bmatrix}
w_0 - w_* \\
w_{-1}- w_* 
\end{bmatrix} \|,
$
which decays at an accelerated linear rate. For the second term on the r.h.s. of \eqref{eq:m2}, \citet{WLA21} show that if $\varphi_{t}$ on \eqref{eq:m1} is sufficiently small for all $t$, then the second term would not be dominant so that an accelerated linear rate of the convergence still holds. However, the constraint only allows $H_{t}$ to be a slight deviation from $H_{0}$, which could be seen from their theorem statements, e.g., Theorem 6 and 8 in the paper.

\subsection{Some clarifications}

\citep{LRP16} give a non-convergent example of HB, which is a piecewise linear function satisfies
\begin{equation}
\nabla f(w) = \begin{cases}  25 w, \quad  \text{ if } w < 1, \\ w + 24 , \quad \text{ if } 1 \leq w < 2  \\ 25 w -24 , \quad \text{ if } w \geq 2 \end{cases}.
\end{equation}
It should be clarified that the divergence is under \emph{a specific choice} of the momentum parameter and the step size, which in no way implies that HB would be doomed to diverge for minimizing this function under other configurations of the parameters. 
Since the function is strongly convex smooth (but not twice differentiable), by invoking the results of prior works \cite{KDP18,GFJ15,WAL21}, one can show that HB converges under appropriate choices of the momentum parameter and the step size for minimizing this function.

Finally, given a dynamic $\xi_{t+1} = A \xi_{t}$, 
where $A$ is a fixed real matrix whose spectral radius satisfies $\rho(A) < 1$,
one can show that there exists a norm $\| \cdot \|_{P} = \langle \cdot, P \cdot \rangle$ for some positive definite matrix $P$ such that
$\| \xi_{t+1} \|_P \leq ( \rho(A) + \bar{\epsilon} ) \| \xi_t \|_{P}$ holds for any $\bar{\epsilon} > 0$. The caveat is to find the matrix $P$ (which depends on the chosen $\bar{\epsilon}$) to make the norm
$\| \cdot \|_{P}$ explicit. When converting the bound in $\| \cdot \|_{P}$ to that in terms of the $l_{2}$-norm, one would get $\| \xi_{t+1} \|_2 \leq C ( \rho(A) + \bar{\epsilon} )^t \| \xi_0 \|_{2}$, where $C>0$ is a constant that depends on the the condition number of $P$, which in turn depends on the chosen $\bar{\epsilon}$.
We refer the reader to a lecture note \cite{HU20} for further details.

\section{Proof of Lemma~\ref{lem:diagonal}} \label{app:lem1}

\noindent
\textbf{Lemma~\ref{lem:diagonal} (Full-version):}[\citet{WLA21}]
\textit{
Consider a matrix
\begin{equation}
A:=
\begin{bmatrix} 
(1 + \beta) I_d -  H  & - \beta I_d\\ 
I_d & 0
\end{bmatrix}
\in \reals^{2 d \times 2 d},
\end{equation}
where $H \in \reals^{{d \times d}}$ is a symmetric matrix and has eigenvalues $\lambda_{1} \geq \lambda_2 \geq \dots \geq \lambda_{i} \geq \dots \geq \lambda_{d}$.
Suppose $\beta$ satisfies $1\geq \beta > \left( 1 - \sqrt{\lambda_i}  \right)^2$ for all $i \in [d]$. Then, we have
$A$ is diagonalizable with respect to the complex field $\mathbb{C}$ in $\mathbb{C}^{2d \times 2d}$, i.e.,
\begin{equation}
(\textbf{spectral decomposition}) \quad A = PDP^{-1}
\end{equation}
for some matrix
$P = \tilde{U}\tilde{P} Q$,
where
$\tilde{U}$ and $\tilde{P}$ are some orthogonal matrices,
$Q ={\rm Diag}(Q_1,\dots,Q_d) \in \mathbb{C}^{{2d \times 2d}}$ is a block diagonal matrix,
and $D \in \mathbb{C}^{{2d \times 2d}}$ is a diagonal matrix.
Specifically, the $i_{{th}}$ block diagonal of $Q$ is $Q_i = [q_i,\bar{q_i}] \in \mathbb{C}^{{2 \times 2}}$, where 
$q_i = \begin{bmatrix}z_i \\ 1 \end{bmatrix} $ and $\bar{q_i}= \begin{bmatrix}\bar{z}_i \\ 1 \end{bmatrix}$ are the eigenvectors of $\tilde{\Sigma}_i := \begin{bmatrix}
1+\beta-\lambda_i& -\beta\\1& 0
\end{bmatrix} \in \reals^{{2 \times 2}}$ with corresponding eigenvalues $z_{i}$ and $\bar{z}_{i}$ respectively.
Furthermore, the diagonal matrix $D$ is
\begin{align}
D ={\rm Diag}\left(\begin{bmatrix}
    z_{1}&0\\0&\bar{z}_{1}
    \end{bmatrix},\dots,\begin{bmatrix}
    z_{d}&0\\0&\bar{z}_{d}
    \end{bmatrix}\right) \in \mathbb{C}^{{2d \times 2d}} 
\end{align}
and $|z_{i}| = \sqrt{\beta}$, which means that the magnitude of
each diagonal element on the sub-matrix 
$\begin{bmatrix}
    z_{i}&0\\0&\bar{z}_{i}
\end{bmatrix} \in \mathbb{C}^{{2 \times 2}} $ 
is $\sqrt{\beta}$ and hence $\| D \|_{2} = \sqrt{ \beta}$.
}
\begin{proof}
We replicate the proof of \citet{WLA21} here for the completeness.

Let $U{\rm Diag}([\lambda_1,\dots,\lambda_d]) U^\top$ be the eigen-decomposition of $H$. Then 
\begin{align}
    A = 
\begin{bmatrix}
U& 0\\0& U
\end{bmatrix}
\begin{bmatrix} 
(1 + \beta) I_d - {\rm Diag}([\lambda_1,\dots,\lambda_d])  & - \beta I_d \\
I_d & 0 
\end{bmatrix}
\begin{bmatrix}
U^\top& 0\\0& U^\top
\end{bmatrix}.
\end{align}
Let $\tilde{U} := \begin{bmatrix}
U& 0\\0& U
\end{bmatrix}$. Then, after applying some permutation matrix $\tilde{P}$, the matrix $A$ can be further simplified into 
\begin{align}\label{decompose1}
A = \tilde{U}\tilde{P} \Sigma \tilde{P}^T\tilde{U}^\top,
\end{align}
where $\Sigma$ is a block diagonal matrix consisting of $d$ 2-by-2
matrices $\tilde{\Sigma}_i := 
\begin{bmatrix}
1+\beta- \lambda_i& -\beta\\1& 0
\end{bmatrix}$. The characteristic polynomial of $\tilde{\Sigma}_i$ is $x^2 - (1+\beta -\lambda_i)x +\beta$. Hence it can be shown that when $\beta > (1-\sqrt{ \lambda_i})^2$ then the roots of polynomial are conjugate and have magnitude $\sqrt{\beta}$. These roots are exactly the eigenvalues of $\tilde{\Sigma}_i \in \reals^{2 \times 2}$. On the other hand, the corresponding eigenvectors $q_i,\bar{q}_i$ are also conjugate to each other as $\tilde{\Sigma}_i \in \reals^{2 \times 2}$ is a real matrix. As a result, $\Sigma \in \reals^{2d \times 2d}$ admits a block eigen-decomposition as follows,
\begin{align}\label{decompose2}
    \Sigma = & {\rm Diag}(\tilde{\Sigma}_i,\dots,\tilde{\Sigma}_d)\nonumber\\
    =&
    {\rm Diag}(Q_1,\dots,Q_d)
    {\rm Diag}\left(\begin{bmatrix}
    z_{1}&0\\0&\bar{z}_{1}
    \end{bmatrix},\dots,\begin{bmatrix}
    z_{d}&0\\0&\bar{z}_{d}
    \end{bmatrix}\right){\rm Diag}(Q^{-1}_1,\dots,Q^{-1}_d),
\end{align}
where $Q_i= [q_i,\bar{q}_i] \in \mathbb{C}^{2 \times 2}$ and $z_{i}, \bar{z}_{i}$ are eigenvalues of $\tilde{\Sigma}_i:= \begin{bmatrix}
1+\beta-\lambda_i& -\beta\\1& 0
\end{bmatrix}$,
since they are conjugate by the condition on $\beta_{i}$. 
The eigenvalues satisfy 
\begin{align}\label{val}
    z_i + \bar{z}_i = 2\Re{z_i} &= 1+\beta -\lambda_i, \\
    z_i\bar{z}_i &= |z_i|^2 = \beta. \label{val22}
\end{align}
On the other hand, the eigenvalue equation $\tilde{\Sigma}_iq_i = z_i q_i$ together with (\ref{val}) implies $q_i = \begin{bmatrix}z_i \\ 1 \end{bmatrix}$.
Denote $Q:={\rm Diag}(Q_1,\dots,Q_d)$ and
\begin{align}
D:={\rm Diag}\left(\begin{bmatrix}
    z_{1}&0\\0&\bar{z}_{1}
    \end{bmatrix},\dots,\begin{bmatrix}
    z_{d}&0\\0&\bar{z}_{d}
    \end{bmatrix}\right).
\end{align}
By combining (\ref{decompose1}) and (\ref{decompose2}), we have
\begin{align}
    A & = P {\rm Diag}\left(\begin{bmatrix}
    z_{1}&0\\0&\bar{z}_{1}
    \end{bmatrix},\dots,\begin{bmatrix}
    z_{d}&0\\0&\bar{z}_{d}
    \end{bmatrix}\right) P^{-1}
     = P D P^{-1} ,
\end{align}
where 
\begin{align} \label{def:P}
P = \tilde{U}\tilde{P}Q,
\end{align} by the fact that $\tilde{P}^{-1} = \tilde{P}^\top$ and $\tilde{U}^{-1}=\tilde{U}^{\top}$.

\end{proof}

\section{Proof of Lemma~\ref{vb}} \label{app:lem2}

\noindent
\textbf{Lemma~\ref{vb}}
\textit{
Assume we have $1\geq \beta_t > (1-\sqrt{\eta \lambda_{t,i}})^2$ 
for all $i \in [d]$ and that the co-diagonalization condition $\spadesuit$ holds.
Then, the \textbf{instantaneous rate} $\| \Psi_t\|_2$ at $t$ satisfies:\\
(I) If $\lambda_{t,i} \geq \lambda_{t-1,i}$, then
\begin{equation}  \label{vbb1}
\begin{split}
& \|\Psi_t\|_2
 = 
  \sqrt{\beta_t} \times
\underbrace{ \left( \max_{i \in [d]} \sqrt{1+ \frac{\eta \lambda_{t,i} -\eta \lambda_{t-1,i}}{ (1+\sqrt{\beta_t})^2 - \eta \lambda_{t,i}  } }
 + \mathbbm{1}\{\beta_t \neq \beta_{t-1}\} \phi_{t,i}
\right) }_{\text{extra factor 1} }.
\end{split}
\end{equation}
(II) If $\lambda_{t-1,i} \geq \lambda_{t,i}$, then
\begin{equation} \label{vbb2}
\begin{split}
\|\Psi_t\|_2
& = 
\sqrt{\beta_t} \times
\underbrace{ \left( \max_{i \in [d]}
\sqrt{1+ \frac{\eta \lambda_{t-1,i} - \eta \lambda_{t,i}}{ \eta \lambda_{t,i}  - (1-\sqrt{\beta_t })^2  } } 
 + \mathbbm{1}\{\beta_t \neq \beta_{t-1}\} \phi_{t,i}
\right) }_{\text{extra factor 2} }.
\end{split}
\end{equation}
The term $\phi_{{t,i}}$ in \eqref{vbb1} and \eqref{vbb2} can be bounded as
\begin{equation} \label{phidef}
\begin{split}
\phi_{t,i} & \leq
3 \sqrt{ \frac{ | \beta_{t-1} - \beta_t| }{| 4\beta_t - (1+\beta_t - \eta \lambda_{t,i})^2| } } +  3  \sqrt{  \frac{ \sqrt{ | \beta_{t-1} - \beta_t| | \eta \lambda_{t,i} - \eta \lambda_{t-1,i}| } }{ |4\beta_t - (1+\beta_t -\eta \lambda_{t,i})^2| }   }.
\end{split}
\end{equation}
}

\begin{proof} 
By Lemma~\ref{lem:diagonal}, we have 
\[
\|\Psi_t \|_2 := \|D_tP_t^{-1}P_{t-1}\|_2 \leq \sqrt{\beta_t}\|P_t^{-1}P_{t-1}\|_{2},
\]
Based on the decomposition from Lemma~\ref{lem:diagonal}, 
we have $P_t = \tilde{U}_t \tilde{P}_t Q_t$,
where $Q_t:={\rm Diag}(Q_{t,1},\dots,Q_{t,d}) $,
and $\tilde{U}_t$ and $\tilde{P}_t$ are some orthogonal matrices.

Then, $\| P_t^{-1}P_{t-1} \|_2 = \| Q_t^{-1} M_t Q_{t-1} \|_2$, where $M_t = (\tilde{P}_t)^{-1} (\tilde{U}_t)^{-1} \tilde{U}_{t-1}\tilde{P}_{t-1}$ is an orthogonal matrix.
When the condition $\spadesuit$ holds, it suffices to analyze $\| Q_t^{-1} Q_{t-1} \|_2$ for bounding $\| P_t^{-1}P_{t-1} \|_2$, which is conducted by exploiting the fact that $Q_t$ is block-diagonal as follows.

In the following, we consider a fixed $i \in [d]$. For brevity, in the following we denote 
\begin{mdframed}
\begin{align} 
Q_{t}  & := Q_{t,i}  \notag \\
\lambda_{t} & := \eta_t \lambda_{{t,i}} \label{brev1} \\
z_t &:= z_{t,i} \notag \\
\bar{z}_t &:= \bar{z}_{t,i}. \notag
\end{align}
\end{mdframed}
By Lemma~\ref{lem:diagonal} (full version) and (\ref{val}), we have
\begin{align}
    Q_t &= \begin{bmatrix}
    z_t & \bar{z}_t\\
    1 & 1
    \end{bmatrix},\\
    \Re{z_t} &= \frac{1+\beta_t-\lambda_t}{2}, \label{rzt} \\
    \Im{z_t}&=\sqrt{\beta_t - \left(\frac{1+\beta_t-\lambda_t}{2}\right)^2}.\label{izt}
    \end{align}
    These quantities would play an essential role for the remaining calculations.
    On the other hand,
    \begin{align}
    G_t:= Q_{t}^{-1}Q_{t-1} &= \frac{1}{z_t-\bar{z}_t}
    \begin{bmatrix}
    1 & -\bar{z}_t\\
    -1 & z_t
    \end{bmatrix}
    \begin{bmatrix}
    z_{t-1} & \bar{z}_{t-1}\\
    1 & 1
    \end{bmatrix}
    \\
    & = \frac{1}{z_t-\bar{z}_t}\begin{bmatrix}
    z_{t-1}-\bar{z}_t & \bar{z}_{t-1}-\bar{z}_t\\
    -z_{t-1}+z_t & -\bar{z}_{t-1} + z_t
    \end{bmatrix}.
\end{align}
Recall the definition of the spectral norm, we have $\|G_t\|_2 = \sqrt{\lambda_{\max}({G_tG_t^*})}$. 
Observe that $G_t$ is of the form 
$\begin{bmatrix}
a & \bar{b}\\-b & - \bar{a} 
\end{bmatrix}$, where
\begin{align} 
    a &= \frac{1}{2j\Im{z_t}}(z_{t-1}-\bar{z_t}), \label{ab1}\\
    b &= \frac{1}{2j\Im{z_t}}(z_{t-1}-z_t). \label{ab2}
\end{align}
This results in that $G_tG_t^*$ be of the form $\begin{bmatrix}
{a}' & \bar{{b}'}\\ {b}' &  {a}' 
\end{bmatrix}$, where $a' = |a|^2 + |b|^2$ and $b'=-2\bar{a}b$. Let the eigenvalues of $G_tG_t^*$ be $\xi_1$ and $\xi_2$, then $\xi_1 + \xi_2 = 2a'$, $\xi_1 \xi_2 = a'^2-|b'|^2$. Therefore, it is clear that $\xi_1= a' + |b'|$, $\xi_2 = a' - |b'|$, and 
\begin{equation} \label{GM}
\lambda_{\max}(G_tG_t^*) = a' + |b'|.
\end{equation}
Our remaining task is to calculate $a' = |a|^2 + |b|^2$ and $|b'|=2|a||b|$.

First we have 
\begin{equation} \label{eqa}
\begin{split} 
& a'=|a|^2 + |b|^2 \overset{\eqref{ab1},\eqref{ab2}}{=}
    \frac{1}{4(\Im {z_t})^2}(2|z_{t-1}|^2 + 2|z_t|^2 - 4\Re{z_t}\Re{z_{t-1}})
    \\
    &\overset{\eqref{val},\eqref{val22}}{=}\frac{2\beta_t + 2 \beta_{t-1} -  (1+\beta_t-\lambda_t)(1+\beta_{t-1}-\lambda_{t-1})}{4\beta_t - (1+\beta_t - \lambda_t)^2}
  \\  & \quad =\frac{4\beta_t -  (1+\beta_t-\lambda_t)(1+\beta_{t}-\lambda_{t-1})}{4\beta_t - (1+\beta_t - \lambda_t)^2} 
  + \frac{ 2(\beta_{t-1} - \beta_t) - (\beta_{t-1} - \beta_t ) ( 1 + \beta_t - \lambda_t) }{4\beta_t - (1+\beta_t - \lambda_t)^2}. 
\end{split}
\end{equation}
Second we also have $|b'|=2|a||b|=$
\begin{align} \label{eqb}
    &\frac{\sqrt{(\Re{z_{t-1}}-\Re{z_t})^4 + 2 (\Re{z_{t-1}}-\Re{z_t})^2((\Im{z_{t-1}})^2+(\Im{z_t})^2) + ((\Im{z_t})^2-(\Im{z_{t-1}})^2)^2 }}{2(\Im{z_t})^2}.
\end{align}
For $\Re{z_{t-1}}-\Re{z_t}$ in \eqref{eqb}, we have
\begin{align}
\Re{z_{t-1}}-\Re{z_t} \overset{\eqref{rzt}}{ =} \left( \frac{\beta_{t-1}-\beta_t}{2}  + \frac{\lambda_t - \lambda_{t-1}}{2}  \right). \label{i1}
\end{align}
For $(\Im{z_{t-1}})^2+(\Im{z_t})^2$ in \eqref{eqb}, we have
\begin{equation} \label{i2}
\begin{split} 
 (\Im{z_{t-1}})^2+(\Im{z_t})^2
& \overset{\eqref{izt}}{ =} \beta_{t-1} - \left(  \frac{1+\beta_{t-1} - \lambda_{t-1}}{ 2}  \right)^2
+ \beta_{t} - \left(  \frac{1+\beta_{t} - \lambda_t}{ 2}  \right)^2 
\\ &
= 2\beta_t -\left(\frac{1+\beta_t-\lambda_t}{2}\right)^2-\left(\frac{1+\beta_t-\lambda_{t-1}}{2}\right)^2 
\\ & \quad + ( \beta_{t-1} - \beta_t )
- 2 \left(  \frac{1+\beta_t-\lambda_{t-1}}{2}  \right) \left( \frac{\beta_{t-1} - \beta_t}{2}  \right) - \left( \frac{\beta_{t-1} - \beta_t}{2}  \right)^2. 
\end{split}
\end{equation}
For $(\Im{z_{t}})^2-(\Im{z_{t-1}})^2$ in \eqref{eqb}, we have
\begin{equation}
\begin{split}
 (\Im{z_{t}})^2-(\Im{z_{t-1}})^2
& = 
\beta_{t} - \left(  \frac{1+\beta_{t} - \lambda_t}{ 2}  \right)^2
-
\beta_{t-1} + \left(  \frac{1+\beta_{t-1} - \lambda_{t-1}}{ 2}  \right)^2
\\ &
= \beta_{t} - \beta_{t-1}
+ \left( \frac{1+\beta_{t-1} - \lambda_{t-1}}{ 2} + \frac{1+\beta_{t} - \lambda_{t}}{ 2}   \right) \left( \frac{1+\beta_{t-1} - \lambda_{t-1}}{ 2} - \frac{1+\beta_{t} - \lambda_{t}}{ 2}  \right)
\\ &
= \beta_t - \beta_{t-1}
+ \left( \frac{1+\beta_{t} - \lambda_{t-1}}{ 2} + \frac{1+\beta_{t} - \lambda_{t}}{ 2}    \right) \left( \frac{\lambda_t - \lambda_{t-1} + \beta_{t-1} - \beta_t  }{2 }   \right)
\\ &
\qquad + \left( \frac{\beta_{t-1} - \beta_{t} }{ 2 }    \right)
\left(  \frac{ \beta_{t-1} - \beta_{t} +\lambda_t - \lambda_{t-1}  }{2}   \right)
\\ &
= \beta_t - \beta_{t-1}
+ \left( \frac{1+\beta_{t} - \lambda_{t-1}}{ 2} + \frac{1+\beta_{t} - \lambda_{t}}{ 2}    \right) \left( \frac{\lambda_t - \lambda_{t-1}}{2}  \right)
\\ & \qquad + \left( \frac{\beta_{t-1} - \beta_{t} }{ 2 }    \right)
\left( \frac{2+\beta_{t-1} + \beta_{t} - 2 \lambda_{t-1} }{ 2 }   \right) \label{i3}
\\ &
= \left( \beta_t - \beta_{t-1} \right)
\left( 1  - \frac{2 + \beta_{t-1} + \beta_{t} - 2 \lambda_{t-1} }{ 4 }   \right)
+ \left( \frac{1+\beta_{t} - \lambda_{t-1}}{ 2} + \frac{1+\beta_{t} - \lambda_{t}}{ 2}    \right) \left( \frac{\lambda_t - \lambda_{t-1}}{2}  \right).
\end{split}
\end{equation}
Combing \eqref{eqb}, \eqref{i1}, \eqref{i2}, \eqref{i3}, we obtain
\begin{equation} \label{b}
\begin{split}
|b'| &=  \sqrt{(\Re{z_{t-1}}-\Re{z_t})^4 + 2 (\Re{z_{t-1}}-\Re{z_t})^2((\Im{z_{t-1}})^2+(\Im{z_t})^2) + ((\Im{z_t})^2-(\Im{z_{t-1}})^2)^2 }
\\ & = \frac{ \sqrt{ \textcircled{1} + \textcircled{2} } }{2(\Im{z_t})^2},
\end{split}
\end{equation}
where we defined 
\begin{equation}
\textstyle
\textcircled{1}:=
\left(\frac{\lambda_t-\lambda_{t-1}}{2}\right)^4+2\left(\frac{\lambda_t-\lambda_{t-1}}{2}\right)^2\left(2\beta_t-\left(\frac{1+\beta_t-\lambda_t}{2}\right)^2-\left(\frac{1+\beta_t-\lambda_{t-1}}{2}\right)^2\right) + \left(\frac{1+\beta_t-\lambda_t}{2}+\frac{1+\beta_t-\lambda_{t-1}}{2}\right)^2\left(\frac{\lambda_t-\lambda_{t-1}}{2}\right)^2,
\end{equation}
and also defined 
\begin{equation}
\begin{split}
\textstyle \textcircled{2} := & \textstyle
 4 \left(  \frac{\beta_{t-1}-\beta_t}{2} \right)^3 \left(  \frac{\lambda_t-\lambda_{t-1}}{2} \right)
+ 6  \left(  \frac{\beta_{t-1}-\beta_t}{2} \right)^2 \left(  \frac{\lambda_t-\lambda_{t-1}}{2} \right)^2 
+ 4  \left(  \frac{\beta_{t-1}-\beta_t}{2} \right)^1 \left(  \frac{\lambda_t-\lambda_{t-1}}{2} \right)^3
  + \left(  \frac{\beta_{t-1}-\beta_{t}}{2} \right)^4
\\ &  \textstyle
 +
2 \left( \left( \frac{\beta_{t-1}-\beta_t}{2} \right)^2 
+ 2 \left( \frac{\beta_{t-1}-\beta_t}{2} \right) 
\left( \frac{\lambda_{t}-\lambda_{t-1}}{2} \right)
 \right)
 \left( 
  ( \beta_{t-1} - \beta_t )
- 2 \left(  \frac{1+\beta_t-\lambda_{t-1}}{2}  \right) \left( \frac{\beta_{t-1} - \beta_t}{2}  \right) - \left( \frac{\beta_{t-1} - \beta_t}{2}  \right)^2
 \right)
\\ &  \textstyle
 +
2 \left( \left( \frac{\beta_{t-1}-\beta_t}{2} \right)^2 
+ 2 \left( \frac{\beta_{t-1}-\beta_t}{2} \right) 
\left( \frac{\lambda_{t}-\lambda_{t-1}}{2} \right)
 \right)
 \left( 
2\beta_t -(\frac{1+\beta_t-\lambda_t}{2})^2-(\frac{1+\beta_t-\lambda_{t-1}}{2})^2 
 \right)
\\ & \textstyle
+ 2 \left( \frac{\lambda_t - \lambda_{t-1} }{2}  \right)^2
 \left( 
  ( \beta_{t-1} - \beta_t )
- 2 \left(  \frac{1+\beta_t-\lambda_{t-1}}{2}  \right) \left( \frac{\beta_{t-1} - \beta_t}{2}  \right) - \left( \frac{\beta_{t-1} - \beta_t}{2}  \right)^2
 \right)
\\ & \textstyle
  + 
2 \left( \beta_t - \beta_{t-1} \right)
\left( 1  - \frac{2 +\beta_{t-1} + \beta_{t} - 2 \lambda_{t-1} }{ 4 }   \right)
\left( \frac{1+\beta_{t} - \lambda_{t-1}}{ 2} + \frac{1+\beta_{t} - \lambda_{t}}{ 2}    \right) \left( \frac{\lambda_t - \lambda_{t-1}}{2}  \right)
\\ & \textstyle
 + 
\left( \beta_t - \beta_{t-1} \right)^2
\left( 1  - \frac{2 + \beta_{t-1} + \beta_{t} - 2 \lambda_{t-1} }{ 4 }   \right)^2.
\end{split}
\end{equation}
Let us analyze $\textcircled{1}$ first.
We have
\begin{equation}\label{cir1}
\begin{split}
 \textstyle \textcircled{1} & \textstyle =
\left(\frac{\lambda_t-\lambda_{t-1}}{2}\right)^4+2\left(\frac{\lambda_t-\lambda_{t-1}}{2}\right)^2\left(2\beta_t-\left(\frac{1+\beta_t-\lambda_t}{2}\right)^2-\left(\frac{1+\beta_t-\lambda_{t-1}}{2}\right)^2\right)  + \left(\frac{1+\beta_t-\lambda_t}{2}+\frac{1+\beta_t-\lambda_{t-1}}{2}\right)^2\left(\frac{\lambda_t-\lambda_{t-1}}{2}\right)^2
\\ &
\textstyle
=
\left(\frac{\lambda_t-\lambda_{t-1}}{2}\right)^2
\left(  \left(\frac{\lambda_t-\lambda_{t-1}}{2} \right)^2  
+2  \left(2\beta_t-\left(\frac{1+\beta_t-\lambda_t}{2}\right)^2-\left(\frac{1+\beta_t-\lambda_{t-1}}{2}\right)^2\right)
+ \left( \frac{1+\beta_t-\lambda_t}{2}+\frac{1+\beta_t-\lambda_{t-1}}{2} \right)^2
  \right)
\\ &  
\textstyle
=
\left(\frac{\lambda_t-\lambda_{t-1}}{2}\right)^2
\left( 4 \beta_t + \left(\frac{\lambda_t-\lambda_{t-1}}{2} \right)^2  
-\left(\frac{1+\beta_t-\lambda_t}{2}\right)^2-\left(\frac{1+\beta_t-\lambda_{t-1}}{2}\right)^2
+ 2 \left( \frac{1+\beta_t-\lambda_t}{2} \right) \left( \frac{1+\beta_t-\lambda_{t-1}}{2} \right)  \right)
\\ &  
\textstyle
=
\left(\frac{\lambda_t-\lambda_{t-1}}{2}\right)^2 4 \beta_t.
\end{split}
\end{equation}
Let us switch to $\textcircled{2}$. 
It is noted that all the terms in $\textcircled{2}$ have the factor $|\beta_{{t-1}} - \beta_{t}|$ and hence \textcircled{2} will disappear when the momentum parameter is set to a constant value. 
By using $|\lambda_{t} - \lambda_{{t-1}}|\leq 1 $, $|\beta_{{t-1}} - \beta_{t}| \leq 1$,
$\lambda_{t} \in [0,1]$, and $\beta_{t} \in [0,1]$, 
we have
\begin{equation} \label{e65}
\begin{split}
\textcircled{2} & \leq
8 (\beta_{t-1}-\beta_t)^2 
+ 8 | \beta_{t-1} - \beta_t | | \lambda_{t} - \lambda_{t-1} |  .
\end{split}
\end{equation}
Using (\ref{eqa}), (\ref{eqb}), and (\ref{cir1}),
we can bound $\|G_t\|_2$ as follows:
\begin{equation}
\begin{split}
\|G_t\|_2  & = \sqrt{\lambda_{\max}(G_tG_t^*)} = \sqrt{ a' + |b'| }
\\& 
= \sqrt{
\frac{4\beta_t -  (1+\beta_t-\lambda_t)(1+\beta_{t}-\lambda_{t-1})}{4\beta_t - (1+\beta_t - \lambda_t)^2} 
  + \frac{ 2(\beta_{t-1} - \beta_t) - (\beta_{t-1} - \beta_t ) ( 1 + \beta_t - \lambda_t) }{4\beta_t - (1+\beta_t - \lambda_t)^2}
+ \frac{
\sqrt{
\left( \frac{\lambda_t-\lambda_{t-1}}{2}\right)^2 4 \beta_t
+ \textcircled{2}
}
}{ 2\left( \beta_t - (\frac{ 1+\beta_t - \lambda_t}{2} )^2  \right)} }. 
\end{split}
\end{equation}
If $\beta_{{t}} = \beta_{{t-1}} = \sqrt{\beta}$, then the above reduces to
\begin{equation}
\|G_t\|_2 =\sqrt{\frac{4\beta -  (1+\beta-\lambda_t)(1+\beta-\lambda_{t-1})+2|\lambda_t-\lambda_{t-1}|\sqrt{\beta}}{4\beta - (1+\beta - \lambda_t)^2}},
\end{equation}
which can be simplified as
\begin{equation} \label{60}
\|G_t\|_2 = \sqrt{\Phi_t} :=
\begin{cases}
 \underbrace{ \sqrt{1+ \frac{\lambda_{t} - \lambda_{t-1}}{ (1+\sqrt{\beta})^2 - \lambda_{t}  } }  }_{ \text{extra factor} } \quad 
\text{if } \lambda_{t} \geq \lambda_{t-1} \\
 \underbrace{ \sqrt{1+ \frac{\lambda_{t-1} - \lambda_{t}}{  \lambda_{t}  - (1-\sqrt{\beta})^2  } }  }_{ \text{extra factor} } \quad
\text{if } \lambda_{t-1} \geq \lambda_{t},
\end{cases}
\end{equation}
where we denote $\Phi_{t} := 1+ \frac{\lambda_{t} - \lambda_{t-1}}{ (1+\sqrt{\beta})^2 - \lambda_{t}  } $ if $\lambda_{t} \geq \lambda_{t-1}$,
while denote $\Phi_{t} := 1+ \frac{\lambda_{t-1} - \lambda_{t}}{  \lambda_{t}  - (1-\sqrt{\beta})^2  }  $ if $\lambda_{t-1} \geq \lambda_{t}$.

On the other hand, for the case $\beta_{{t}} \neq \beta_{{t-1}}$, by using $\sqrt{y+z} \leq \sqrt{y} + \sqrt{z}$, we have
\begin{equation} \label{61}
\begin{split}
 \|G_t\|_2  
& \leq  
\sqrt{
\Phi_t
+ 
\frac{ 2(\beta_{t-1} - \beta_t) - (\beta_{t-1} - \beta_t ) ( 1 + \beta_t - \lambda_t) 
}{ | 4\beta_t - (1+\beta_t - \lambda_t)^2| }
+ 
\frac{ \sqrt{ \textcircled{2}} }{ 2\left( \beta_t - \left(\frac{ 1+\beta_t - \lambda_t}{2} \right)^2  \right)}}
\\ &
\overset{ \eqref{e65}}{\leq} 
\sqrt{ \Phi_t + 
\frac{ | \beta_{t-1} - \beta_t| ( 1 + 2 \sqrt{8} - \beta_t + \lambda_t)
+ 2 \sqrt{8} \sqrt{ | \beta_{t-1} - \beta_t | | \lambda_t - \lambda_{t-1}| }  
}{| 4\beta_t - (1+\beta_t - \lambda_t)^2  |} }
\\ &
\leq
\sqrt{\Phi_t} + 3 \sqrt{ \frac{ | \beta_{t-1} - \beta_t| }{ | 4\beta_t - (1+\beta_t - \lambda_t)^2  |} } +  3 \sqrt{  \frac{ \sqrt{ | \beta_{t-1} - \beta_t| | \lambda_t - \lambda_{t-1}|} }{ |4\beta_t - (1+\beta_t - \lambda_t)^2| }   }  .
\end{split}
\end{equation}
Since the spectral norm of a block diagonal matrix $M ={\rm diag}(M_1,M_2, \dots, M_d)$ satisfies
$\| M \|_2 = \max_{{i\in [d]}} { \| M_i \|_2 } $,
we can now conclude the following two cases.

When $\beta_{t}$ is a constant for all $t$, we have 
\begin{equation}
\begin{split}
\| P_t^{-1}P_{t-1} \|_2 & \leq \| {\rm Diag}(Q_{t,1},\dots,Q_{t,d})^{-1} {\rm Diag}(Q_{t-1,1},\dots,Q_{t-1,d}) \|_2
\\ &
\overset{\eqref{60}, \eqref{brev1}}{=} \max_{i \in [d]}
\begin{cases}
 \underbrace{ \sqrt{1+ \frac{\eta\lambda_{t,i} - \eta\lambda_{t-1,i}}{ (1+\sqrt{\beta_t})^2 - \eta\lambda_{t,i}  } }  }_{ \text{extra factor} } \quad 
\text{if } \eta\lambda_{t,i} \geq \eta\lambda_{t-1,i} \\
 \underbrace{ \sqrt{1+ \frac{\eta\lambda_{t-1,i} - \eta\lambda_{t,i}}{  \eta\lambda_{t,i}  - (1-\sqrt{\beta_t})^2  } }  }_{ \text{extra factor} } \quad
\text{if } \eta\lambda_{t-1,i} \geq \eta\lambda_{t,i}.
\end{cases}
\end{split}
\end{equation}
When $\beta_t$ is set adaptively, by
\eqref{61} and \eqref{brev1},
we have
\begin{equation}
\begin{split}
& \textstyle \| P_t^{-1}P_{t-1} \|_2  \leq \| {\rm Diag}(Q_{t,1},\dots,Q_{t,d})^{-1} {\rm Diag}(Q_{t-1,1},\dots,Q_{t-1,d}) \|_2
\\ & \textstyle
\leq \max_{i \in [d]}
\begin{cases}
 \underbrace{ \sqrt{1+ \frac{\eta\lambda_{t,i} - \eta\lambda_{t-1,i}}{ (1+\sqrt{\beta_t})^2 - \eta\lambda_{t,i}  } }
+ 3 \sqrt{ \frac{ | \beta_{t-1} - \beta_t| }{| 4\beta_t - (1+\beta_t - \eta\lambda_{t,i})^2| } } +  3  \sqrt{  \frac{ \sqrt{ | \beta_{t-1} - \beta_t| | \eta\lambda_{t,i} - \eta\lambda_{t-1,i}|} }{ |4\beta_t - (1+\beta_t - \eta\lambda_{t,i})^2| }   }
   }_{ \text{extra factor} , \text{ if } \lambda_{t,i} \geq \lambda_{t-1,i} },  
 \\
 \underbrace{ \sqrt{1+ \frac{\eta\lambda_{t-1,i} - \eta\lambda_{t,i}}{  \eta\lambda_{t,i}  - (1-\sqrt{\beta_t})^2  } } 
+ 3 \sqrt{ \frac{ | \beta_{t-1} - \beta_t| }{ |4\beta_t - (1+\beta_t - \eta\lambda_{t,i})^2 |} } +  3  \sqrt{  \frac{ \sqrt{ | \beta_{t-1} - \beta_t| | \eta\lambda_{t,i} - \eta\lambda_{t-1,i}| } }{| 4\beta_t - (1+\beta_t - \eta\lambda_{t,i})^2| }   }
  }_{ \text{extra factor} , \text{ if } \lambda_{t-1,i} \geq \lambda_{t,i} }
\end{cases}
\end{split}
,
\end{equation}

We now have completed the proof.

\end{proof}

\section{Proof of Lemma~\ref{lem:lambdadiff}} \label{app:lem:lambdadiff}

\textbf{Lemma~\ref{lem:lambdadiff}}
\textit{
Suppose that the Hessian of $f(\cdot)$ is $L_{H}$-Lipschitz,
i.e.,
$\| \nabla^2 f(x) - \nabla^2 f(y) \|_2 \leq  L_H \| x - y \|_2,$
for any pair of $x,y$.
Then, 
$| \lambda_{t,i} - \lambda_{{t-1},i} | \leq L_H \| w_t - w_{t-1}\|_2.$
}

\begin{proof}

\begin{equation}
\begin{aligned}
| \lambda_{t,i} - \lambda_{{t-1},i} | 
& \overset{(a)}{ \leq } \| H_t - H_{t-1}\|_{2}
\\ & = 
\| \eta \int_0^1 \nabla^2 f\big( (1-\tau) w_t + \tau w_* \big) d \tau 
-  \eta \int_0^1 \nabla^2 f\big( (1-\tau) w_{t-1} + \tau w_* \big) d \tau
\|_2
\\ &\overset{(b)}{\leq}  \eta L_H \| w_t - w_{t-1}\|_2,
\end{aligned}
\end{equation}
where (a) is due to Theorem~\ref{thm:p} below and (b) is by the assumption that the Hessian is $L_{H}$-Lipschitz.
\end{proof}

\begin{theorem}[Theorem 8.1 in \cite{B07}] \label{thm:p}
Let $A$ and $B$ be Hermitian matrices with eigenvalues 
$\lambda_{1}(A) \geq \lambda_{2}(A) \geq \dots \geq \lambda_{d}(A)$
and 
$\lambda_{1}(B) \geq \lambda_{2}(B) \geq \dots \geq \lambda_{d}(B)$.
Then, 
\[
\max_{j \in [d] } | \lambda_j(A) - \lambda_j(B)	| \leq \| A - B \|_2.
\]
\end{theorem}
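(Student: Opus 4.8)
The plan is to prove this statement via the Courant--Fischer min--max variational characterization of the eigenvalues of a Hermitian matrix, which is the standard route to Weyl-type perturbation bounds. First I would record the two facts I intend to use. Fact one: for any Hermitian matrix $M \in \mathbb{C}^{d \times d}$ with eigenvalues ordered as $\lambda_1(M) \geq \dots \geq \lambda_d(M)$, Courant--Fischer gives, for each $j \in [d]$,
\[
\lambda_j(M) = \max_{\substack{V \subseteq \mathbb{C}^d \\ \dim V = j}} \ \min_{\substack{x \in V \\ \|x\|_2 = 1}} x^* M x.
\]
Fact two: the difference $E := A - B$ is again Hermitian (a difference of Hermitian matrices is Hermitian), so by the paper's definition $\|E\|_2 = \sqrt{\lambda_{\max}(EE^*)} = \sqrt{\lambda_{\max}(E^2)} = \max_i |\lambda_i(E)|$; consequently the Rayleigh quotient of $E$ is confined to the band $-\|E\|_2 \leq x^* E x \leq \|E\|_2$ for every unit vector $x$.

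Then I would fix an index $j$ and bound $\lambda_j(A)$ from below. Let $V^\star$ be a $j$-dimensional subspace achieving the maximum in the Courant--Fischer formula for $B$, so that $\min_{x \in V^\star, \|x\|_2 = 1} x^* B x = \lambda_j(B)$. Writing $A = B + E$ and restricting the outer maximization for $A$ to this particular subspace $V^\star$ can only decrease the value, giving
\[
\lambda_j(A) \geq \min_{\substack{x \in V^\star \\ \|x\|_2=1}} x^*(B+E)x \geq \min_{\substack{x \in V^\star \\ \|x\|_2=1}} x^* B x \ - \ \|E\|_2 = \lambda_j(B) - \|E\|_2,
\]
where the middle step uses the superadditivity $\min(f+g) \geq \min f + \min g$ over the common domain $V^\star$ together with Fact two applied to $g(x) = x^* E x$.

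Finally I would invoke symmetry. The roles of $A$ and $B$ are interchangeable and $\|A-B\|_2 = \|B-A\|_2$, so repeating the argument with $A$ and $B$ swapped yields $\lambda_j(B) \geq \lambda_j(A) - \|A-B\|_2$. Combining the two one-sided bounds gives $|\lambda_j(A) - \lambda_j(B)| \leq \|A-B\|_2$ for the fixed $j$; since $j$ was arbitrary, taking the maximum over $j \in [d]$ delivers the claimed inequality. The only place that needs care is the middle inequality in the displayed chain: one must restrict to the \emph{same} subspace $V^\star$ for both $A$ and $B$ so that the two minimizations share a domain, and then apply the uniform bound $x^* E x \geq -\|E\|_2$ over that domain. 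This is precisely where Fact two (that $E$ is Hermitian, not merely an arbitrary perturbation) is essential, since for a non-Hermitian $E$ the Rayleigh quotient need not be real and the band bound fails.
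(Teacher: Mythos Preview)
Your proof is correct: this is the standard Weyl perturbation argument via the Courant--Fischer min--max principle, and each step (restricting the outer maximum to the optimizing subspace $V^\star$ for $B$, the superadditivity $\min(f+g)\geq \min f+\min g$ on a common domain, the two-sided Rayleigh-quotient bound for the Hermitian perturbation $E$, and the symmetry swap) is sound.

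The paper, however, does not prove this statement at all. It is quoted verbatim as Theorem~8.1 of the external reference \cite{B07} and used as a black box in the proof of Lemma~\ref{lem:lambdadiff}. So there is no ``paper's own proof'' to compare against; you have supplied a complete, self-contained argument where the paper simply invokes a citation.
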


\section{Proof of Theorem~\ref{thm:PL}} \label{app:thm:PL}

\textbf{Theorem~\ref{thm:PL}}
\textit{
Let
$\theta = 2 \left( \frac{L}{4} \left( 1 + \frac{1}{c_{\eta}}\right) - c_{\mu} \mu    \right) > 0$ for any $c_{{\eta}} \in (0,1]$ 
and any $c_{{\mu}} \in (0,\frac{1}{4}]$.
Set the step size $\eta = \frac{c_{\eta} }{L}$ and set the momentum parameter $\beta_{t}$ so that for all~$t$,
$\beta_t \leq \sqrt{ 
\left(1 - \tilde{c} c_{\eta}^2 \frac{\mu}{L}  \right) 
\left( 1 - \frac{c_{\mu} \mu }{  \frac{L}{4} \left( 1 + \frac{1}{c_{\eta}}\right) + \frac{\theta}{2} }
\right)}$
for some constant $\tilde{c} \in (0,1]$.
Then, HB has
\begin{equation} 
V_t
 \leq \left( 1 - \tilde{c} c_{\eta}^2 \frac{ \mu }{L }  \right)^t V_0
 = \left( 1 - \Theta\left( \frac{ \mu }{L} \right)  \right)^t V_0,
\end{equation}
where the Lyapunov function $V_{t}$ is defined on \eqref{lyp}.
}

\begin{proof}
From the update of HB, we have following three basic equalities and inequalities:
\begin{align}
w_t - w_{t-1} & = - \eta \nabla f(w_{t-1}) + \beta_{t-1}  (w_{t-1} - w_{t-2})
\\ \| w_t - w_{t-1} \|^{2} & = \eta^2 \| \nabla f(w_{t-1}) \|^2 + \beta_{t-1}^2 \|    w_{t-1} - w_{t-2}  \|^2 - 2 \eta \beta_{t-1} \langle \nabla f(w_{t-1}),  w_{t-1} - w_{t-2} \rangle . \label{K1}
\\ f(w_t) & \leq f(w_{t-1}) + \langle \nabla f(w_{t-1}), w_{t} - w_{{t-1}} \rangle + \frac{L}{2} \| w_t - w_{t-1}\|^2 .
\end{align}
Combining the above three, we get
\begin{equation} \label{K2}
\begin{split}
f(w_t) & \leq f(w_{t-1}) - \eta \| \nabla f(w_{t-1}) \|^2 
+ \beta_{t-1}  \langle \nabla f(w_{t-1}),   w_{t-1} - w_{t-2} \rangle
+ \frac{\eta^2 L}{2} \| \nabla f(w_{t-1}) \|^2
\\ & + \frac{ L \beta_{t-1}^2 }{2} \|    w_{t-1} - w_{t-2}  \|^2 
- L \eta \beta_{t-1} \langle \nabla f(w_{t-1}),   w_{t-1} - w_{t-2} \rangle.
\end{split}
\end{equation}
Multiplying (\ref{K1}) by $\theta$ and adding it to (\ref{K2}), we have
\begin{equation}
\begin{split}
f(w_t) + \theta \| w_t - w_{t-1} \|^2
& \leq f(w_{t-1}) - \eta \| \nabla f(w_{t-1}) \|^2 
+  \beta_{t-1} \langle \nabla f(w_{t-1}),    w_{t-1} - w_{t-2}  \rangle
+ \frac{\eta^2 L}{2} \| \nabla f(w_{t-1}) \|^2
\\ & + \frac{ L \beta_{t-1}^2 }{2} \|   w_{t-1} - w_{t-2}  \|^2 
- L \eta \beta_{t-1} \langle \nabla f(w_{t-1}),   w_{t-1} - w_{t-2} \rangle
\\ &
+ \theta \left( \eta^2 \| \nabla f(w_{t-1}) \|^2 +  \beta_{t-1}^2 \|    w_{t-1} - w_{t-2}  \|^2 - 2 \eta \beta_{t-1}  \langle \nabla f(w_{t-1}),  w_{t-1} - w_{t-2} \rangle    \right).
\end{split}
\end{equation}
After grouping some terms, we obtain
\begin{equation} \label{79}
\begin{split}
f(w_t) + \theta \| w_t - w_{t-1} \|^2
& \leq f(w_{t-1})
+ \left(   \frac{L}{2}  + \theta \right) \beta_{t-1}^2 \|   w_{t-1} - w_{t-2}\|^2
\\ & \quad + \left( \frac{\eta^2 L}{2}  - \eta + \theta \eta^2 \right) \| \nabla f(w_{t-1}) \|^2
\\ & \quad 
+ \left(  1- L \eta - \theta 2 \eta  \right) \beta_{t-1}  \langle \nabla f(w_{t-1}),  w_{t-1} - w_{t-2} \rangle,
\end{split}
\end{equation}
which can be further bounded as
\begin{equation} \label{1}
\begin{split}
f(w_t) + \theta \| w_t - w_{t-1} \|^2
& \leq f(w_{t-1})
+ (   \frac{L}{2}  + \theta ) \beta_{t-1}^2 \|   w_{t-1} - w_{t-2}\|^2
+ ( \frac{\eta^2 L}{2}  - \eta + \theta \eta^2 ) \| \nabla f(w_{t-1}) \|^2
\\ &  \quad 
+ \frac{ c_{\eta} \left(  1- L \eta - \theta 2 \eta  \right) }{L} \| \nabla f(w_{t-1}) \|^2 + \frac{L}{4 c_{\eta}} \left(  1- L \eta - \theta 2 \eta  \right)  \beta_{t-1}^2 \|   w_{t-1} - w_{t-2}\|^2
\\ &  =
f(w_{t-1}) - \frac{ c_{\eta}^2 }{L} \left( \frac{1}{2} + \frac{\theta}{L} \right)
 \| \nabla f(w_{t-1})\|^2 + \left( \frac{L}{4} \left(1+ \frac{1}{c_{\eta}}\right) + \frac{\theta}{2}  \right)
\beta_{t-1}^2 \|   w_{t-1} - w_{t-2}\|^2,
\end{split}
\end{equation}
where we used $\langle a, b \rangle \leq \|a\| \|b\| \leq \frac{1}{2} \| a \|^{2} + \frac{1}{2} \| b\|^{2}$ with $a \leftarrow \sqrt{\frac{2 c_{\eta}}{L} }  \nabla f(w_{t-1}) $ and $b \leftarrow \beta_{t-1} \sqrt{ \frac{L}{2c_{\eta}} } ( w_{t-1} - w_{t-2} )$ to bound the last term on \eqref{79},
and we get the equality by grouping the common terms and replacing $\eta = \frac{c_{\eta}}{L}$.

Now subtracting $f(w_*)$ from both sides of \eqref{1} and using the PL inequality, i.e., $2 \mu ( f(w_{t-1}) - f(w_*) ) \leq \| \nabla f(w_{t-1}) \|^{2}$, we get 
\begin{equation} \label{81}
f(w_t) - f(w_*) + \theta \| w_t - w_{t-1} \|^2
\leq \left(  1 - \frac{c_{\eta}^2  \mu}{L} \left( 1 + \frac{2 \theta}{L} \right)   \right) \left( f(w_{t-1}) - f(w_*) \right) +
\left( \frac{L}{4} \left(1+ \frac{1}{c_{\eta}}\right) + \frac{\theta}{2}  \right)
\beta_{t-1}^2 \|   w_{t-1} - w_{t-2}\|^2.
\end{equation}
To complete the proof, we need to bound the r.h.s.~of inequality \eqref{81} by
\[\left(1 - \tilde{c} c_{\eta}^2 \frac{ \mu }{L }   \right)
\left( f(w_{t-1}) - f(w_*) + \theta \| w_{t-1} - w_{t-2} \|^2
  \right),
\]
for some $\tilde{c} \in (0,1]$.
Since $-(1+ \frac{2 \theta}{L} ) \leq - \tilde{c}$, we only need
$\beta_{t-1}^2 \leq 
(1 - \tilde{c} c_{\eta}^2 \frac{ \mu }{L }   ) 
\left( 
\frac{ \theta }{ \frac{L}{4} \left(1+ \frac{1}{c_{\eta}}\right) + \frac{\theta}{2} } \right) 
= \left( 1 - \tilde{c} c_{\eta}^2 \frac{ \mu }{L }   \right) 
\left( 1 - \frac{c_{\mu} \mu }{  \frac{L}{4} \left( 1 + \frac{1}{c_{\eta}}\right) + \frac{\theta}{2} } \right)
$.

\end{proof}


\section{Proof of Theorem~\ref{thm:meta}} \label{app:thm:meta}

\textbf{Theorem~\ref{thm:meta}}
\textit{
Suppose assumption $\clubsuit$ holds
and that \avg{\lambda_{\min}(H_t)}{w_*} 
and co-diagonalization $\spadesuit$ hold for all $t$.
Let
$\theta := 2 \left( \frac{L}{4} \left( 1 + \frac{1}{c_{\eta}}\right) - c_{\mu} \mu    \right) > 0$, where $c_{{\mu}} \in (0,\frac{1}{4}]$.
Set the step size $\eta = \frac{c_{\eta} }{L}$
for any constant $c_{\eta} \in (0,1]$
and set the momentum parameter 
$\beta_t = \left(1- c \sqrt{  \eta \lambda_{\min}(H_t) } \right)^2 $ for some $c \in (0,1)$
satisfying 
$\beta_t \leq
\sqrt{
\left(1 - \tilde{c} c_{\eta}^2 \frac{\mu}{L}  \right) 
\left( 1 - \frac{c_{\mu} \mu }{  \frac{L}{4} \left( 1 + \frac{1}{c_{\eta}}\right) + \frac{\theta}{2} }
\right) }$
for some constant $\tilde{c} \in (0,1]$.
Then, for all $t$, the iterate $w_{t}$ of HB satisfies \eqref{eq:V}, i.e.,
the Lyapunov function $V_{t}$ decays linearly for all $t$.
Furthermore,
there exists a time 
$t_{0} = \tilde{\Theta}\left( \frac{1}{\tilde{c}} \frac{L}{\mu} \right) $ 
such that for all $t \geq t_{0}$, the instantaneous rate $\| \Psi_t \|_2$ at $t$ is
\begin{equation} 
\| \Psi_t \|_2 = 1 - \frac{c \sqrt{c_{\eta}}}{2} \frac{1}{\sqrt{\kappa_t}}
= 1 - \Theta\left( \frac{1}{\sqrt{\kappa_t} }  \right), 
\end{equation}
where $\kappa_t:= \frac{L}{\lambda_{\min}(H_t)}$.
Consequently, 
\[
\| w_{T+1} - w_* \| = O\left( \prod_{t=t_0}^{T} \left(  1 - \Theta\left(\frac{1}{ \sqrt{\kappa_{t}}} \right)  \right) \right) \| w_{t_0} - w_* \|.
\]
}

\begin{proof}
Let us first recall that 
$\{ \lambda_{{t,i}} \}_{{i=1}}^{d}$ are the eigenvalues of $H_t := \int_0^1 \nabla^2 f\big( (1-\tau) w_t + \tau w_* \big) d \tau
$ in the decreasing order. 
Hence, we can write $\beta_t = (1-  c \sqrt{\eta \lambda_{t,d}} )^{2}$
for some $c \in (0,1)$, where $\lambda_{{t,d}} = \lambda_{{\min}}(H_t) \geq \lambda_{*}> 0$ for some constant $\lambda_{*}>0$.

From Lemma~\ref{vb}, we know the instantaneous rate $\|\Psi_t\|_{2}$ satisfies
\begin{align} \label{ext:0}
\|\Psi_t\|_2&=
\begin{cases}
\left( \sqrt{\beta_t} \right) \times
\underbrace{ \left( \max_{i \in [d]} \sqrt{1+ \frac{\eta \lambda_{t,i} -\eta \lambda_{t-1,i}}{ (1+\sqrt{\beta_t})^2 - \eta \lambda_{t,i}  } }
 + \mathbbm{1}\{\beta_t \neq \beta_{t-1}\} \phi_{t,i}
\right) }_{\text{extra factor 1} },
\text{ if } \lambda_{t,i} \geq \lambda_{t-1,i} \\
\left( \sqrt{\beta_t} \right) 
\times
\underbrace{ \left( \max_{i \in [d]}
\sqrt{1+ \frac{\eta \lambda_{t-1,i} - \eta \lambda_{t,i}}{ \eta \lambda_{t,i}  - (1-\sqrt{\beta_t })^2  } } 
 + \mathbbm{1}\{\beta_t \neq \beta_{t-1}\} \phi_{t,i}
\right) }_{\text{extra factor 2} }, 
\text{ if } \lambda_{t-1,i} \geq \lambda_{t,i}. \\
\end{cases}
\end{align}

Let us first analyze the denominator of the first term in the extra factors.
There are two cases. 
\begin{itemize}
\item \textbf{Suppose $\lambda_{t,i} \geq \lambda_{t-1,i}$ (the first term in the extra factor is $\sqrt{1+ \frac{\eta \lambda_{t,i}-\eta \lambda_{t-1,i}}{\left(1+\sqrt{\beta_t}\right)^2 - \eta \lambda_{t,i}} }$)
:}\\
We first show that the denominator $\left(1+\sqrt{\beta_t}\right)^2 - \eta \lambda_{t,i}$ is non-zero. If it were zero, then it implies that
\begin{equation}
\begin{split}
\left(1+\sqrt{\beta_t}\right) =  \sqrt{ \eta \lambda_{t,i} },
\end{split}
\end{equation}
which would contradict to the choice of $\beta_{t} = (1-  c \sqrt{\eta \lambda_{t,d}} )^{2} > (1 - \sqrt{\eta \lambda_{t,i} })^2$ for all $i \in [d]$.
It suffices to assume that the denominator is lower bounded by a positive number, i.e., 
\begin{equation} \label{eq:l1}
\left(1+\sqrt{\beta_t}\right)^2 - \eta \lambda_{t,i} \geq c_{0} > 0.
\end{equation}
Otherwise, given that $\lambda_{t,i} \geq \lambda_{t-1,i}$, we can trivially upper-bound $\sqrt{1+ \frac{\eta \lambda_{t,i}-\eta \lambda_{t-1,i}}{\left(1+\sqrt{\beta_t}\right)^2 - \eta \lambda_{t,i}} }$ by $1$.
\item \textbf{Suppose $\lambda_{t,i} \leq \lambda_{t-1,i}$ (the first term in the extra factor in $\sqrt{1+\frac{\eta \lambda_{t-1,i} - \eta \lambda_{t,i} }{ \eta \lambda_{t,i} - \left(1- \sqrt{\beta_t }\right)^2 }}  $):}\\
We need to lower-bound $ \eta \lambda_{t,i} - \left(1-\sqrt{\beta_t}\right)^2 $.
We have
\begin{equation} \label{eq:l2}
\begin{aligned}
\eta \lambda_{t,i} - \left(1-\sqrt{\beta_t }\right)^2  = \eta \lambda_{t,i} - ( 1 - (1 - c \sqrt{\eta \lambda_{t,d}} ) )^2 
= 
\eta \lambda_{t,i} - c^2 \eta \lambda_{t,d} \geq \eta \lambda_{t,d} (1-c^2)  := c_1 > 0.
\end{aligned}
\end{equation}
\end{itemize}
So the first term in the extra factor can be bounded as
\begin{equation}
\max\left( \max_{i \in [d]} \sqrt{1+ \frac{\eta \lambda_{t,i} - \eta \lambda_{t-1,i}}{ (1+\sqrt{\beta_t})^2 - \eta \lambda_{t,i}  }} ,
\max_{i \in [d]} \sqrt{1+ \frac{\eta \lambda_{t-1,i} - \eta \lambda_{t,i}}{ \eta \lambda_{t,i}  - (1-\sqrt{\beta_t})^2  } }
\right)
\leq \max_{i \in [d]} \sqrt{ 1 + \frac{ \eta | \lambda_{t-1,i} - \lambda_{t,i}|   }{\min(c_0,c_1)}  } .
\end{equation}
Using Lemma~\ref{lem:lambdadiff}, we further have 
\begin{equation} \label{ext:1}
\max\left( \max_{i \in [d]} \sqrt{1+ \frac{\eta \lambda_{t,i} - \eta \lambda_{t-1,i}}{ (1+\sqrt{\beta_t})^2 - \eta \lambda_{t,i}  }} ,
\max_{i \in [d]} \sqrt{1+ \frac{\eta \lambda_{t-1,i} - \eta \lambda_{t,i}}{ \eta \lambda_{t,i}  - (1-\sqrt{\beta_t})^2  } }
\right)
\leq 
\sqrt{ 1 + \frac{ \eta L_H \| w_{t-1} - w_t \|_2  }{\min(c_0,c_1)}  }.
\end{equation}
It is noted that if
a constant value of the $\beta$ is used, e.g., $\beta_t = (1-  c \sqrt{\eta \lambda_{*}} )^{2} = (1- \frac{c\sqrt{c_{\eta}}}{ \sqrt{\kappa}} )^2  $ for some $c \in (0,1)$, 
where $\kappa:= \frac{L}{\lambda_*}$,
then \eqref{ext:1} still holds for some $c_{0},c_{1}>0$, which can be seen by tracing the derivations in \eqref{eq:l1} and \eqref{eq:l2}.

Now let us switch to bounding the second term of the extra factors.
The second term is zero if the momentum parameter is set to a constant value during the iterations, e.g., $\beta_t = (1-  c \sqrt{\eta \lambda_{*}} )^{2} = (1- \frac{c\sqrt{c_{\eta}}}{ \sqrt{\kappa}} )^2  $.
If the momentum parameter is set adaptively as
$\beta_t = (1-  c \sqrt{\eta \lambda_{t,d}} )^{2}$ 
for some $c \in (0,1)$,
then from Lemma~\ref{lem:lambdadiff} \eqref{phidef} we need to bound the r.h.s. of the following,
\begin{equation} \label{phi}
\phi_{t,i}\leq 3 \sqrt{ \frac{ | \beta_{t-1} - \beta_t| }{ |4\beta_t - (1+\beta_t - \eta\lambda_{t,i})^2| } } +  3  \sqrt{  \frac{ \sqrt{ | \beta_{t-1} - \beta_t| | \eta\lambda_{t,i} - \eta\lambda_{t-1,i}|} }{ |4\beta_t - (1+\beta_t - \eta\lambda_{t,i})^2 |
}}. 
\end{equation}
From \eqref{val} and the proof of Lemma~\ref{lem:lambdadiff} (i.e., \eqref{b} - \eqref{61}), the term $| 4\beta_t - (1+\beta_t - \eta\lambda_{t,i})^2 |$ in r.h.s. of \eqref{phi} is actually $2 \left( \Im{z_{t,i}} \right)^2$,
i.e.,  $| 4\beta_t - (1+\beta_t - \eta\lambda_{t,i})^2 | = 2 \left( \Im{z_{t,i}} \right)^2$,
where $z_{t,i}$, $\bar{z}_{t,i}$ are eigenvalues of
$\tilde{\Sigma}_{t,i}:= \begin{bmatrix}
1+\beta_t-\lambda_i& -\beta_t\\1& 0
\end{bmatrix}$.
When $\beta_t > \left( 1 - \sqrt{\eta \lambda_{t,d} }  \right)^2 $, the sub-matrix $\tilde{\Sigma}_{i}$ has complex eigenvalues and hence 
$(\Im{z_i})^2 > 0$.
Since $\beta_t = (1-  c \sqrt{\eta \lambda_{t,d}} )^{2}$ 
for some $c \in (0,1)$, we know
\begin{equation} \label{eq:33}
| 4\beta_t - (1+\beta_t - \eta\lambda_{t,i})^2 | \geq c_3^2 > 0
\end{equation} 
for some constant $c_{3} > 0$. On the other hand, the factor
$| \beta_{t-1} - \beta_t|$ in r.h.s. of \eqref{phi} can be bounded as
\begin{equation} \label{ext:3}
\begin{split}
| \beta_{t-1} - \beta_t| & = 
\left| \left( 1 - c \sqrt{\eta \lambda_{t-1,d} }  \right)^2 
- \left( 1 - c \sqrt{\eta \lambda_{t,d} }  \right)^2  \right| 
\\ &
= \left| c \left(  \sqrt{\eta \lambda_{t,d}} - \sqrt{\eta \lambda_{t-1,d} } \right) 
\left(  2  - c \left(  \sqrt{\eta \lambda_{t,d}} + \sqrt{\eta \lambda_{t-1,d} }  \right)  \right) \right|
\\ &
\leq c \sqrt{ |\eta \lambda_{t,d} - \eta \lambda_{t-1,d}| }  
\left| \left(  2  - c \left(  \sqrt{\eta \lambda_{t,d}} + \sqrt{\eta \lambda_{t-1,d} }  \right)  \right) \right|
\\ & \leq 2 c \sqrt{ \eta L_H \| w_t - w_{t-1 }\|_2 },
\end{split}
\end{equation}
where the last inequality uses Lemma~\ref{lem:lambdadiff}.

Hence by \eqref{ext:0}, (\ref{ext:1})-(\ref{ext:3}), we get
\begin{equation} \label{ext:2}
\begin{split}
& \|\Psi_t \|_2   \leq \\ & \sqrt{\beta_t } \left(  \sqrt{ 1 + \frac{ \eta L_H \| w_{t-1} - w_t \|  }{\min(c_0,c_1)}  }  + \mathbbm{1}
 [\beta_t \neq \beta_{t-1} ] 
\left( \frac{3}{c_3} \left( \sqrt{2c \sqrt{ \eta L_H \| w_t -w_{t-1}\|_2 } } + \sqrt{ \sqrt{2c} \sqrt{\eta L_H \| w_t - w_{t-1}\|_2} }   \right)    \right)
  \right).
\end{split}
\end{equation}
The strategy now is to show the following two items hold for all iterations $t \geq t_{0}$
for some $t_{0}$.
\begin{itemize}
\item \textbf{(first item)}
$\sqrt{ 1 + \frac{ \eta L_H \| w_{t-1} - w_t \|  }{\min(c_0,c_1)}  } 
\leq 1 + \frac{c}{4} \sqrt{\eta \lambda_{\min}(H_t) } $
\item \textbf{(second item)}
$\left( \frac{3}{c_3} \left( \sqrt{2c \sqrt{ \eta L_H \| w_t -w_{t-1}\|_2 } } + \sqrt{ \sqrt{2c} \sqrt{ \eta L_H \| w_t - w_{t-1}\|_2} }   \right)    \right)
\leq \frac{c}{4} \sqrt{\eta \lambda_{\min}(H_t) }   $.
\end{itemize}
The above items would allow us to show that
\begin{equation} \label{ext:2}
\|\Psi_t \|_2 
\leq 
\left(1 - c \sqrt{\eta \lambda_{\min}(H_t) } \right)
\left(  1 + \frac{c}{4} \sqrt{\eta \lambda_{\min}(H_t) } + \frac{c}{4} \sqrt{\eta \lambda_{\min}(H_t) } \right)
\leq 1 - \frac{c}{2} \sqrt{\eta \lambda_{\min}(H_t) }  =
1 - \Theta \left( \sqrt{ \frac{ \lambda_{\min}(H_t) }{ L } } \right),
\end{equation}
which is an accelerated linear rate.

So now let us analyze the number of iterations required for the first item to hold.
We have
\begin{equation}
\begin{split}
\log \left( \sqrt{ 1 + \frac{ \eta L_H \| w_{t-1} - w_t \|  }{\min(c_0,c_1)}  } \right)
& \leq 
\frac{1}{2} \log  \left(  1 + \frac{ \eta L_H \| w_{t-1} - w_t \|  }{\min(c_0,c_1)} \right)
\\ & \leq
\frac{ \eta L_H \| w_{t-1} - w_t \|  }{2 \min(c_0,c_1)}
\\ & \overset{ (a) }{ \leq }
\frac{ \eta L_H \sqrt{ \left( 1 - \tilde{c} c_{\eta}^2  \frac{\mu}{L}  \right)^t V_0} }{ 2 \min(c_0,c_1) } \sqrt{ \frac{1}{\theta}  }
\\ & \overset{ (b) }{ \leq } \log \left( 1 + \frac{c}{4} \sqrt{\eta \lambda_{\min}(H_t) }  \right),
\end{split}
\end{equation}
where (a) is by Theorem~\ref{thm:PL} and (b) holds when $t = \Theta\left( \frac{1}{\tilde{c} c_{\eta}^2 } \frac{L}{\mu} \log \left( \frac{L_H^2 V_0}{ \min\{c_0^2,c_1^2 \} \theta }   \right)  \right)
 $ iterations.
For the first term in the second item, we have
\begin{equation}
\begin{split}
\frac{3}{c_3} \left( \sqrt{2c \sqrt{ \eta L_H \| w_t -w_{t-1}\|_2 } }  \right) 
\leq \frac{c}{8} \sqrt{\eta \lambda_{\min}(H_t) }  
\iff \| w_t - w_{t-1} \|_2 \leq \left( \frac{c c_3 \sqrt{\eta \lambda_{\min}(H_t) } }{24 \sqrt{2 c} (\eta L_H)^{1/4} } \right)^4. 
\end{split}
\end{equation}
Using Theorem~\ref{thm:PL} again, we have 
\begin{equation}
\| w_t - w_{t-1} \|_2 \leq \sqrt{ \left( 1 - \tilde{c} c_{\eta}^2  \frac{\mu}{L}  \right)^t V_0 }
\sqrt{ \frac{ 1 }{ \theta } }
\leq \left( \frac{c c_3 \sqrt{\eta \lambda_{\min}(H_t) } }{24 \sqrt{2 c} (\eta L_H)^{1/4} } \right)^4,
\end{equation}
where the last inequality holds when $t= \Theta \left( \frac{1}{\tilde{c} c_{\eta}^2 } \frac{L}{\mu} \log \left(\frac{ L^2 L_H^2  V_0 }{ c_3^8 \lambda_{*}^4 \theta }  \right) \right)$.
For the second term in the second item, we have
\begin{equation}
\frac{3}{c_3} \sqrt{ \sqrt{2c} \sqrt{\eta L_H \| w_t - w_{t-1}\|_2} }
\leq \frac{c}{8} \sqrt{\eta \lambda_{\min}(H_t) }
\iff  
\| w_t - w_{t-1} \|_2 \leq 
\left( \frac{c c_3 \sqrt{\eta \lambda_{\min}(H_t) } }{24 (2 c)^{1/4} (\eta L_H)^{1/4} } \right)^4.
\end{equation}
Using Theorem~\ref{thm:PL} again, we have 
\begin{equation}
\| w_t - w_{t-1} \|_2 \leq \sqrt{ \left( 1 - \tilde{c} c_{\eta}^2  \frac{\mu}{L}  \right)^t V_0 }
\sqrt{ \frac{ 1 }{ \theta } }
\leq \left( \frac{c c_3 \sqrt{\eta \lambda_{\min}(H_t) } }{24 (2 c)^{1/4} (\eta L_H)^{1/4} } \right)^4,
\end{equation}
where the last inequality holds when $t= \Theta \left( \frac{1}{\tilde{c} c_{\eta}^2 } \frac{L}{\mu} \log \left( \frac{L^2 L_H^2 V_0 }{ c_3^8 \lambda_{*}^4 \theta} \right) \right)$.

So now we can conclude that after $t_{0} = \tilde{\Theta}\left(  \frac{L}{\mu} \right)$ number of iterations, 
where $\tilde{\Theta}\left( \cdot  \right)$ hides the following logarithmic factor
\begin{equation} \label{103}
\max \left\{
\log \left( \frac{L_H^2 V_0}{ \min\{c_0^2,c_1^2 \} \theta }   \right),
\log \left(\frac{ L^2 L_H^2  V_0 }{ c_3^8 \lambda_{*}^4 \theta }  \right) \right \},
\end{equation}  
and $c_0, c_1, c_3 > 0$ are constants defined in \eqref{eq:l1}, \eqref{eq:l2}, \eqref{eq:33},
the instantaneous rate satisfies
$\| \Phi_t \|_2 \leq  1  - \frac{c}{2} \sqrt{\eta \lambda_{\min}(H_t) }
= 1 - \frac{c \sqrt{c_{\eta}}}{2} \frac{1}{\sqrt{\kappa_t}} =  1 - \Theta\left( \frac{1}{\sqrt{\kappa_t} }  \right)
$.

\end{proof}

\section{Proof of Lemma~\ref{lem:syn} and Lemma~\ref{lem:broad} } \label{app:lem:syn}

\textbf{Lemma~\ref{lem:syn}}
\textit{
Let $\alpha \in (1,4.34]$.
The function $f(w)$ in \eqref{obj:syn} is twice differentiable and has $L=2+2\alpha$-Lipschitz, $L_H= 4 \alpha$-Lipschitz Hessian, and its global optimal point is $w_{*}=0$.
Furthermore, it is non-convex but satisfies $\mu$-PL 
with $\mu =\frac{ \left(2+ \frac{sin(2w)}{w} \alpha\right)^2}{2+2\alpha} \geq \frac{ \left( 2- 0.46 \alpha \right)^2}{2+2 \alpha}$ at any $w$ and the average Hessian is $H_f(w) = 2 + \frac{sin(2w)}{w} \alpha > 0$, where $|\frac{sin(2w)}{w}| \leq 0.46$. 
Therefore, the condition \avg{\lambda_*}{w_*} holds with $\lambda_{*} = 2-0.46 \alpha$. 
}

\begin{proof}
Consider
\begin{equation}
f(w) = w^2 + \alpha\sin^2(w),
\end{equation}
for some constant $\alpha  \in (1,4.34]$, whose minimizer is $w_*=0$.
Its gradient and Hessian are
\begin{equation}
\begin{split}
\nabla f(w) &= 2w + 2\alpha \sin(w) \cos(w)
\\ H(w) &= 2 + 2 \alpha (\cos^2(w) - \sin^2(w) ).
\end{split}
\end{equation}
It is clear that $H(w)<0$ at some $w$ for any $\alpha >1$.
Also, for any two points $x,y$, we have $| H(x)- H(y) | =| 2 \alpha ( \cos(2x) - \cos(2y) )| \leq 4 \alpha | x- y|$, where we used the mean value theorem.
So the function has $L_{H}= 4 \alpha$-Lipschitz Hessian.

Now consider the average Hessian $H_f(w):=\int_{0}^{1} H( \theta w + (1-\theta) w_*) d\theta$. We have
\begin{equation}
\begin{split}
\int_{0}^{1} H( \theta w + (1-\theta ) w_*) d\theta
& = \int_0^1 \left( 2 + 2 \alpha (\cos^2(\theta w) - \sin^2(\theta w) )  \right) d\theta
= 2 + 2 \alpha \int_0^1 (\cos^2(\theta w) - \sin^2(\theta w) ) d\theta
\\ &
= 2 + 2 \alpha \int_0^1 \cos(2\theta w) d\theta
\\ &
= 2 + \frac{\alpha}{w} \sin(2\theta w) |_0^1 
= 2 + \frac{\alpha \sin(2w)}{w}  \overset{Lemma~\ref{lem:sin}}{ >} 2 - 0.46 \alpha > 0.  
\end{split}
\end{equation}

Also, since the Hessian $H(w)$ can be bounded by $2+2\alpha$, we know $f(\cdot)$ is $2+2\alpha$-smooth.

By Lemma~\ref{lem:AOPL}, we have
$f(w)$ is $\mu-$PL at w, where $\mu= \frac{ (2 + \frac{\alpha \sin(2w)}{w} )^2 }{2+2 \alpha} \geq \frac{(2-0.46\alpha)^2}{2+2\alpha}$.
We now have completed the proof.
\end{proof}

\begin{lemma} \label{lem:sin}
We have
$g(x):=\frac{\sin(2x)}{x} \geq  -0.46 $.
\end{lemma}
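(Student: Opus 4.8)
The plan is to prove the global lower bound $g(x) = \frac{\sin(2x)}{x} \ge -0.46$ by reducing to $x>0$ and splitting the positive half-line into three regions. First I would note that $g$ is even, since $g(-x) = \frac{\sin(-2x)}{-x} = \frac{\sin(2x)}{x} = g(x)$, and that $g(0)=2$ in the limit, so it suffices to treat $x>0$. For $x>0$ the claim is equivalent, after multiplying through by $x>0$, to showing $\sin(2x) + 0.46\,x \ge 0$, which is the inequality I would actually verify.

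Two of the three regions are immediate. On $(0, \frac{\pi}{2}]$ we have $2x \in (0,\pi]$, so $\sin(2x)\ge 0$ and hence $g(x)\ge 0 > -0.46$. On $[\frac{1}{0.46},\infty)$ the crude bound $\sin(2x)\ge -1$ gives $g(x)\ge -\frac{1}{x} \ge -0.46$, precisely because $x \ge \frac{1}{0.46}$. The only region requiring real work is the middle one, $(\frac{\pi}{2}, \frac{1}{0.46})$, where $\sin(2x)$ is negative and the crude bound is too lossy.

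For the middle region I would substitute $y=2x$, so that $y$ ranges over $(\pi, \frac{2}{0.46})$, and set $p(y):=\sin y + 0.23\,y$; the target $\sin(2x)+0.46\,x\ge 0$ becomes exactly $p(y)\ge 0$. The key observation is that $\frac{2}{0.46}\approx 4.35 < \frac{3\pi}{2}$, so $\cos y$ is increasing on the whole interval (its derivative $-\sin y$ is positive there) and attains its maximum at the right endpoint, where $\cos(\frac{2}{0.46})\approx -0.356 < -0.23$. Consequently $p'(y)=\cos y + 0.23 < 0$ throughout, so $p$ is strictly decreasing and its infimum is the right-endpoint value. Here the arithmetic is pleasantly clean: since $0.23\cdot \frac{2}{0.46}=1$, we get $p(\frac{2}{0.46})=\sin(\frac{2}{0.46})+1$, and because $\sin(\frac{2}{0.46})>-1$ this is strictly positive, finishing the region and hence the proof.

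The main obstacle is exactly this endpoint evaluation $p(\frac{2}{0.46})>0$: its margin is small (the value is $\approx 0.066$), so an overly crude estimate of $\sin(\frac{2}{0.46})$ would fail, and one must pin down $\sin(4.348)$ to a couple of decimals. This tightness is intrinsic rather than an artifact of the splitting — one can check by calculus that the true global minimum of $g$ occurs at the first positive critical point, where $\frac{d}{dx}\frac{\sin 2x}{x}=0$ forces $\tan(2x)=2x$ and hence $g=2\cos(2x)=-\frac{2}{\sqrt{1+(2x)^2}}$; the smallest such root lies in $(\pi,\frac{3\pi}{2})$ and yields a minimum of about $-0.4345$, comfortably above $-0.46$ but close enough that the threshold cannot be loosened much. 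I would include this critical-point computation only as a cross-check, since the three-region argument is the cleanest route to a fully rigorous bound.
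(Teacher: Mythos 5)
Your proof is correct, and it takes a genuinely different route from the paper. The paper differentiates $g$, characterizes the critical points by $\tan(2x)=2x$, notes that at such a point $|g(x_*)|=2/\sqrt{1+(2x_*)^2}$ is decreasing in $|x_*|$, and then invokes ``computer-aided analysis'' to bound the second extremum by $-0.46$ --- essentially the critical-point computation you relegate to a cross-check at the end. Your three-region argument avoids the transcendental equation entirely: the outer regions are trivial, and on the middle region $2x\in(\pi,\tfrac{2}{0.46})\subset(\pi,\tfrac{3\pi}{2})$ the auxiliary function $p(y)=\sin y+0.23\,y$ is decreasing because $\cos y\le\cos(\tfrac{2}{0.46})\approx-0.356<-0.23$, so everything reduces to the right-endpoint value $p(\tfrac{2}{0.46})=\sin(\tfrac{2}{0.46})+1>0$, which is strict since $\tfrac{2}{0.46}=\tfrac{100}{23}\neq\tfrac{3\pi}{2}$. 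What your approach buys is a fully elementary and verifiable proof needing only one mild numerical check (that $\cos(100/23)<-0.23$, a comfortable margin), whereas the paper's proof leans on an unspecified numerical computation at the delicate step; what the paper's approach buys is the sharper information that the true minimum is $\approx-0.4345$. One small remark: your closing worry that one ``must pin down $\sin(4.348)$ to a couple of decimals'' undersells your own argument --- as you observe earlier, $p(\tfrac{2}{0.46})>0$ follows from $\sin(\tfrac{2}{0.46})>-1$ alone, so no fine estimate of that sine is actually required.
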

\begin{proof}
We have 
$g'(x)=0$ when $2x = \tan(2x)$.
Denote $x_{*}$ a critical point so that it satisfies $2x_* = \tan(2x_*)$.
Then 
\begin{equation}
|g(x_*)| = \left|\frac{ \sin (\arctan(2 x_*) ) }{x_*}\right|= \left|\frac{2}{ \sqrt{ (2x_*)^2 + 1}}\right|,
\end{equation}
where we used that
$\sin (\arctan(2 x) ) = \pm \frac{2x}{\sqrt{(2x)^2+1}}$. We hence know that the absolute value of $g(x)$ at a critical point is decreasing with $|x|$. We know $g(0)=2$ is the maximal value. With the computer-aided analysis (as there is no simple closed form), the second extreme value can be approximately lower bounded by $g(\cdot) \geq -0.46$.

\end{proof}

\textbf{Lemma~\ref{lem:broad}}
\textit{
Consider solving
\[
\textstyle
\min_{w \in \reals^d} F(w) := \min_{w \in \reals^d} f(w) + g(w).
\]
Suppose the function $F(w)$ satisfies assumption $\clubsuit$.
Denote $w_{*}$ a global minimizer of $F(w)$.
Assume that $f(w)$ is $2 \lambda_{*}$-strongly convex for some $\lambda_{*}>0$. If the average Hessian of the possibly non-convex $g(w)$ between $w$ and $w_{*}$ satisfies $\lambda_{{\min}}(H_g(w)) \geq - \lambda_{*}$. Then, $F(\cdot)$ satisfies \avg{\lambda_*}{w_*}.
}

\begin{proof}
The average Hessian of $F(w)$ between $w$ and $w_{*}$ is
\begin{equation}
\begin{split}
\int_{0}^{1} H_F( \theta w + (1-\theta ) w_*) d\theta
& = \int_0^1 H_f( \theta w + (1-\theta ) w_*) d\theta 
+ \int_0^1 H_g( \theta w + (1-\theta ) w_*) d\theta
\\ & \geq 2 \lambda_* +  \int_0^1 H_g( \theta w + (1-\theta ) w_*) d\theta
\\ & \geq \lambda_*.
\end{split}
\end{equation}

\end{proof}

\section{Example 3} \label{app:ex3}

\subsection{Proof of Lemma~\ref{lem:deep}} \label{app:lem:deep}

\textbf{Lemma~\ref{lem:deep}}
\textit{
($N=2$).
The function $f(u)$ in (\ref{deep}) is twice differentiable.
It is $L=3R^{2}$-smooth and has $L_{H}=6R$-Lipschitz Hessian
in the ball $\{ u: \|u\|_{\infty} \leq R\}$ for any finite $R>0$.
Furthermore, $f(u)$ is $\mu$-PL with $\mu = 2 \min_{i\in [d]} (u_i)^2$ 
at $u$, and the average Hessian $H_{f}(u)$ between $u$ and $\hat{u} \in \mathcal{U}$
satisfies $\lambda_{{\min}}( H_f(u) ) = \min_{i \in [d]} \left( (u_i)^{2} + u_i \hat{u}_i \right)$.
}

\begin{proof}

Recall the objective is
$f(u) = \frac{1}{ N^2} \| u^{\odot N} - w_* \|^2$.
The gradient is
$\nabla f(u) = \frac{2}{N} \left( u^{\odot N} - w_* \right) \odot u^{\odot N-1}$
and the Hessian is
$\nabla^2 f(u) = \text{Diag}\left( 2\frac{2N-1}{N}  u^{\odot 2 N -2} - 2\frac{N-1}{N} u^{\odot N-2} \odot w_* \right).$
We have
\begin{equation}
\begin{split}
\| \nabla f(u) \|^2  &= [ 
\frac{2}{N} \left( u^{\odot N} - w_* \right) \odot u^{\odot N-1}
 ]^\top  [ \frac{2}{N} \left( u^{\odot N} - w_* \right) \odot u^{\odot N-1}
 ] \\ & =  \frac{4}{N^2} \sum_{i=1}^d (u_{i})^{2(N-1)} (u_i^{N} - w_{*,i})^2
\\ &\geq  \frac{4}{N^2} \min_{i\in [d]} (u_{i})^{2(N-1)} \| u^{\odot N} - w_* \|^2_2
 = 4 \min_{i\in [d]} (u_{i})^{2(N-1)}  f(u).
\end{split}
\end{equation}
So it satisfies $\mu$-PL with $\mu = 2 \min_{i\in [d]} (u_{i})^{2(N-1)}$ at $u$.

Now let us consider the average Hessian for the case of $N=2$.
We have
$\nabla^2 f(u) = \text{Diag}( 3 u^{\odot 2} - w_* )$.
The set of global optimal points is
$\mathcal{U}:= \{ \hat{u} \in \reals^d:  \hat{u}_i = \pm \sqrt{w_{*,i}}  \}.$
So the average Hessian between $u$ and $\hat{u}$ is
\begin{equation}
\begin{split}
H_f(u) = \int_0^1 \nabla^2 f( \theta u + (1-\theta) \hat{u} ) d\theta
& =   \int_0^1 
\text{Diag}( 3 ( \theta u + (1-\theta) \hat{u} )^{\odot 2} - w_* ) d\theta
\\ &
=   \text{Diag}( u^{\odot 2} + u \hat{u} + w_* - w_*).
\end{split}
\end{equation}
It is now clear that the smallest eigenvalue of $H_f(u)$ is
 $  \min_{i\in [d]} \left( 
 (u_i)^{2} + u_i \hat{u}_i  \right) $.
 Hence, it satisfies \avg{\lambda_*}{\hat{u}} with the parameter
  $\lambda_* =   \min_{i\in [d]} \left( 
 (u_i)^{2} + u_i \hat{u}_i  \right) $ if $\lambda_*>0$.
 %

We have $\lambda_{\max}( \nabla^2 f(u) ) = \max_{{i \in [d]}} 3 u_i^{2} - w_{*,i}$
so that the gradient is $L=3R^2$-Lipschitz in the ball $\| u \|_{\infty} \leq R$.

Moreover, for $u,z$ in the ball $\| \cdot \|_{\infty} \leq R$, we have
\begin{equation}
\| \nabla^2 f(u) - \nabla^2 f(z) \| 
= \| \text{Diag}(3u^{\odot 2}) -  \text{Diag}(3z^{\odot 2})   \| 
\leq \sqrt{  \sum_{i=1}^d ( 3 u_i^2 - 3 z_i^2 )^2  }
\leq 3 \sqrt{  \sum_{i=1}^d ( | u_i + z_i | | u_i - z_i| )^2  }
\leq 6 R \| u - z \|.
\end{equation}
So the Hessian is $L_H= 6 R$-Lipschitz in the ball $\| \cdot \|_{\infty} \leq R$.
\end{proof}

\subsection{Proof of Lemma~\ref{lem:stay}} \label{app:lem:stay}

\textbf{Lemma~\ref{lem:stay}}
\textit{
Let the initial point $u_0 = \alpha \mathbf{1} \in \reals^{d}_{+}$, where $\alpha$ satisfies $0< \alpha < \min_{i \in [d]} \sqrt{ w_{{*,i}} }$. Denote $L:= ~6 \max_{i \in [d]} w_{*,i}$.
Let $\eta = \frac{c_{\eta}}{L }$, where $c_{{\eta}} \in (0,1]$, and $\beta_{t} \leq \sqrt{ (1-\frac{2c_{\eta}^2 \tilde{c} \alpha^2 }{L} )\left( 1- 4 c_{\mu} \frac{ 2 \alpha^2 }{L} \right) } $ for any $\tilde{c} \in (0,1]$ and $c_{{\mu}} \in (0,\frac{1}{4}]$.
Then, for minimizing \eqref{deep}, the iterate $u_{t}$ generated by HB satisfies $0 < \alpha < u_{{t,i}} <  \sqrt{2 w_{*,i} - \alpha^2 }$ for all $t$ and all $i \in [d]$.
}

\begin{proof}

We observe that the dynamic of HB for minimizing \eqref{deep} is
\begin{equation}
u_{t+1} = u_t \odot \left( 1 + \eta ( w_* - u_t^{\odot 2}  )  \right) + \beta  (u_t - u_{t-1}),
\end{equation}
where $\odot$ represents the element-wise product.
From the dynamic, we see that the dynamic of each dimension $i$ is the same as that of applying HB for minimizing $f_i(u_i):= \frac{1}{4} ( u_{i}^{2} - w_{*,i} )^{2}$.
Therefore, it suffices to fix an $i$ and consider the dynamic of HB for minimizing
\begin{equation}
f_i(u_i):= \frac{1}{4} ( u_{i}^{2} - w_{*,i} )^{2},
\end{equation}
where $f_i(\cdot): \reals \rightarrow \reals$.

Let us recall the Lyapunov function defined on \eqref{lyp}:
$\textstyle
V_{t,i} := f_i(u_{t,i}) - \min_{u_i} f_i(u_i) 
+ \theta ( u_{t,i} - u_{t-1,i} )^2,$
where we have
$f_i(u_i) = \frac{1}{4} ( u_i^{\odot 2} - w_{*,i} )^{2}$ and $\min_{u_i} f_i(u_i) = 0$.
We will let $L:= 6 \max_{i \in [d]} w_{*,i}$ because we will show that $u_{t,i} \leq  \sqrt{2 w_{*,i}}:=R$
and hence by Lemma~\ref{lem:deep}, the function $f(\cdot)$ is $3R^2 = 6 \max_{i \in [d]} w_{*,i}$-smooth for all $\{ u \in \reals^d : \| u \|_{\infty} = R \}$.

Observe that at the beginning, we have
\begin{equation}
V_{0,i} = f_i(u_{0,i} ) = \frac{1}{4} ( \alpha^{2} - w_{*,i} )^2
\end{equation}
since by initialization $u_{0,i} = u_{{-1,i}}$.

The proof is by induction. Initially $t=0$, by Lemma~\ref{lem:deep}, the function value at the iterate $u_{t,i}=\alpha >0$ satisfies the PL condition. 
Therefore, invoking Theorem~\ref{thm:PL}, we have
$V_{1,i} < V_{0,i} = \frac{1}{4} ( \alpha^{2} - w_{*,i} )^2$. 
Since $V_{1,i} < V_{0,i} = \frac{1}{4} ( \alpha^{2} - w_{*,i} )^2$, we know that $0 < \alpha < u_{{1,i}} <  \sqrt{2 w_{*,i} - \alpha^2 } $. In other words, based on Lemma~\ref{lem:deep}, the iterate
$u_{{1,i}}$ stays in a region where the smoothness, PL, and \ao hold.

Now assume that at iteration $t$, $u_{{t,i}}$ satisfies $0 < \alpha < u_{{t,i}} <  \sqrt{2 w_{*,i} - \alpha^2} $. Then, by invoking Theorem~\ref{thm:PL}, we have
$V_{{t+1,i}} \leq V_{{t,i}}$, and hence $V_{{t+1,i}} < V_{{0,i}}= \frac{1}{4} ( \alpha^{2} - w_{*,i} )^2$, which means that 
$0 < \alpha < u_{{t+1,i}} < \sqrt{2 w_{*,i} - \alpha^2} $. This completes the induction and shows that each $u_{t,i}$ of HB stays away from $0$ for all $t$ and all $i \in [d]$.

Finally, 
by Lemma~\ref{lem:deep}, we know the function at any point $u$ such that $u_{i} \geq \alpha, \forall i \in [d]$ satisfies $\mu= 2 \alpha^2$-PL.

\end{proof}
We remark that if initially $u_{0,i} = - \alpha < 0$, where $\alpha < \min_i \sqrt{ w_{{*,i}} }$,
then following the proof of Lemma~\ref{lem:stay}, one can show that $u_{t,i}$ generated by HB with the same condition of $\eta$ and $\beta$ satisfies $0 > \alpha > u_{{t,i}} > - \sqrt{2 w_{*,i} - \alpha^2 } $ for all $t$, which means that HB stays in a region where the smoothness, PL, and \ao hold according to Lemma~\ref{lem:deep}.

\clearpage

\section{\av{\lambda_*} implies $\lambda_*$-PL } \label{app:sinho}

The following is a proof of showing that \av{\lambda_*} implies the PL condition with the same constant $\lambda_*$.
\begin{align}
f(w_*) & = f(w) + \int_0^1 \langle \nabla f ( (1-\tau) w + \tau w_* ) , w_* - w \rangle d \tau \notag
\\ & =  f(w) + \langle \nabla f(w), w_* - w \rangle + 
\int_0^1 
\langle \nabla f ( (1-\tau) w + \tau w_* ) - \nabla f(w)  , w_* - w \rangle  d \tau \notag
\\ & =  f(w) + \langle \nabla f(w), w_* - w \rangle + 
\int_0^1
\left 
\langle
\left(
\int_0^1 \nabla^2 f\left( (1-\alpha) \left( (1-\tau) w + \tau w_* \right) + \alpha w \right)  d\alpha \right) 
\tau (w_* - w) 
 , w_* - w 
\right
 \rangle  d \tau \notag
\\ & \geq  f(w) + \langle \nabla f(w), w_* - w \rangle + 
\int_0^1
\left 
\langle
\lambda_* \tau (w_* - w) 
 , w_* - w 
\right
 \rangle  d \tau \label{sinho}
\\ & = 
f(w) + \langle \nabla f(w), w_* - w \rangle + 
\frac{\lambda_*}{2} \| w_* - w \|^2 \notag 
\\ &
\geq f(w) - \frac{1}{2\lambda_*} \| \nabla f(w) \|^2 \notag,
\end{align}
where \eqref{sinho} uses \av{\lambda_*} and the last ineqaulity is obtained by minimizing
$\langle \nabla f(w), w_* - w \rangle + 
\frac{\lambda_*}{2} \| w_* - w \|^2 $ over~$w$.

\clearpage

\section{Erratum} \label{erratum}

We would like to draw attention of the reader to an error in the proof of Lemma~\ref{vb} in the previous version of this work.

In Appendix~\ref{app:lem2} of the previous version, we wrote:
\begin{mdframed}
\textit{ \small
Proof: By Lemma~1, we have 
\[
\|\Psi_t \|_2 := \|D_tP_t^{-1}P_{t-1}\|_2 \leq \sqrt{\beta_t}\|P_t^{-1}P_{t-1}\|_{2},
\]
$P_{t} = \tilde{U}_t\tilde{P}_t{\rm Diag}(Q_{t,1},\dots,Q_{t,d})$,
where $\tilde{U_t}$ and $\tilde{P_t}$ are unitary matrices.
Since $\tilde{U_t}$ and $\tilde{P_t}$ are unitary matrices,
it does not affect the spectral norm $\| P_t^{-1}P_{t-1}\|_2$. 
So to bound $\| P_t^{-1}P_{t-1}\|_{2}$,
it suffices to analyze the spectrum of the matrix product
${\rm Diag}(Q_{t,1},\dots,Q_{t,d})^{-1} {\rm Diag}(Q_{t-1,1},\dots,Q_{t-1,d})$, which is block diagonal. The eigenvalues of $P_t^{-1}P_{t-1}$ 
are those of ${\rm Diag}(Q_{t,1},\dots,Q_{t,d})^{-1} {\rm Diag}(Q_{t-1,1},\dots,Q_{t-1,d})$.
}
\end{mdframed}

Denote $Q_t:={\rm Diag}(Q_{t,1},\dots,Q_{t,d}) $ and $Q_{t-1}:={\rm Diag}(Q_{t-1,1},\dots,Q_{t-1,d})$.
Then, $\| P_t^{-1}P_{t-1} \|_2 = \| Q_t^{-1} M_t Q_{t-1} \|_2$, where $M_t = (\tilde{P}_t)^{-1} (\tilde{U}_t)^{-1} \tilde{U}_{t-1}\tilde{P}_{t-1}$ is an orthogonal matrix.

However, for the statement that
\textit{...it suffices to analyze the spectrum of the matrix product
${\rm Diag}(Q_{t,1},\dots,Q_{t,d})^{-1} {\rm Diag}(Q_{t-1,1},\dots,Q_{t-1,d})$, ...} to be true, the following co-diagonalization condition needs to hold
\begin{equation} \label{co-diag}
\| Q_t^{-1} M_t Q_{t-1} \|_2  \leq \| Q_t^{-1} Q_{t-1} \|_2, \forall t, 
\end{equation}
which might not be guaranteed in general except for a couple of cases, as the orthogonal matrix $M_t$ is not on the side, it can affect the spectral norm.

\section{Examples when the co-diagonalization $\spadesuit$ hold}
\label{app:when}

An example when the \textit{co-diagonalization} condition $\spadesuit$ \eqref{co-diag} holds is when $M_t$ is the identity matrix, which is the case when the dimension of the optimization variable $w$ of $\min_w f(w)$ is one, i.e., $w \in \reals^d$ and $d=1$.
When the dimension $d=1$,
the matrix $A$ in HB dynamics 
\eqref{dynamic} 
becomes
\begin{equation}
A_t = \begin{bmatrix}
1 - \eta \lambda_t + \beta_t & - \beta_t    \\
1 & 0
\end{bmatrix}
\end{equation}
Observe that $A$ is already in the form of $\Sigma$ in 
\eqref{decompose1},
That is, $\tilde{U}_t\tilde{P}_t$ in the decomposition $A_t = \tilde{U}_t\tilde{P}_t \Sigma \tilde{P}^T_t\tilde{U}^\top_t$ in 
\eqref{decompose1}
is the identity matrix. Hence, Lemma~\ref{vb} holds when the dimension 
of the optimization variable $w$ is one.
More generally, the condition \eqref{co-diag} holds 
when the Hessian is diagonal (e.g., the diagonal network in Example~3).
The is because when the Hessian is diagonal, $\tilde{U}_t$ is the identity matrix, and the permutation matrix $\tilde{P}_t$ is the same, i.e., $\tilde{P}_t=\tilde{P}_{t-1}, \forall t$.

To summarize, Lemma~\ref{vb} requires 
the \textit{co-diagonalization} condition $\spadesuit$, which can be guaranteed in the aforementioned examples but might not hold in general.
Since our acceleration result of HB beyond quadratics in this paper \nocite{wang2022provable} (i.e., Theorem~\ref{thm:meta}) uses Lemma~\ref{vb}, it requires $\spadesuit$. Hence, while this work shows acceleration of HB beyond quadratics when the underlying dimension is $d=1$, identifying when HB has provable acceleration beyond quadratics in a high-dimensional space remains an open question (see e.g., \cite{goujaud2023provable} for some negative (non-convergence) results of HB when $d>1$,
regarding minimizing some particular smooth strongly convex functions).

\end{document}